\documentclass[a4paper,12pt,reqno]{amsart}
\usepackage{a4wide}

\usepackage{amsmath,amssymb,amsthm,mathtools}

\usepackage[english]{babel}
\usepackage[colorlinks,citecolor=green,linkcolor=red]{hyperref}
\usepackage{amsthm}
\usepackage{color}
\usepackage{mathrsfs}
\usepackage{amsmath}
\usepackage{mathtools}
\usepackage{amsfonts}
\usepackage{amssymb}
\usepackage{bm}
\usepackage{physics}
\usepackage{enumitem}
\usepackage{tikz-cd}
\usepackage{a4wide}
\usepackage{cleveref}
\usepackage{esint}
\usepackage{nicefrac}
\usepackage{yfonts}
\usepackage{stmaryrd}
\usepackage[margin=2cm]{geometry}
\usepackage{comment}
\usepackage{mathrsfs}
\usepackage{dutchcal}
\usepackage{xfrac}

\usepackage{listings}

\numberwithin{equation}{section}

\theoremstyle{plain}
\newtheorem{thm}{Theorem}[section]

\newtheorem{prop}[thm]{Proposition}

\newtheorem{lemma}[thm]{Lemma}
\newtheorem{theorem}[thm]{Theorem}

\theoremstyle{remark}
\newtheorem{remark}[thm]{Remark}

\newtheorem{ex}[thm]{Example}

\theoremstyle{definition}
\newtheorem{defn}[thm]{Definition}

\DeclareMathOperator{\Aut}{Aut}

\newcommand{\supp}{{\mathrm {supp\,}}}

\newcommand{\ZZ}{{\mathbb{Z}}}

\newcommand{\RR}{\mathbb{R}}

\newcommand{\NN}{\mathbb{N}}
\newcommand{\QQ}{\mathbb{Q}}

\newcommand{\Ext}{\operatorname{Ext}}

\def\XXint#1#2#3{{\setbox0=\hbox{$#1{#2#3}{\int}$}
		\vcenter{\hbox{$#2#3$}}\kern-.5\wd0}}

\newcommand{\defeq}{\vcentcolon=}

\newcommand{\GGN}{\sfrac{\Gamma}{\Gamma_N}}

\let\phi\varphi
\let\epsilon\varepsilon
\let\eps\varepsilon

\author{Camillo Brena}
\address{\parbox{\linewidth}{ETH Z\"urich.\\
	R\"amistrasse 101,
    8092 Z\"urich -- Switzerland\\[-4pt]\phantom{a}}}\email{camillo.brena@math.ethz.ch}
\author{Elia Bru\`{e}}
\address{\parbox{\linewidth}{Bocconi University, Department of Decision Sciences.\\
	Via Sarfatti 25,
	20136 Milano -- Italy\\[-4pt]\phantom{a}}}
\email{elia.brue@unibocconi.it}
\begin{document}
	\title[Ollivier--Ricci Curvature on Groups of Polynomial Growth]{Ollivier--Ricci Curvature on \\Groups of Polynomial Growth}

	\maketitle
\setcounter{tocdepth}{4}

\begin{abstract}
We study Ollivier--Ricci curvature on Cayley graphs of groups of polynomial growth.
Our main result shows that non-negative Ollivier--Ricci curvature forces the group to be virtually abelian. As an application, we prove that connected vertex-transitive graphs of polynomial growth and non-negative Ollivier--Ricci curvature are quasi-isometric to $\ZZ^k$, for some $k\in\NN$.
\end{abstract}

\tableofcontents

\section{Introduction}
Let $\Gamma$ be a finitely generated group and let $S\subseteq\Gamma$  be a fixed finite symmetric set of generators.  We consider the  Cayley graph of $\Gamma$ with respect to $S$, i.e., the undirected graph whose vertex set   is $\Gamma$ and such that $g\sim h$ if and only if $g^{-1}h\in S$.  In other words, the edges are $(g,gs)_{g\in\Gamma, s\in S}$.
Notice that to the Cayley graph is naturally associated a shortest-path distance $d$, which corresponds to the algebraic word-length function defined in \eqref{brgfdsvcx} below. Of course, $d$ depends on $S$, but we will not make this dependence explicit in this introduction to keep the notation as light as possible.
In this introduction, we will always assume $e\notin S$, which, in particular, implies $g\nsim g$. Without this convention, \eqref{aaavefdsc},  \eqref{asavefdscz} and \eqref{aaascd} below have to be corrected.  This assumption will be dropped when we set up the precise framework.

\subsection{Ollivier--Ricci curvature}\label{veadscsdc}

Towards the definition of Ollivier--Ricci curvature, we  define the $1/2$-lazy symmetric random walk originating from $g$ with respect to $S$:

\begin{equation}\label{aaavefdsc}
    X_g=
    \begin{cases}
        g\qquad&\text{with probability $\frac{1}2$}\,,\\
        k\qquad&\text{with probability $\frac{1}{2|S|}$, for $k\sim g$}\,.
    \end{cases}
\end{equation}
The following definition has been given by Ollivier, \cite{ollivier2007ricci,ollivier2009ricci,ollivier2010survey}.
\begin{defn}\label{ordefn} The Ollivier--Ricci curvature is defined as
    \begin{equation}
    \kappa(g,h)\defeq 1-\frac{\inf_{(X_g,X_h)}\mathbb{E} \big(d(X_g,X_h)\big)}{d(g,h)}\qquad\text{for $g\ne h$}\,.
\end{equation}
\end{defn}
In the definition above, the infimum is taken among all couplings of $X_g$ and $X_h$.   From the theory of optimal transport,  it is well known that the infimum above is attained, and the corresponding couplings are called optimal.

Notice that $\kappa$ depends heavily on $S$, as both the distance $d$ and the random variables $X$ depend on $S$. To keep notation in this introduction simple, we omit to write this dependence.

An equivalent perspective is the following. Denoting by $\mu_g$  the law of  $X_g$, namely \begin{equation}
    \mu_g(k)\defeq\label{asavefdscz}
    \begin{cases}
        \frac{1}2\qquad&\text{if $k=g$}\,,\\
        \frac{1}{2|S|}\qquad&\text{if $k\sim g$}\,,\\
        0\qquad&\text{otherwise}\,,
    \end{cases}
\end{equation}
 we have
\begin{equation}
    \kappa(g,h)= 1-\frac{W_1(\mu_g,\mu_h)}{d(g,h)}\qquad\text{for $g\ne h$}\,.
\end{equation}
As usual, $W_1$ is the $1$-Wasserstein distance with respect to $d$, which, for probability measures $\mu,\nu$, is defined as
\begin{equation}
    W_1(\mu,\nu)\defeq \inf_{\pi} \sum_{g,h\in\Gamma} d(g,h)\pi(g,h)\,,
\end{equation}
where the infimum is taken among all transport plans $\pi$ for $\mu,\nu$, i.e., those probability measures $\pi$ on $\Gamma\times\Gamma$  satisfying
\begin{equation}
    \sum_{h\in \Gamma}\pi(g,h)=\mu(g)\quad\text{and}\quad\sum_{h\in \Gamma}\pi(h,g)=\nu(g)\qquad\text{for every $g$}\,.
\end{equation}
\begin{defn}\label{ordef1}
    A  graph is said to have non-negative Ollivier--Ricci curvature  if 
\begin{equation}
    \kappa(g,h)\ge 0\qquad\text{for   $g\sim h$}\,.
\end{equation}
\end{defn}
Notice that
\begin{equation}
    \kappa(g,h)\ge 0\quad\text{if and only if}\quad W_1(\mu_g,\mu_h)\le 1\qquad\text{for  every $g\sim h$}\,,
\end{equation}
which follows immediately from the definition.

\begin{remark}
    Another notion of Ricci curvature for graphs has been defined by Lin--Lu--Yau in  \cite{lin2011ricci}. For $\alpha\in (0,1)$, they first defined $\kappa_\alpha$ by replacing, in the definition of $\kappa$,   the $1/2$-lazy symmetric random walk $X_g$ (or its law $\mu_g$)  by the $\alpha$-lazy symmetric random walk (or its law). Of course, $\kappa_{1/2}$ corresponds to Ollivier's notion.  Then, Lin--Lu--Yau's curvature is defined as  $\lim_{\alpha\uparrow 1}\kappa_\alpha/(1-\alpha)$ (the limit exists thanks to the concavity of $\alpha\mapsto\kappa_\alpha$, \cite{lin2011ricci}). It follows from \cite[Proposition 2]{loisel2014ricci} that having non-negative Ollivier--Ricci curvature and having non-negative Lin--Lu--Yau--Ricci curvature are in fact equivalent.
\end{remark}

Given the flexibility in the choice of the optimal plan, one may wonder whether every Cayley graph has non-negative Ollivier--Ricci curvature. Of course, this is not the case, as the following three examples show.
\begin{ex}\label{vefdscvc}
        Consider the (finite) dihedral group corresponding to the symmetries of the hexagon, $\Gamma\defeq \langle s,t\,|\, t^6=e, s^2=e, sts=t^{-1} \rangle$ and let $S\defeq\{s,st,st^3\}$, which is a finite symmetric set of generators. A tedious  computation shows that $\kappa(e,st)<0$. Hence, the Cayley graph of $\Gamma$ with respect to $S$  does not have non-negative Ollivier--Ricci curvature. 
    \end{ex}

\begin{ex}\label{ex:heisen}
    If $\Gamma\defeq\langle x_1,x_2\,|\,y\defeq [x_1,x_2]\text{ is central}\rangle=H_3(\ZZ)$ is the Heisenberg group, for every finite  symmetric set of generators $S$, the Cayley graph of $\Gamma$ with respect to $S$  does not have non-negative Ollivier--Ricci curvature.  This is an immediate consequence of our Theorem \ref{thm:main} below.
\end{ex}
\begin{ex}
    Let $\Gamma\defeq \langle a,b\rangle$, the free group on two generators, where $S\defeq\{a,a^{-1},b,b^{-1}\}$ is a finite symmetric set of generators. It is easy to realize that the Cayley graph of $\Gamma$ with respect to $S$  does not have non-negative Ollivier--Ricci curvature. Actually, from the main result of the forthcoming \cite{BHM}, for every finite  symmetric set of generators $S$, the Cayley graph of $\Gamma$ with respect to $S$  does not have non-negative Ollivier--Ricci curvature. 

\end{ex}
On the other hand, we also have plenty of Cayley graphs of non-negative Ricci curvature.
\begin{ex}\label{verdsc}Let $\Gamma$ be a finitely generated group and let $S$ be a finite symmetric set of generators. Assume one of the following. 
\begin{itemize} 
    \item  $\Gamma$ is abelian. 
    \item $S$  is  conjugation invariant, meaning that $tst^{-1}\in S$ for every $t,s\in S$.
    \item  $\Gamma$ is finite and $S=\Gamma\setminus\{e\}$, meaning that the Cayley graph of $\Gamma$ with respect to $S$ is a complete graph.
\end{itemize}
Then, it is easy to verify that the Cayley graph of $\Gamma$ with respect to $S$ has non-negative Ricci curvature.
\end{ex}
By this example, we  have an abundance of Cayley graphs with non-negative Ollivier--Ricci curvature. One may wonder whether the cases above are exhaustive.  The answer is negative, as the next example shows.
\begin{ex}\label{infdie}
Consider the infinite dihedral group  $\Gamma\defeq \langle s,t\,|\, s^2=e, sts=t^{-1}\rangle$, with $S\defeq \{s,t,t^{-1}\}$, which is a finite symmetric set of generators. Notice that any element of $\Gamma$ can be written either as $st^j$ or $t^j$, for $j\in\ZZ$.

A direct computation shows that the Cayley graph of $\Gamma$ with respect to $S$ has non-negative Ricci curvature.  Moreover, $\Gamma$ is infinite, virtually abelian ($\Gamma_N\defeq \langle t\rangle$ is cyclic and of finite index), but not abelian. Finally, $\Gamma$ does not possess any finite conjugation invariant set of generators. Indeed,  any set of generators must contain $st^j$ for some $j$, which satisfies $t^{k}st^j t^{-k}=st^{j-2k}$, and hence has infinite conjugacy class.
\end{ex}

Thus, no  classification of groups admitting Cayley graphs with non-negative Ollivier--Ricci curvature  can be extracted from Example \ref{verdsc}. In our first main result, stated below, we give the correct characterization of such groups for finitely generated virtually nilpotent groups.

\begin{theorem}\label{thm:main}
Let $\Gamma$ be a finitely generated virtually nilpotent group. Then, the following are equivalent
\begin{enumerate}
    \item $\Gamma$ is virtually abelian, 
    \item There exists a finite symmetric set of generators $S$ such that the Cayley graph of $\Gamma$ with respect to $S$  has non-negative Ollivier--Ricci curvature.
\end{enumerate}
\end{theorem}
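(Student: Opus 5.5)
Both directions need proofs. (1)$\Rightarrow$(2) is a construction; (2)$\Rightarrow$(1) is the real content.

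\emph{(1)$\Rightarrow$(2).} Let $\Gamma$ be virtually abelian. Replacing the abelian finite-index subgroup by its normal core, we may assume $A\trianglelefteq\Gamma$ is abelian (even free abelian, $A\cong\ZZ^k$) with finite quotient $F=\Gamma/A$. Fix a finite set $T\subseteq\Gamma$ of coset representatives and a finite symmetric generating set $B$ of $A$. Saturate $B$ under conjugation by $\Gamma$; this is still finite, because $\Gamma$ acts on $A$ through the finite group $F$. Set $S \defeq B\cup T\cup T^{-1}$, possibly enlarged further.

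To check $W_1(\mu_g,\mu_{gs})\le 1$, we build a coupling that matches $gb$ with $gs\,(s^{-1}bs)$ for $b\in B$. These pairs are at distance $d(gb,gsb')=d(e,b^{-1}sb')$ with $b'=s^{-1}bs\in B$, which equals $d(e,s)=1$. We then must handle the $T$-part. The cleanest way is to take $S$ to be the union of conjugation-invariant sets: $B$ (conjugation invariant in $\Gamma$) together with the full conjugacy classes of the representatives $T$. A conjugacy class $t^\Gamma$ is finite exactly when $t$ has finite-index centralizer, which need not hold (cf.\ Example~\ref{infdie}). So in general we do not expect a conjugation-invariant $S$, and we fall back on a direct coupling.

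The fallback coupling is as follows. Pair $gs'\mapsto gs\,\sigma(s')$ through a bijection $\sigma$ of $S$ with $d(e,s'^{-1}s\sigma(s'))\le1$, and pair the lazy masses $g\mapsto gs$. It suffices to find such a bijection of $S$, with most of the cost at most $1$, and to use the laziness mass to compensate. I expect to handle this by choosing $S$ as $B\cup (TB_0)$ for a large symmetric generating ball $B_0\subseteq A$ and exploiting commutativity in $A$: translations $a\mapsto a+b$ in $\ZZ^k$ give cost-$1$ matchings up to boundary errors of relative size $O(1/|B_0|)$, and the laziness $1/2$ absorbs those errors. This is the fiddly part, but it is a finite combinatorial verification.

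\emph{(2)$\Rightarrow$(1), the main obstacle.} Assume $\Gamma$ is virtually nilpotent with nonnegatively curved Cayley graph. The plan is to pass to asymptotic cones. By Pansu, the rescaled Cayley graphs $(\Gamma,d/n)$ converge to a Carnot group $G_\infty$ with a sub-Finsler metric, and $G_\infty$ is abelian iff $\Gamma$ is virtually abelian. Nonnegative Ollivier curvature gives, for all $g,h$ by the triangle inequality along geodesics, $W_1(\mu_g^{*m},\mu_h^{*m})\le d(g,h)$ for the iterated lazy walk. By the local CLT for groups of polynomial growth, $\mu_g^{*n^2}$ rescales to the heat kernel, i.e.\ the Gaussian measure of a sub-Laplacian on $G_\infty$. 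Passing to the limit, left-translated heat kernels $p_t(x,\cdot)$ are $1$-Lipschitz in $W_1$ with respect to the limit distance.

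The key contradiction step is then to show that on a non-abelian Carnot group such $W_1$-contraction of the heat semigroup fails. The plan is to use Kuwada's duality: contraction is equivalent to $\mathrm{Lip}(P_tf)\le\mathrm{Lip}(f)$. Test this on functions depending on the vertical (second-layer) coordinate, via the Heisenberg quotient $G_\infty\to H_3$ when the step is $\ge2$. It is known that on $H_3$ with sub-Riemannian or sub-Finsler structure the heat semigroup does not satisfy the $L^\infty$ gradient bound with constant $1$ (Driver--Melcher type, the constant exceeds $1$). The hardest part is making the Finsler norm and the limit passage rigorous; a uniform Lipschitz bound for $P_t$ survives limits and suffices.
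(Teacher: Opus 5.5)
Both directions have genuine gaps.

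\textbf{(1)$\Rightarrow$(2): the proposed mechanism does not exist, and the construction is not carried out.} In your pairing (lazy mass $g\mapsto gs$ at cost exactly $1$, plus a bijection $\sigma$ of $S$), the total cost is $\frac12+\frac{1}{2|S|}\sum_{t\in S}d(t,s\sigma(t))$. So laziness gives no slack. Any excess of $\sum_t d(t,s\sigma(t))$ over $|S|$ must be paid for by pairs of cost $0$; letting the mass of $\mu_g$ at $gs$ and of $\mu_{gs}$ at $g$ stay put buys only an extra $2$.

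The defects are also not few. Translating a ball $B_0\subseteq\ZZ^k$ by a bounded vector leaves $\asymp|B_0|^{(k-1)/k}$ points unmatched, not $O(1)$. Worse, take $s=\tau b_s\in TB_0$ with $b_s$ near $\partial B_0$. The natural correspondence between $\tau''B_0$ and $s\tau'B_0$ is then a translation by a conjugate of $b_s$ plus a cocycle term coming from $T$. That displacement can leave $B_0$, and then it costs $2$ on an entire coset. This is not a finite check.

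The missing idea (Theorem~\ref{thm:virtabel1:rest}) is the opposite choice. Keep the coset part of $S$ fixed and tiny: two elements $U_b\subseteq\pi^{-1}(b)$ for each $b\neq e$, with $U_b^{-1}=U_{b^{-1}}$ and fixed bijections $\theta_{b_1,b_2}:U_{b_1}\to U_{b_2}$. Enlarge only the conjugation-invariant abelian part $B^D\subseteq\Gamma_N$.
\begin{itemize}
\item For $s_0\in B^D$, left multiplication by $s_0$ moves every point by exactly $1$, by conjugation invariance.
\item For $s_0\in U_b=\{s_0,\bar s_0\}$, use the map $e\mapsto s_0$, $t\mapsto ts_0$ on $B^D\setminus\{e\}$, $s_0\mapsto s_0\bar s_0^{-1}$, $\bar s_0\mapsto e$, and $t\mapsto s_0\theta_{c,b^{-1}c}(t)$ on $U_c$ for $c\notin\{e,b\}$.
\end{itemize}
This map pushes $\mu_e$ onto $\mu_{s_0}$ with every displacement at most $1$. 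The reason is that the finitely many elements $t^{-1}s_0\theta_{c,b^{-1}c}(t)\in\Gamma_N$ lie in $B^D$ once $D$ is large.

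\textbf{(2)$\Rightarrow$(1): the decisive step is asserted, and the cited tools do not give it.} The blow-down route is a legitimate alternative; the paper sketches it in Section~\ref{sect:noemandb} and deliberately avoids it. But Driver--Melcher and H.-Q.~Li bound the gradient of a sub-Laplacian in its \emph{own} sub-Riemannian norm. Here the limit distance is sub-Finsler, with unit ball the polytope $E$ determined by $S$, while the diffusion's quadratic form comes from $\mu$. Nothing matches.

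Moreover, a non-abelian Carnot group need not have $H_3$ as a quotient; $H_5$ has none. And a function of the second-layer coordinate alone is Lipschitz only if it is constant. A horizontal step of length $\eps$ at a point whose first-layer coordinate has size $1/\eps$ moves the second-layer coordinate by an amount of order one.

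What is needed is the paper's mechanism:
\begin{itemize}
\item an extreme point $v_0$ of $E$ with $[v_0,\cdot\,]_{\Gamma_N}\neq0$ (Lemma~\ref{dscc});
\item a functional $\ell$ with $\ell(v_0)=1$ and $|\ell|<1-\eps_0$ on the other vertices;
\item the potential $\ell\circ\pi_1+\frac{\eta}{R^2}(\alpha\circ\pi_1)\beta$, with $\alpha=\beta([v_0,\cdot\,]_{\Gamma_N})$.
\end{itemize}
The slack $\eps_0$ absorbs the correction in the non-exposed directions, and the correction is invariant under steps along $v_0$. Consider the averaged increment of the potential under $g\mapsto u_0^ng$, divided by $|u_0^n|_\omega=l_0n$. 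It equals $1+\frac{\eta}{(2Hn)^2}\sum_g\alpha^2(\pi_1(g))\nu^{(n^2)}(g)\ge 1+\frac{\eta}{CH^2}$, up to Gaussian tail errors. In the continuum, the same choice of $v_0$ is what makes the rigidity argument work.

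The paper runs this directly on $\Gamma$: it reduces to step two via Lemma~\ref{dfcsc}, uses the auxiliary distance $d_\omega$ of Subsection~\ref{subsec:auxiliary_distance}, and controls tails with Gaussian bounds. So it needs neither a CLT for virtually nilpotent walks nor the passage of $W_1$ to the limit, both of which your route still owes.
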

We recall that $\Gamma$ is virtually nilpotent  (resp.\ virtually abelian) if there exists a  nilpotent (resp.\ abelian) normal subgroup $\Gamma_N\unlhd\Gamma$ of finite index, i.e.,   $\big|\GGN\big|<\infty$. The class of virtually nilpotent groups is relevant and well motivated in this setting.  Indeed, the celebrated Gromov's Theorem \cite{MR623534} states that a finitely generated group $\Gamma$ is virtually nilpotent if and only if it has polynomial growth. Polynomial growth means that some (hence all) Cayley graph of the group has polynomial growth as a metric space.  

Moreover, in the forthcoming \cite{BHM}, it is  proved that groups admitting a Cayley graph of non-negative Ollivier--Ricci curvature have polynomial growth. This not only makes the setting natural, but implies that the virtually nilpotent assumption in Theorem \ref{thm:main} can in fact be dropped, thus yielding a complete characterization of finitely generated groups admitting Cayley graphs of non-negative Ollivier--Ricci curvature.

\subsection{Ollivier--Ricci curvature at large scales}\label{largescales}
A somewhat unpleasant feature of the Ollivier--Ricci curvature is that it depends on the set of generators. In particular, a group may have, at the same time, Cayley graphs  with and without non-negative Ollivier--Ricci curvature. This happens for instance for the finite dihedral group  of Example \ref{vefdscvc} (which is of course virtually abelian), recall also Example \ref{verdsc}. 
On the other hand, we have seen in Theorem \ref{thm:main} that those finitely generated virtually nilpotent groups admitting a Cayley graph with non-negative Ollivier--Ricci curvature are precisely the virtually abelian groups. It is then natural to ask whether virtually abelian groups can be actually recognized by looking at some notion of curvature, in a way that does not depend on the chosen set of generators.
In other words, the question is whether there exists some quantity  (related to Ollivier--Ricci curvature) which is suitable to tell apart the purely virtually nilpotent case and the virtually abelian case, independently of the set of generators.  As we will argue below, it turns out that the correct notion is that of Ollivier--Ricci curvature at large scales, the intuition being that, asymptotically, the effect played by the choice of the generators should play a smaller role.

Another indication that this notion is more robust than plain Ollivier--Ricci curvature is the role that it has in the proof of Theorem \ref{thm:verttrans}, where we are going to need the characterization of finitely generated  groups of polynomial growth with non-negative Ollivier--Ricci curvature at large scales (but not necessarily with non-negative Ollivier--Ricci curvature). For details, see that discussion below Theorem \ref{thm:verttrans} and its proof.

\medskip

We  introduce  curvature at large scales. First, we denote by $X^n_g$  the $n$-step $1/2$-lazy symmetric random walk originating from $g$ with respect to $S$, i.e., 
\begin{equation}\label{aaascd}
    X^{n+1}_g=
    \begin{cases}
        X^n_g\qquad&\text{with probability $\frac{1}2$}\,,\\
        k\qquad&\text{with probability $\frac{1}{2|S|}$, for $k\sim X^n_g$}\,,
    \end{cases}
\end{equation}
with independent increments. It is easy to verify that the law of $X^n_g$ is $\mu^{\ast n}_g$, which is the left translation by $g$ of the $n$-times convolution of $\mu$, see Section \ref{sect:mk} for the precise definitions. 
\begin{defn}
    The Ollivier--Ricci curvature at large scales is defined as 
\begin{equation}
    \kappa_n(g,h)\defeq 1-\frac{\inf_{(X^n_g,X^n_h)}\mathbb{E} \big(d(X^n_g,X^n_h)\big)}{d(g,h)}\qquad\text{for $g\ne h$ and $n\in\NN$}\,.
\end{equation}
\end{defn}
In the equation above, the infimum is taken among all couplings of $X^n_g$ and $X^n_h$. As for $\kappa$, we remark that the infimum is attained, and the corresponding couplings are called optimal.
Equivalently,
\begin{equation}
    \kappa_n(g,h)\defeq 1-\frac{W_1(\mu^{\ast n}_g,\mu^{\ast n}_h)}{d(g,h)}\qquad\text{for $g\ne h$ and $n\in\NN$}\,.
\end{equation}

\begin{remark}\label{rem:vfdsc}
    Assume that $\Gamma$ has non-negative Ollivier--Ricci curvature. Then, $\Gamma$ has non-negative Ollivier--Ricci curvature  at large scales. More precisely,
    \begin{equation}\label{vedasdsc}
       \kappa_n(g,h)\ge 0\qquad\text{for every $g\ne h$ and $n\in\NN$}\,.
    \end{equation}

Indeed, \eqref{vedasdsc} is equivalent to 
\begin{equation}\label{vedasdsc1}
    \inf_{(X^n_g,X^n_h)}\mathbb{E} \big(d(X^n_g,X^n_h)\big)\le d(g,h)\qquad\text{for every $g\ne h$ and $n\in\NN$}\,.
\end{equation}
By the non-negativity of the Ollivier--Ricci curvature and the triangle inequality,  we see that \eqref{vedasdsc1} holds for $n=1$. 
Assume now that \eqref{vedasdsc1} holds for $1,\dots,n$. For $g,h\in \Gamma$, we  construct a coupling  $(X^{n+1}_g,X^{n+1}_h)$ as follows. We first consider an optimal coupling  $(X^{n}_g,X^{n}_h)$. Then, conditionally on $(X^{n}_g,X^{n}_h)=(g',h')$, we sample according to an optimal coupling for $(X_{g'},X_{h'})$, that is, 
\begin{equation}
    \mathbb{P}\big(X^{n+1}_g=g'',X^{n+1}_h=h'' |X^{n}_g=g',X^{n}_h=h'\big)=\mathbb{P}(X_{g'}=g'', X_{h'}=h'')\,.
\end{equation}
Hence, using the optimality of the couplings and the inductive assumption,
\begin{equation}
   \mathbb E \big(d(X^{n+1}_g,X^{n+1}_h) \big)=\mathbb E\big(\mathbb E\big(d(X^{n+1}_g,X^{n+1}_h)|(X_g^n,X_h^n)\big)\big)
   \le \mathbb E \big(d(X_g^n,X_h^n)\big) \le d(g,h)\,,
\end{equation}
so that \eqref{vedasdsc1} holds for $n+1$ as well. A similar argument can be carried out in the language of the laws $\mu^n$, where the pairing of the couplings reads as disintegration of optimal transport plans.
\end{remark}

We now state our main results in this direction, which give an exhaustive answer to the question introduced at the beginning of this subsection. Indeed, we are going to show that, for $\Gamma$  a finitely generated virtually nilpotent group with any finite symmetric set of generators $S$, the following happens. Either the Ollivier--Ricci curvature at large scales stays uniformly strictly negative along some directions, \eqref{vefdcs}, or it converges to $0$ in a very strong sense, \eqref{bbbcdscscd}. The first case happens in the purely virtually nilpotent case, the second in the virtually abelian regime. This is the outcome of the following two theorems.

\begin{theorem}\label{vecdscs}
Let $\Gamma$ be a finitely generated virtually nilpotent group and consider a finite symmetric set of generators $S$. Assume that $\Gamma$  is  not virtually abelian.  Then, there exists a sequence $(g_n)_n\subseteq\Gamma$ with $        \lim_{n\rightarrow\infty}\frac{d(e,g_n)}{n}\in (0,\infty)$ such that
    \begin{equation}\label{vefdcs}
        \limsup_{n\rightarrow\infty}\kappa_{n^2}(e,g_n)<0\,.
    \end{equation}
\end{theorem}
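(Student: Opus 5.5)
The idea is to use the scaling limit of the random walk on $\Gamma$. After $n^2$ steps, the law $\mu^{\ast n^2}$ rescaled by $1/n$ converges to a heat kernel measure on the asymptotic cone $G_\infty$ (Pansu). This is a Carnot group equipped with a sub-Finsler metric $d_\infty$ and a left-invariant sub-Laplacian. Precisely, we first pass to a torsion-free nilpotent normal subgroup $\Gamma_N$ of finite index. Projecting the lazy walk gives a centred walk on $\Gamma_N$ modulo the finite quotient, and the classical local/central limit theorems for random walks on nilpotent groups (Alexopoulos, Crépel--Raugi, Ishiwata) say the following. For $g_n$ with $\delta_{1/n}(g_n)\to x\in G_\infty$, the measures $(\delta_{1/n})_\#\mu^{\ast n^2}_{g_n}$ converge weakly to $x\cdot p_1$, where $p_1$ is the time-one heat kernel measure of a left-invariant hypoelliptic diffusion on $G_\infty$. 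Since $d/n\to d_\infty$ uniformly on the relevant scales (Pansu) and the laws have uniformly bounded first moments at scale $n$ (Gaussian tail bounds), $W_1$ passes to the limit:
\begin{equation*}
\lim_{n\to\infty}\frac{W_1(\mu^{\ast n^2}_e,\mu^{\ast n^2}_{g_n})}{n}=W_1^{d_\infty}(p_1,x\cdot p_1)\,,\qquad \lim_{n\to\infty}\frac{d(e,g_n)}{n}=d_\infty(0,x)\,.
\end{equation*}
This reduces \eqref{vefdcs} to finding $x\in G_\infty$, $x\neq 0$, with $W_1^{d_\infty}(p_1,x p_1)>d_\infty(0,x)$. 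We also need $x$ to be realized by some sequence $g_n$, which is automatic because $\Gamma_N$ rescaled is dense in $G_\infty$.

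Since $\Gamma$ is not virtually abelian, $\Gamma_N$ is a non-abelian nilpotent group. Hence $G_\infty$ has step at least $2$, so there is a nontrivial horizontal direction $X$ and another $Y$ with $[X,Y]\ne0$. I would take $x=\exp(tX)$ horizontal and use Kantorovich duality. The goal is a $1$-Lipschitz function $f$ for $d_\infty$ with $\int f\,d(xp_1)-\int f\,dp_1>d_\infty(0,x)$. Note that left translation by $x$ is not an isometry for the right-moving structure: $d_\infty(y,xy)=d_\infty(0,y^{-1}xy)$, and the conjugate $y^{-1}xy=x\cdot\exp(\text{commutator terms})$ is strictly longer than $x$ whenever $y$ has a component in $Y$. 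So the naive coupling $y\mapsto xy$ has cost $\int d_\infty(0,y^{-1}xy)\,dp_1(y)>d_\infty(0,x)$. This is the mechanism, but we must rule out every coupling, not just this one.

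The main obstacle is the lower bound on $W_1$. My first attempt is a first-order expansion in $t\to0$. Write $W_1(p_1,\exp(tX)p_1)=t\,\sup_f\int (\tilde X f)\,dp_1+o(t)$, where $\tilde X$ is the right-invariant field generating left translation and the sup runs over $1$-Lipschitz $f$. Then $\tilde X$ at $y$ equals $\mathrm{Ad}_{y^{-1}}X=X+[X,\cdot]+\dots$, which is not horizontal. For Lipschitz $f$ one has $|Xf|\le \|X\|$, but $\tilde X f$ can be larger along vertical directions. We need $f$ with $\int \tilde Xf\,dp_1>\|X\|=d_\infty(0,\exp X)$. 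I would take $f(y)=d_\infty(y, E)$-type functions, or more concretely the Busemann-type function $f=\langle\pi(\cdot),X^*\rangle$ corrected by a vertical term supported where $p_1$ has mass. Alternatively I would argue by contradiction using the full scaling: $W_1\le d$ at every scale and every direction would make $p_t$ satisfy an $L^\infty$-type gradient bound $|\nabla P_tf|\le P_t|\nabla f|$ (Kuwada duality). Driver--Melcher showed this bound fails on the Heisenberg group and on any non-abelian step-two quotient, and $G_\infty$ surjects onto such a quotient, with $d_\infty$ comparable. This yields a fixed $x$ with strict inequality. Then $\limsup\kappa_{n^2}(e,g_n)=1-W_1^{d_\infty}(p_1,xp_1)/d_\infty(0,x)<0$.
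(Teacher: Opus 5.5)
Your reduction to the blow-down is a legitimate, if heavy, alternative to the paper's discrete argument; the paper sketches it in Section~\ref{sect:noemandb}. You only need its easy half: testing against a fixed bounded $d_\infty$-Lipschitz function gives $\liminf_n n^{-1}W_1(\mu^{\ast n^2}_e,\mu^{\ast n^2}_{g_n})\ge W_1^{d_\infty}(p_1,xp_1)$. This assumes a CLT on the $\Gamma_N$-covering and Pansu--Breuillard with compatible identifications.

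\textbf{The gap.} You never prove the one inequality that carries all the content, namely $W_1^{d_\infty}(p_1,xp_1)>d_\infty(e,x)$.

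Your first route stops exactly where the work begins (``$X^*$ corrected by a vertical term''). As set up, it cannot work for an arbitrary horizontal $X$ with $[X,\cdot\,]\neq 0$. The limit metric is sub-Finsler with unit ball the polytope $E$. For $X^*+\epsilon h$ to be $1$-Lipschitz, the horizontal derivative of $h$ must be absorbed by the slack $1-|X^*(v)|$ at every vertex $v$ of $E$. If $X/|X|_E$ lies in the relative interior of a face, then $X^*\equiv 1$ on that face and there is no slack.

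Even your heuristic that $y^{-1}xy$ is strictly longer than $x$ fails in that case. On $H_3(\RR)$ with $E=[-1,1]^2$ and $X=X_1$, every $\exp(tX_1)\exp(cY)$ with $|c|\le t^2/4$ has length exactly $t$. So the conjugates are no longer than $x$ when $|b(y)|\le t/4$.

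The paper supplies the ingredients you are missing:
\begin{itemize}
\item a vertex $v_0\in\Ext(E)$ with $[v_0,\cdot\,]_{\Gamma_N}\neq0$ (Lemma~\ref{dscc}, which uses that the vertices span);
\item a functional $\ell$ with $\ell(v_0)=1$ and $|\ell|<1-\eps_0$ on the other vertices;
\item the specific correction $\frac{\eta}{R^2}\alpha(\pi_1(g))\beta(g)$ with $\alpha=\beta([v_0,\cdot\,]_{\Gamma_N})$, written in Malcev coordinates with $v_0$ last;
\item the cut-off at $R\sim Hn$, controlled by Lemma~\ref{vfedsc} and the moment bounds of Lemma~\ref{scacs}.
\end{itemize}
The point of the third item is that the correction is \emph{exactly} invariant under right multiplication by $u_0$, the one direction where there is no slack. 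Left multiplication by $u_0^n$, on the other hand, produces the gain $\frac{\eta}{R^2}nl_0\alpha^2$. With $X=v_0$ and this potential, your route becomes the continuum version of the paper's proof; without them it remains a heuristic.

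\textbf{Your fallback route is not correct.} By Kuwada's duality, $W_1(\delta_x\ast p_t,\delta_y\ast p_t)\le d(x,y)$ is equivalent to the \emph{weakest} gradient bound, $\mathrm{Lip}(P_tf)\le\mathrm{Lip}(f)$. The pointwise bound $|\nabla P_tf|\le P_t|\nabla f|$ that you wrote is dual to $W_\infty$-contraction instead.

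What is known to fail on the sub-Riemannian Heisenberg group (Driver--Melcher; failure of the Bakry--\'Emery condition) is $|\nabla P_tf|\le (P_t|\nabla f|^2)^{1/2}$. The bounds $|\nabla P_tf|\le K(P_t|\nabla f|^q)^{1/q}$ get weaker as $q$ grows. So this failure does not refute the $q=\infty$ bound, which is the one you would need to contradict.

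There are two further mismatches. In those results the diffusion and the metric come from the same Euclidean structure, whereas here $p_t$ is governed by the covariance of $\mu$ and $d_\infty$ by the polytope $E$. And a sharp constant-$1$ inequality does not transfer to a metric that is merely comparable. The strict inequality therefore still requires an explicit construction such as the paper's.
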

We remark that Theorem \ref{vecdscs} gives a complete answer to \cite[Problem C]{ollivier2010survey}. Moreover, we decided to state this theorem with elements $g_n$ with $d(e,g_n)\simeq n$ and time-steps $n^2$ to respect the natural parabolic scaling. 

\begin{theorem}\label{cdscs}
     Let $\Gamma$ be a finitely generated virtually abelian group and consider a finite symmetric set of generators $S$. Then
     \begin{equation}\label{bbbcdscscd}
         |\kappa_n(e,g)|\le \frac{C}{d(e,g)}\qquad\text{for every $g\ne e$ and $n\in\NN$}\,,
     \end{equation}
     where $C$ depends only on $\Gamma$ and $S$.
\end{theorem}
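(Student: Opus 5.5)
The plan is to reduce everything to the abelian picture. By definition of $\kappa_n$, the claim \eqref{bbbcdscscd} is equivalent to
\[
\bigl|W_1(\mu^{\ast n}_e,\mu^{\ast n}_g)-d(e,g)\bigr|\le C \qquad\text{for all } g\ne e,\ n\in\NN ,
\]
and I would prove the upper and lower bounds separately. Both rest on one structural fact: the word metric of a virtually abelian group is, up to a bounded additive error, a norm on $\ZZ^k$ that is invariant under the finite holonomy.

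\emph{Step 1: rough isometry with a normed lattice.} Pass to a finite-index normal subgroup $\Gamma_N\cong\ZZ^k$; after replacing $\Gamma_N$ by a smaller finite-index subgroup we may take it free abelian. Fix a finite transversal $R$, so that every $a\in\Gamma$ is written uniquely as $a=n_a r_a$ with $n_a\in\Gamma_N$ and $r_a\in R$. Conjugation by elements of $\Gamma$ induces a linear action $\rho$ of the finite group $\GGN$ on $\ZZ^k$. Let $\|\cdot\|$ be the stable norm $\|x\|=\lim_m d(e,x^m)/m$ on $\Gamma_N$, extended to $\RR^k$. Since $d(e,yx^my^{-1})=d(e,x^m)+O(1)$, this norm is $\rho$-invariant. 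By Burago's theorem (for $\ZZ^k$ acting cocompactly, or for periodic metrics) we have $d(e,x)=\|x\|+O(1)$ on $\Gamma_N$. Writing $a^{-1}b=\bigl(r_a^{-1}(n_a^{-1}n_b)r_a\bigr)r_a^{-1}r_b$ and using $\rho$-invariance, I get
\[
d(a,b)=\|n_b-n_a\|+O(1),\qquad\text{and in particular}\qquad d(e,y^{-1}gy)=d(e,g)+O(1)\ \text{ for all } g,y .
\]

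\emph{Step 2: upper bound.} Let $Y$ have law $\mu^{\ast n}$. Use the ``same increments'' coupling $(gY,Y)$, whose marginals are $\mu^{\ast n}_g$ and $\mu^{\ast n}_e$. Then $d(gY,Y)=d(e,Y^{-1}gY)\le d(e,g)+C$ pointwise by Step 1, so $W_1(\mu^{\ast n}_e,\mu^{\ast n}_g)\le d(e,g)+C$.

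\emph{Step 3: lower bound.} I would use Kantorovich duality with the test function $f(a)=\ell(n_a)$, where $\ell$ is a linear functional of dual norm $1$ with $\ell(n_g)=\|n_g\|$. By Step 1, $f$ is $1$-Lipschitz up to an additive constant, and a standard truncation/infimal-convolution argument turns this into
\[
W_1\ \ge\ \mathbb E f(gY)-\mathbb E f(Y)-C .
\]
Next, $gY=n_g\,(r_g n_Y r_g^{-1})\,r_g r_Y$, so $n_{gY}=n_g+\rho(r_g)n_Y+O(1)$. It therefore suffices to show that $\mathbb E[n_{Y}]$ stays bounded uniformly in $n$, because then both $\mathbb E\,\ell(\rho(r_g)n_Y)$ and $\mathbb E\,\ell(n_Y)$ are $O(1)$.

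\emph{Step 4, the main obstacle: $\sup_n|\mathbb E\, n_{Y_n}|<\infty$.} The coset process $r_{Y_n}$ is a finite Markov chain on $R$, and the increments of $n_{Y_n}$ depend only on the current coset. Solving a Poisson equation on this finite chain (with the laziness giving aperiodicity) writes $n_{Y_n}=nv+M_n+h(r_{Y_n})+O(1)$, where $M_n$ is a martingale, $h$ is bounded and $v\in\RR^k$ is the stationary drift. Then $\mathbb E\,n_{Y_n}=nv+O(1)$. To conclude I need $v=0$. I would get this from the diffusive bound $\mathbb E\, d(e,Y_n)\le C\sqrt n$, valid for symmetric walks on groups of polynomial growth (it also follows here from the martingale decomposition plus symmetry $Y\sim Y^{-1}$): a nonzero drift would force $\mathbb E\,d(e,Y_n)\gtrsim \|v\| n$ by Step 1, a contradiction.

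Once Step 4 is in place, Step 3 gives $W_1\ge d(e,g)-C$, and together with Step 2 this yields \eqref{bbbcdscscd}. The delicate points are the Poisson-equation bookkeeping in Step 4 and checking that all the $O(1)$ errors are uniform in both $g$ and $n$.
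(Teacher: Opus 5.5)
Your Step 2 does not give the upper bound, because the ``in particular'' at the end of Step 1 is false for $g\notin\Gamma_N$. Take $d(a,b)=\|n_b-n_a\|+O(1)$ with $a=y$, $b=gy$, and combine it with the affine rule $n_{gy}=n_g+\rho(r_g)n_y+O(1)$ that you yourself use correctly in Step 3. This gives
\[
d(y,gy)=\bigl\|n_g+(\rho(r_g)-I)\,n_y\bigr\|+O(1).
\]
That is $d(e,g)+O(1)$ uniformly in $y$ only when $\rho(r_g)=I$ (e.g.\ $g\in\Gamma_N$). Otherwise it grows linearly in $n_y$.

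Concretely, take $\ZZ\rtimes C_2$ with $S=\{(\pm1,0),(0,1)\}$. There one has $|(a,\epsilon)^{-1}(m,1)(a,\epsilon)|_S=1+|m-2a|$. For the generator $g=(0,1)$, where $d(e,g)=1$, your same-increments coupling therefore costs $1+2\,\mathbb E|a(Y_n)|\asymp\sqrt n$. For $g=(m,1)$ at time $m^2$ it costs at least $1+(1+\delta)m$, while $d(e,g)=m+1$; this is exactly Remark~\ref{rempianofurbo}. So $W_1(\mu^{\ast n}_e,\mu^{\ast n}_g)\le d(e,g)+C$ is not established for $g\notin\Gamma_N$, and the translation coupling cannot establish it.

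The missing idea is a bound $W_1(\mu^{\ast n}_e,\mu^{\ast n}_{s_0})\le C$, uniform in $n$, for $s_0$ in a fixed finite set (e.g.\ your transversal $R$). Granting it, the triangle inequality and left invariance give $W_1(\mu^{\ast n}_e,\mu^{\ast n}_g)\le W_1(\mu^{\ast n}_e,\mu^{\ast n}_{n_g})+W_1(\mu^{\ast n}_e,\mu^{\ast n}_{r_g})$. Your translation coupling does handle the first term, since for $n_g$ in the abelian group $\Gamma_N$ one has $Y^{-1}n_gY=r_Y^{-1}n_gr_Y$.

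The paper proves the uniform bound with a coalescing coupling:
\begin{enumerate}
\item Replace $\mu$ by $\mu^{\ast p}$ so that the projected step law charges every coset. This is harmless, since $W_1(\mu^{\ast n}_e,\mu^{\ast(n+1)}_e)\le C$.
\item Run the two walks with independent increments until the first time $\tau$ at which their images in $\Gamma/\Gamma_N$ agree; a geometric bound gives $\mathbb E\tau\le 1/\alpha$.
\item Use identical increments afterwards. The discrepancy is then a conjugate of $(\xi_1\cdots\xi_\tau)^{-1}s_0\eta_1\cdots\eta_\tau\in\Gamma_N$. Because $\Gamma_N$ is abelian, this conjugation reduces to conjugation by a bounded element, so the length is $O(1+\tau)$ and $W_1\le C(1+\mathbb E\tau)$.
\end{enumerate}

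Your lower bound (Steps 3--4) appears sound and is a genuinely different route from the paper's. Once the coupling lemma is available, though, the paper's route is simpler. It reduces to $g_N\in\Gamma_N$, where $n_{hY}=h+n_Y$ holds exactly, so testing against a linear functional returns exactly $\ell(h)$. No drift or Poisson-equation analysis is needed.
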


\subsection{Transitive graphs}
Let $G=(V,E)$ be an undirected graph. Here, $V$ is the set of vertices and $E$ is the set of edges. For $x,y\in V$, we  write $x\sim y$ if $x$ and $y$ are connected by an edge of $E$. We assume throughout that graphs are connected and have bounded degree.

A bijection  $\phi:V\rightarrow V$ is called a graph automorphism, and we write $\phi\in\Aut(G)$, if 
\begin{equation}
    \phi(x)\sim\phi(y)\text{ if and only if }x\sim y\qquad\text{for every $x,y\in V$}\,.
\end{equation}
Notice that  to $G=(V,E)$  is naturally associated a shortest-path distance $d$ and that graph automorphisms are precisely the isometries.
We say that a graph is vertex-transitive, or simply transitive, if the automorphism group acts transitively on the vertex set. Of course, Cayley graphs are transitive graphs (multiplications by elements of the groups are automorphisms), but the converse need not hold. However,   see the  discussion below Theorem \ref{thm:verttrans} for a partial converse.

\medskip

We have a natural generalization of the notion of Ollivier--Ricci curvature to this setting. We assume in this introduction that the graph has no self-loops, i.e.\ $x\nsim x$. This assumption serves only to simplify the notation and will later be removed. We  consider, as in \eqref{aaavefdsc}, the $1/2$-lazy symmetric random walk originating from $x$, namely
\begin{equation}
    X_x=
    \begin{cases}
        x\qquad&\text{with probability $\frac{1}2$}\,,\\
        y\qquad&\text{with probability $\frac{1}{2\deg(x)}$, for $y\sim x$}\,,
    \end{cases}
\end{equation}
where $\deg(x)\defeq|\{y\in G:y\sim x\}|$ is the degree of $G$ at $x$.
Ollivier--Ricci curvature can now be defined exactly as in Definitions \ref{ordefn} and \ref{ordef1}.

\medskip
\begin{theorem}\label{thm:verttrans}
	Let $G=(V,E)$ be a  transitive graph of polynomial growth. If $G$ has non-negative Ollivier--Ricci curvature, 	then it is quasi-isometric to $\ZZ^k$ for some $k\in\NN$.
\end{theorem}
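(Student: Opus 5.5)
The plan is to reduce to the group case and use the large-scale characterization, Theorem \ref{vecdscs}. A transitive graph $G$ of polynomial growth is, by Trofimov's theorem (the graph analogue of Gromov's theorem), quasi-isometric to a finitely generated virtually nilpotent group $\Gamma$. More concretely, $\Aut(G)$ has a compact normal subgroup $K$ with finite vertex stabilizers on the quotient graph $G/K$. Moreover, $\Aut(G)/K$ is a finitely generated virtually nilpotent group acting on $G/K$ with finite stabilizers and finitely many orbits. Once we show that $\Gamma$ is virtually abelian, we conclude: a virtually abelian finitely generated group contains $\ZZ^k$ with finite index, so $G$ is quasi-isometric to $\ZZ^k$.

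Step one is to transfer curvature information from $G$ to a Cayley graph of $\Gamma$. By Remark \ref{rem:vfdsc}, whose argument works verbatim on any graph, non-negative Ollivier--Ricci curvature on $G$ gives $\kappa_n(x,y)\ge0$ for all $x\ne y$ and all $n$. Equivalently,
\[
W_1(\mu_x^{\ast n},\mu_y^{\ast n})\le d(x,y).
\]
I would push the random walk on $G$ forward to the quotient by $K$, using the $1$-Lipschitz projection $G\to G/K$. I would then compare it with a random walk on $\Gamma$, via an orbit map $\Gamma\to V$, $\gamma\mapsto\gamma x_0$, which is a quasi-isometry. The aim is to produce, for a suitable finite symmetric generating set $S$ of $\Gamma$, a lazy walk whose $n$-step laws are close in $W_1$ to the images of the walk on $G$. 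Closeness should hold up to additive errors $O(1)$, or at least $o(n)$ at parabolic scale $n^2$.

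Step two argues by contradiction. Suppose $\Gamma$ is not virtually abelian. Theorem \ref{vecdscs} then gives $g_n$ with $d(e,g_n)\sim cn$ and
\[
W_1(\mu^{\ast n^2},\mu^{\ast n^2}_{g_n})\ge(1+\delta)\,d(e,g_n).
\]
Transferring to $G$ with only additive/multiplicative distortion would contradict the bound from step one. The catch is that quasi-isometries distort distances multiplicatively, which spoils a strict inequality. So the comparison must be sharper.

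This transfer is the main obstacle. I would first try to realize the walk on $G$ itself as a random walk driven by $\Aut(G)$. The law $\mu_{x}$ equals the pushforward of a symmetric probability measure $\nu$ on $\Aut(G)$ (or $\Aut(G)/K$): average over the stabilizer with Haar measure, then pick an automorphism moving $x_0$ to each neighbor. The $n$-step walk on $G$ is then exactly the orbit image of the $\nu$-random walk. Let $\Gamma$ be a cocompact lattice-like subgroup, or work directly with the locally compact group $\Aut(G)/K$, which is compactly generated and virtually nilpotent up to a compact kernel. The key point is that the proof of Theorem \ref{vecdscs} should depend only on the asymptotic cone / scaling limit (the Carnot group limit of the walk and its Gaussian-type asymptotics) and on the metric $d$ being asymptotically a Carnot–Carathéodory metric. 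I would therefore rerun that argument for the measure $\nu$ and the graph metric on $G$ pulled back by the orbit map, rather than for a Cayley-graph metric. This needs two inputs: Pansu-type convergence of $(G,d/n)$ to a graded nilpotent group with a subFinsler metric, and a central limit theorem for the $\nu$-walk at scale $n^2$ on that limit. Both hold for symmetric finitely supported walks on compactly generated nilpotent-by-compact groups. With them, the strict negativity of $\kappa_{n^2}$ along suitable directions follows as in the group case, contradicting $\kappa_{n^2}\ge0$ on $G$.
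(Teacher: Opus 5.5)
There is a genuine gap in the transfer step. Your reduction via Trofimov, and pushing the kernel to $G/K$ with curvature surviving because the projection is $1$-Lipschitz, match the paper. You also correctly see that a mere quasi-isometry would destroy the strict $(1+\delta)$ inequality. But the step that carries the proof, transferring the curvature bound to a Cayley graph with only additive error, is never done.

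Instead you propose to re-prove Theorem \ref{vecdscs} for a pulled-back metric via Pansu convergence and a central limit theorem at scale $n^2$. That is a programme, not an argument. The paper's proof of Theorem \ref{vecdscs:res} is a discrete duality argument: the polytope $E$ built from $S$, the auxiliary distance $d_\omega$, and the Gaussian bounds on $\Gamma$. It is not a statement about the asymptotic cone. Along your route you would still have to prove strict negativity of the curvature on a general sub-Finsler Carnot limit for the limiting heat kernel, and then pass back to the discrete scale. None of this is supplied; the paper only sketches the limit argument for $H_3(\RR)$ and deliberately avoids it.

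The missing idea is that no scaling limit is needed. After Trofimov, $\Gamma\defeq\Aut(G)/K$ acts transitively on $G/K$ with \emph{finite} stabilizers, so it is a discrete finitely generated group. Your ``Haar average'' is then just
\[
\mu(\gamma)\defeq P(x_0,\gamma x_0)/|\Gamma_{x_0}|,
\]
a symmetric, finitely supported, lazy, generating measure on $\Gamma$. The orbit map pushes $\mu^{\ast n}_\gamma$ forward exactly to $P^n(\gamma x_0,\cdot)$.

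Now set $S\defeq\{\gamma\neq e: d(x_0,\gamma x_0)\le 1\}$, which is finite because stabilizers are finite. Lift a geodesic $x_0=v_0,\dots,v_k=\gamma x_0$ to elements $\gamma_i$ with $\gamma_i x_0=v_i$; each $\gamma_{i-1}^{-1}\gamma_i$ lies in $S$. This gives
\[
d(\gamma x_0,\gamma' x_0)\le d_S(\gamma,\gamma')\le d(\gamma x_0,\gamma' x_0)+1.
\]
So the pulled-back metric \emph{is} a word metric up to an additive constant. This is essentially Sabidussi's identification of $\ell G$ with a Cayley graph of $\Gamma$, which the paper uses in Proposition \ref{notationsimplify:res}.

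Lifting couplings fibrewise, together with the iterated non-negative curvature on the quotient, then gives
\[
W_1^{d_S}(\mu^{\ast n}_\gamma,\mu^{\ast n}_{\gamma'})\le d_S(\gamma,\gamma')+C \qquad\text{for all } n.
\]
Now apply Theorem \ref{vecdscs:res}. You need this general-measure version, not Theorem \ref{vecdscs}, because $\mu$ is not the $1/2$-lazy walk with respect to $S$ when stabilizers are nontrivial. Since $|g_n|_S\sim n\to\infty$, the strict ratio $>1$ is incompatible with an additive error $C$. This contradiction shows $\Gamma$ is virtually abelian, with no limiting objects involved.
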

 Recall that two metric spaces $(X,d_X)$ and $(Y,d_Y)$ are said to be {$(A,B)$-quasi-isometric} if there exists a function
  $f:(X,d_X)\rightarrow (Y,d_Y)$
  such that
 \begin{equation}\notag
   \frac{1}{A}d_X(x,x')- B\leq d_Y(f(x),f(x'))\leq  {A}d_X(x,x')+ B\qquad\text{for every $x,x'\in X$}\,,
 \end{equation}
 and
 \begin{equation}
   \sup_{y\in Y}d_Y(y,f(X))\leq B\,.
 \end{equation}
 Such a function is referred to as an  $(A,B)$-quasi-isometry.
 We say that two metric spaces are quasi-isometric if they are $(A,B)$-quasi-isometric for some $A,B<\infty$.

 \medskip

It is shown in the forthcoming \cite{BHM} that a  transitive graph of non-negative Ollivier--Ricci curvature has polynomial growth. This motivates Theorem \ref{thm:verttrans}, and, in particular, shows that the assumption of polynomial growth can indeed be dropped. 
Thanks to the results of Trofimov \cite{MR735714, MR811571} and Sabidussi \cite{Sabidussi},  we will prove that a connected transitive graph of polynomial growth has a quotient which can be embedded into a Cayley graph of a group of polynomial growth with non-negative Ollivier--Ricci curvature at large scales, as defined in Subsection \ref{largescales} (but it does  not, in general, have non-negative Ollivier--Ricci curvature).  Hence, by our results, such group is virtually abelian, and therefore the graph is quasi-isometric to $\ZZ^k$ for some $k\in\NN$. 
\subsection{Strategy of the proof}
\label{sec:strategy}

We now discuss the broad scheme of the proof of Theorem \ref{vecdscs}, which is the heart of Theorem \ref{thm:main}. It is instructive to start from the simple case $\Gamma=H_3(\ZZ)\defeq \langle x_1,x_2\,|\,y\defeq [x_1,x_2]\text{ is central}\rangle$, where $S\defeq \{x_1,x_1^{-1},x_2,x_2^{-1}\}$. We will argue that there exists $\delta\in (0,1)$ such that 
\begin{equation}\label{tobound}
    W_1(\mu^{\ast n^2}_e,\mu^{\ast n^2}_{x_2^n})\ge n(1+\delta)\qquad\text{for $n$ large enough}\,.
\end{equation}
There are two main ways to accomplish this:
\begin{itemize}
    \item Prove the bound directly at the level of the Cayley graph.
    \item Consider the blow-down of the metric space given by the Cayley graph, which is $H_3(\ZZ)$, and argue at the continuous limit.
\end{itemize}
We are going to follow the first alternative. We anyhow discuss the second  in Section \ref{sect:noemandb}. 

We  need a non-trivial lower bound on $ W_1(\mu^{\ast n^2}_e,\mu^{\ast n^2}_{x_2^n})$. By the easy part of Kantorovich duality, see Subsection \ref{set:prooffive}, for every  $1$-Lipschitz function $\Psi$,
\begin{equation}\label{vfedcs}
    W_1(\mu^{\ast n^2}_e,\mu^{\ast n^2}_{x_2^n})\ge\sum_{g\in\Gamma} \Psi(g)\mu^{\ast n^2}_{x_2^n}(g)- \ \sum_{g\in\Gamma} \Psi(g)\mu^{\ast n^2}_{e}(g)\,.
\end{equation}
We thus look for a suitable $1$-Lipschitz function $\Psi$ giving the bound
\begin{equation}
    \sum_{g\in\Gamma} \Psi(g)\mu^{\ast n^2}_{x_2^n}(g)- \ \sum_{g\in\Gamma} \Psi(g)\mu^{\ast n^2}_{e}(g)\ge n(1+\delta)\,.
\end{equation}

We fix the Malcev basis $x_1,x_2,y$,  as in Subsection \ref{subsec:Malcev}. Thus, every element $g\in\Gamma$ can be written uniquely in the form $g=x_1^{a}x_2^{b}y^{c}$, where $a,b,c\in\ZZ$. A natural choice of the function $\Psi$ is $\ell(g)\defeq b(g)$. This is $1$-Lipschitz and moreover
\begin{equation}
        \sum_{g\in\Gamma} \ell(g)\mu^{\ast n^2}_{x_2^n}(g)- \ \sum_{g\in\Gamma} \ell(g)\mu^{\ast n^2}_{e}(g)=\sum_{g\in\Gamma} b(g) \mu^{\ast n^2}_{x_2^n}(g)- \ \sum_{g\in\Gamma} b(g)\mu^{\ast n^2}_{e}(g)= n\,.
\end{equation}
This proves \eqref{tobound} with $\delta=0$, which is not  enough: we need $\delta>0$. The most natural choice is to exploit the non-abelianity of the group. The easiest terms that recognize this effect are $a(g)c(g)$ and $b(g)c(g)$. We then try with
\begin{equation}
    \Psi_\gamma(g)\defeq b(g)- \gamma a(g)c(g)\,,
\end{equation}
where $\gamma\in(0,1)$ has to be chosen carefully. It turns out that $\Psi_\gamma$ is not $1$-Lipschitz on $\Gamma$, but we will take care of this issue later.
We can anyhow compute
\begin{equation}\label{bgrfdv}
\begin{split}
     \sum_{g\in\Gamma} \Psi_\gamma(g)&\mu^{\ast n^2}_{x_2^n}(g)- \ \sum_{g\in\Gamma} \Psi_\gamma(g)\mu^{\ast n^2}_{e}(g)
     \\&= \sum_{g\in\Gamma} (b(g)- \gamma a(g)c(g))\mu^{\ast n^2}_{x_2^n}(g)-  \sum_{g\in\Gamma} (b(g) - \gamma a(g)c(g))\mu^{\ast n^2}_{e}(g)
     \\
     &=n-\gamma\sum_{g\in\Gamma} (a{(x_2^ng)}c{(x_2^ng)}-a(g)c(g))\mu^{\ast n^2}_e(g)
     \\
     &=n-\gamma \sum_{g\in\Gamma} a(g)(-na(g)+c(g)-c(g))\mu^{\ast n^2}_e(g)\\
     &=n\bigg(1+\gamma \sum_{g\in\Gamma}a(g)^2\mu^{\ast n^2}_e(g)\bigg)\,.
\end{split}
\end{equation}
Now, the task is to choose $\gamma$ and cut-off $\Psi_\gamma$ (everything depending on $n$) such that the modified potential is $1$-Lipschitz and such that it holds $\gamma_n \sum_{g\in\Gamma}a(g)^2\mu^{\ast n^2}_e(g)>\delta$, for some $\delta\in(0,1)$ independent of $n$.   This will be part of our discussion  in Subsections \ref{subsec:Kantorovich} and \ref{subsec:mainprop}. Once  this is accomplished, \eqref{tobound} follows from \eqref{vfedcs} and \eqref{bgrfdv}.

In the general case, we face three main difficulties:
\begin{enumerate}
    \item $\Gamma$ is not nilpotent, but only virtually nilpotent. That is, there exists  only $\Gamma_N\unlhd\Gamma$ nilpotent of finite index.
    \item We do not have  a simple set of generators as we had for $H_3(\ZZ)$.
    \item $\Gamma_N$ needs not to be as simple as $H_3(\ZZ)$, as a group.
\end{enumerate}
Item $(3)$ is the easiest to deal with: up to taking a suitable quotient, we can assume that  $\Gamma_N$ has step $2$. This is because the Ollivier--Ricci curvature does not decrease after taking quotients, see Subsection \ref{sec:reduction_step_2} for details.  This reduction is not really necessary, but simplifies the rest of the proof.
Items $(1)$ and $(2)$ are better dealt together at the same time. There are various ways to do that, ours is the following: the restriction of $d$ to $\Gamma_N$ is a left invariant distance on $\Gamma_N$. While, at definite scales, it does not have the nice properties that we used before, asymptotically, it resembles a much simpler distance resembling a word-length metric. This is discussed in Subsection \ref{subsec:auxiliary_distance}.

\subsection{Nilpotent structures and curvature}

The relation between nilpotent structures and spaces with uniform curvature bounds has a long history.
In 1978, Gromov proved his celebrated almost flat manifold theorem, later generalized and refined in \cite{BuserKarcher,Ruh}.
It states that a compact Riemannian manifold $(M^n,g)$ with sufficiently small sectional curvature $|\operatorname{Sec}_g|\le\varepsilon(n)$, and bounded diameter $\operatorname{diam}_g(M)\le 1$, is finitely covered by a nilmanifold, that is, by a quotient of a nilpotent Lie group by a cocompact lattice.
A basic example is the Heisenberg nilmanifold ${\rm Nil}^3\defeq H_3(\mathbb{R})/H_3(\ZZ)$, see \eqref{eq:heisenberg_R}.
Later, Fukaya developed a local version of this picture \cite{Fukaya88}, describing manifolds with bounded sectional curvature and sufficiently collapsed geometry in terms of fibration structures whose fibers carry nilpotent features.

It was then understood that the two-sided sectional curvature bound can be weakened to a lower curvature bound, while still retaining nilpotent structure at the level of the local fundamental group.
In \cite{FukayaYamaguchi}, Fukaya--Yamaguchi proved a generalized Margulis-type result showing that, under a lower sectional curvature bound $\operatorname{Sec}_g\ge -1$, the image of the local fundamental group $\pi_1(B_{\varepsilon(n)}(p))\to \pi_1(B_1(p))$
is virtually nilpotent.
Later, Kapovitch--Petrunin--Tuschmann \cite{KapovitchPetruninTuschmann} refined this analysis by proving quantitative bounds on the index and the nilpotency length.
Finally, Kapovitch--Wilking \cite{KapovitchWilking} established analogous results under lower Ricci curvature bounds.

In the setting of complete manifolds with non-negative Ricci curvature, the first results for fundamental groups go back to Milnor's 1968 work \cite{Milnor1968}, where he proved polynomial growth for finitely generated subgroups of the fundamental group.
Combined with Gromov's theorem on groups of polynomial growth \cite{MR623534}, this yields virtual nilpotency of every finitely generated subgroup of the fundamental group.
In the same 1968 paper, Milnor formulated his famous conjecture that the fundamental group of a complete manifold with non-negative Ricci curvature should always be finitely generated.
This conjecture was disproved  recently \cite{BrueNaberSemola,BrueNaberSemola6D}.

\medskip

While lower curvature bounds, such as $\operatorname{Sec}_g\ge -1$ or $\operatorname{Ric}_g\ge -(n-1)$, make nilpotent structures appear, non-negative curvature often forces these structures to further rigidify to abelian ones.
This class of phenomena is closer in spirit to the main result of the present paper, see Theorem \ref{vecdscs}.
One of the first instances of this phenomenon appears in Milnor's 1968 paper.
He observed that the fundamental group of the Heisenberg nilmanifold ${\rm Nil}^3\defeq H_3(\mathbb{R})/H_3(\ZZ)$ has quartic polynomial growth, due to its nilpotent structure.
This growth is faster than the general bound he proved for finitely generated subgroups of fundamental groups of complete $n$-dimensional manifolds with non-negative Ricci curvature, which is polynomial of degree at most $n$.
In particular, the Heisenberg nilmanifold cannot admit any Riemannian metric with non-negative Ricci curvature, even though it admits metrics with   sectional curvature arbitrarily  close to $0$.

More generally, Cheeger and Gromoll used their splitting theorem \cite{Cheeger-Gromoll-splitting} to show that the fundamental group of a compact manifold with non-negative Ricci curvature is virtually abelian.
The compactness assumption is fundamental, as shown by Wei \cite{Wei}; see also \cite{Wilking}.
Fukaya and Yamaguchi conjectured \cite{FukayaYamaguchi} that, under non-negative sectional curvature, the index of the abelian subgroup should be bounded by a constant depending only on the dimension.
This conjecture remains open.
Recently, Bruè--Naber--Semola disproved the analogous statement under non-negative Ricci curvature \cite{BrueNaberSemolaRicci}.

Finally, we mention Carnot--Carathéodory structures, which naturally appear as blow-downs of nilpotent structures; see Section \ref{sect:noemandb}.
They are known not to satisfy lower Ricci curvature bounds in the Lott--Sturm--Villani synthetic sense, see for instance \cite{Juillet2021,MagnaboscoRossi,RizziStefani2023,DriverMelcher2005,AmbrosioStefani2020,HuangSun2020,MagnaboscoRossi1}, even in basic examples such as the Heisenberg group.
This is consistent with the heuristic that non-abelian nilpotent structures carry negative curvature at some scale: after rescaling and passing to the blow-down, this negative contribution degenerates in the limiting sub-Riemannian geometry.

\medskip

We conclude this section by presenting one further analogy between the discrete framework of this paper and the Riemannian framework discussed above.
Let $G$ be an abstract group.
By \cite[Theorem~2.1]{Wilking} ($(2)\Rightarrow (1)$ is proved in \cite{CheegerGromollPi1,Cheeger-Gromoll-splitting}), the following are equivalent:
\begin{itemize}
\item[(1)] $G$ is finitely generated and virtually abelian;
\item[(2)] there exists a complete Riemannian manifold $(\widetilde M,\widetilde g)$ with non-negative Ricci curvature such that $G$ is a discrete closed subgroup of ${\rm Iso}(\widetilde M,\widetilde g)$ acting freely and cocompactly;
\item[(3)] there exists a complete Riemannian manifold $(\widetilde M,\widetilde g)$ with non-negative sectional curvature such that $G$ is a discrete closed subgroup of ${\rm Iso}(\widetilde M,\widetilde g)$ acting freely and cocompactly.
\end{itemize}
On the discrete side,  the following are equivalent:
\begin{itemize}
\item[(1')] $G$ is finitely generated and virtually abelian;
\item[(2')] there exist a group $\Gamma$ and a finite symmetric set of generators $S$ such that the Cayley graph of $\Gamma$ with respect to $S$ has non-negative Ollivier--Ricci curvature, and $G$ acts freely and cocompactly on it by graph isomorphism;
\item[(3')] there exist a group $\Gamma$ and a finite symmetric set of generators $S$ such that the Cayley graph of $\Gamma$ with respect to $S$ satisfies \eqref{efrvcrdscf}, and $G$ acts freely and cocompactly on it by graph isomorphism.
\end{itemize}
Indeed, for $(1')\Rightarrow(3')$, we take $\Gamma=G$ and $S$ given by Theorem \ref{thm:virtabel1:rest}, which is a strengthened version of Theorem \ref{thm:main}. Of course, $G$ acts on itself by left-multiplication.
$(3')\Rightarrow(2')$ is trivial. Now, take $\Gamma$ and $S$ as in $(2')$. By the main result of the forthcoming \cite{BHM} and Gromov's Theorem \cite{MR623534}, $\Gamma$ is virtually nilpotent. By Theorem \ref{thm:main}, $\Gamma$ is virtually abelian, hence quasi-isometric to $\ZZ^d$, for some $d$. As the Cayley graph of $\Gamma$ is locally finite and the action is free, the action is also proper. Hence, by  the Švarc--Milnor Lemma (e.g., \cite[Proposition 8.19]{BH99}), we see that $G$ is quasi-isometric to $\Gamma$, hence quasi-isometric to $\ZZ^d$, for some $d$. Therefore, $(1')$ follows from \cite[Theorem 1.1]{Shalom}. 

We finally remark that, in the above equivalence, Cayley graphs can be replaced by transitive graphs.
In particular, finitely generated virtually abelian groups are precisely those admitting a free and cocompact action by graph automorphisms on a connected transitive graph of non-negative Ollivier--Ricci curvature.
This is proved exactly as above.

\subsection{The use of AI}
In parts of this work, the authors were assisted by ChatGPT 5.5 Thinking.
In particular, while studying the toy example of the Heisenberg group $H_3(\mathbb{Z})$ with standard generators and uniform measures on balls, ChatGPT suggested a version of the potential defined in Subsection \ref{subsec:Kantorovich}.
It also suggested possible extensions beyond the specific setting of $H_3(\mathbb{Z})$, although the final presentation in this note differs from those suggestions.

When prompted with specific questions about the asymptotic behavior of the distance in the discrete setting, ChatGPT's answers inspired the authors and contributed to the development of Subsection \ref{subsec:auxiliary_distance}.
ChatGPT also proposed an argument for estimating the convolutions $\mu^{\ast n}$, but the authors found this argument overcomplicated and did not use it.
ChatGPT was also useful in checking the proof of Theorem \ref{thm:virtabel1:rest}, specifically in verifying that the claimed set of generators can be chosen as stated.
Finally, ChatGPT suggested the observation in Remark \ref{rempianofurbo} after the authors prompted it with the group to look for the example.

ChatGPT was also used for proofreading.
All mathematical arguments, computations, and conclusions were independently verified by the authors. No text in this article was written by AI.

\subsection{Acknowledgments}
Part of this work was carried out while CB was a Member and EB was a von Neumann Fellow at the Institute for Advanced Study; they gratefully acknowledge its excellent working conditions and support. This material is based upon work supported by the National
Science Foundation under Grant No.\ DMS-2424441.

\section{Setting}

Let $\Gamma$ be a finitely generated group and let $S\subseteq \Gamma$ be a fixed finite symmetric set of generators, that is, $S=S^{-1}$.
It is well known that $S$ induces the word-length function
\begin{equation}\label{brgfdsvcx}
|g|_S \defeq \min\{k\in \mathbb{N} : g=s_1 \cdots s_k,\ s_i\in S\} \qquad \text{for }g\in \Gamma\,,
\end{equation}
which then induces the left-invariant distance $d_S:\Gamma\times\Gamma\to\mathbb{N}$ defined by
\begin{equation}
d_S(g,h)\defeq |g^{-1}h|_S\qquad\text{for }
 g,h\in \Gamma\,.
\end{equation}

\subsection{Virtually nilpotent}\label{subsec:virtually}

In this work, we will always assume that $\Gamma$ is finitely generated and  virtually nilpotent. This means that $\Gamma$ admits a nilpotent subgroup of finite index.
We now follow the notation of \cite{Alexopoulos}, which in turn relies on \cite{Raghunathan}.
As observed there, it is standard to find a nilpotent, finitely generated, torsion-free subgroup $\Gamma_N\unlhd \Gamma$ of finite index, that is, $|\GGN|<\infty$.

We choose a section $\sigma:\GGN\to\Gamma$ of the projection $\pi:\Gamma\to\GGN$, that is, $\pi \circ\sigma=\operatorname{id}_{\GGN}$.
We also assume that $\sigma(e)=e$.
Then every element $g\in\Gamma$ can be written uniquely in the form
\begin{equation}\label{eq:g_n}
    g=g_N \sigma(\pi(g))\,.
\end{equation}
Notice that $g_N\in\Gamma_N$.
Moreover, since $\GGN$ is finite, there exists $C_\sigma>0$ such that
\begin{equation}\label{csdcdcsc}
    d_S(g,g_N)\le C_\sigma\qquad\text{for every $g\in\Gamma$}\,.
\end{equation}

We consider the isolated lower central series of $\Gamma_N$.
It is obtained by enlarging the lower central series
\begin{equation}
\gamma_1(\Gamma_N)=\Gamma_N,\qquad \gamma_{i+1}(\Gamma_N)=[\Gamma_N,\gamma_i(\Gamma_N)]\,,
\end{equation}
by setting
\begin{equation}\label{eq:square_root_gamma}
\Gamma_{N,i}\defeq\sqrt{\gamma_i(\Gamma_N)}
=\{h\in \Gamma_N : h^k\in \gamma_i(\Gamma_N) \text{ for some $k\in \mathbb{N}\setminus\{0\}$}\}\, .
\end{equation}

Notice that $\Gamma_{N,i}$ is a normal subgroup of $\Gamma$ for every $i$.
Indeed, it is a subgroup because it is the preimage of the torsion subgroup of $\sfrac{\Gamma_N}{\gamma_i(\Gamma_N)}$ through the projection map $p_i:\Gamma_N\to \sfrac{\Gamma_N}{\gamma_i(\Gamma_N)}$. Here we recall that the elements of finite order in a nilpotent group form a subgroup.
Next we observe that $\gamma_i(\Gamma_N)$ is normal in $\Gamma$. This follows by induction on $i$. For $i=1$, simply notice that $\Gamma_{N,1} = \Gamma_N\unlhd\Gamma$. Now, $\gamma_{i+1}(\Gamma_N)$ is generated by commutators of the type $[h,k]$, where $h\in \Gamma_N$ and $k\in\gamma_{i}(\Gamma_N)$. Hence, 
\begin{equation}
    g[h,k]^{\pm 1} g^{-1}=[g h g^{-1}, gkg^{-1}]^{\pm 1}\in\gamma_{i+1}(\Gamma_N)\qquad\text{for every }g\in\Gamma\,,
\end{equation}
as $\Gamma_N\unlhd \Gamma$ and $\gamma_i(\Gamma_N)\unlhd \Gamma$ by the inductive assumption. It then follows that $\gamma_{i+1}(\Gamma_N)\unlhd\Gamma$.

Finally, if $h\in\Gamma_{N,i}$ and $g\in\Gamma$, then $(ghg^{-1})^k=g h^k g^{-1}\in\gamma_i(\Gamma_N)$ for some $k\ge1$, and therefore $ghg^{-1}\in\Gamma_{N,i}$.

\medskip
For every $i\ge 1$, we have the lower central series property
\begin{equation}\label{cdscscd}
    [\Gamma_{N},\Gamma_{N,i}]\subseteq\Gamma_{N,i+1}\,.
\end{equation}
Indeed, let $h\in \Gamma_{N,i}$, so that $h^k\in \gamma_i(\Gamma_N)$ for some $k\in\mathbb{N}\setminus\{0\}$.
Then $[\Gamma_N,h^k]\subseteq \gamma_{i+1}(\Gamma_N)\subseteq\Gamma_{N,i+1}$, and hence $q_{i+1}(h^k)\in Z\big(\sfrac{\Gamma_N}{\Gamma_{N,i+1}}\big)$ where $q_i: \Gamma_N \to \sfrac{\Gamma_N}{\Gamma_{N,i}}$ is the quotient map.
Since $\sfrac{\Gamma_N}{\Gamma_{N,i+1}}$ is nilpotent and torsion-free, we have $p_i(h)\in Z\big(\sfrac{\Gamma_N}{\Gamma_{N,i+1}}\big)$.
Equivalently, $[\Gamma_N,h]\subseteq \Gamma_{N,i+1}$.

\medskip
If $c\in\mathbb{N}$ denotes the nilpotency class of $\Gamma_N$, then $\gamma_{c+1}(\Gamma_N)=\{e\}$.
Thus $\Gamma_{N,c+1}=\{e\}$, since $\Gamma_N$ is torsion-free.
In particular,
\begin{equation}\label{isolcent}
\Gamma_N=\Gamma_{N,1}\unrhd\Gamma_{N,2}\unrhd\cdots \unrhd\Gamma_{N,c+1}=\{e\}\,.
\end{equation}
By \eqref{cdscscd} and the definition of $\Gamma_{N,i}$ in \eqref{eq:square_root_gamma}, we have
\begin{equation}
A_i\defeq\sfrac{\Gamma_{N,i}}{\Gamma_{N,i+1}}\cong\mathbb{Z}^{n_i}\qquad\text{for every $i=1,\dots,c$}\,.
\end{equation}
Notice that our notation for the integers $n_i$ differs from that of \cite{Alexopoulos}.
We denote by
\begin{equation}
\pi_i:\Gamma_{N,i}\to A_i
\end{equation}
the quotient projections.
In what follows, it will be convenient to extend $A_i$ to the real vector space $A_i\otimes_{\mathbb{Z}}\mathbb{R}\cong\mathbb{R}^{n_i}$ and to consider the composition of the projection map with the inclusion into this vector space.
With a slight abuse of notation, we will denote this map by
\begin{equation}
\pi_i:\Gamma_{N,i}\to A_i\otimes_{\mathbb{Z}}\mathbb{R}\,.
\end{equation}

\subsection{Markov kernels}\label{sect:mk}
Let $\mu$ be a probability measure on $\Gamma$ whose support $\supp(\mu)$ contains $e$, is bounded and generates $\Gamma$.
We will also assume that $\mu$ is symmetric, i.e., 
\begin{equation}
    \mu(g)=\mu(g^{-1}) \quad \text{for every $g\in\Gamma$}\,.
\end{equation}

We often think of $\mu$ as the law of a Markov process.
The associated kernel at $g\in\Gamma$ is given by the left-translation
\begin{equation}
\mu_g(h)\defeq (L_g)_*\mu(h)=\mu(g^{-1}h)\qquad\text{for }
 h\in\Gamma\,.
\end{equation}
With this interpretation, the $n$-fold convolution $\mu^{\ast n}$ represents the law of the process at time $t=n$, and the kernel at $g\in\Gamma$ is denoted by
\begin{equation}
\mu^{\ast n}_g\defeq (L_g)_*\mu^{\ast n}\,.
\end{equation}
Recall that the convolution of two measures is defined by
\begin{equation}
\mu\ast\nu(g)
\defeq
\sum_{h\in\Gamma}\mu(h)\nu(h^{-1}g)
\qquad \text{for }g\in \Gamma\,.
\end{equation}

Let now $D\in\NN$ be the growth exponent of $\Gamma_N$, see \cite{Bass}, i.e.,
\begin{equation}\label{VolumeDoubling}
    C^{-1}n^D\le |B_n(e)|\le C n^D\qquad\text{for every $n\ge 1$}\,,
\end{equation}
where $B_n^{d_S}(e)$ can be equivalently taken in $\Gamma_N$ or $\Gamma$.
We recall the following Gaussian estimates of \cite[Theorem 5.1]{HebischSaloff}: for every $n\ge 1$,
\begin{equation}\label{GaussianBounds}
    \begin{split}
    \mu^{\ast n}(g)\le \frac{C}{n^{D/2}}\exp{-\frac{|g|_S^2}{Cn}}\qquad&\text{for every }g\in\Gamma\\
    \frac{1}{C{n}^{D/2}}\exp{-C\frac{|g|_S^2}{n}}\le \mu^{\ast n}(g)\qquad&\text{for every $g\in\Gamma$ with $|g|_S\le n/C$}\,.
    \end{split}
\end{equation}

Consider now the map $\Gamma\ni g\mapsto g_N\in\Gamma_N\unlhd\Gamma$ (see \eqref{eq:g_n}) and define a probability measure on $\Gamma_N$, for every $n\in\NN$, as
\begin{equation}\label{eq:nu_n}
    \nu^{(n)}\defeq (\,\cdot\,_N)_*\mu^{\ast n}\,,
\end{equation}
i.e., $\nu^{(n)}(g)=\sum_{b\in\GGN}\mu^{\ast n}(g\sigma(b))$. For $n\in\NN$ and $g\in\Gamma_N$, set 
\begin{equation}
 \nu^{(n)}_g=(L_g)_*\nu^{(n)}=(\,\cdot\,_N)_*\mu^{\ast n}_g\,.   
\end{equation}
In general, $\nu^{(n)}$ is \emph{not} symmetric and $\nu^{ (n)}\ne (\nu^{(1)})^{\ast n}$,
which explains the choice of the notation $\nu^{(n)}$.
 By \eqref{csdcdcsc}, we have
\begin{equation}\label{brfdv}
    W_1^{d_S}(\mu^{\ast n}_g,\nu^{(n)}_g)\le C_\sigma\qquad\text{for every $g\in\Gamma_N$ and $n\in\NN$}\,.
\end{equation}
Notice that the Gaussian bounds \eqref{GaussianBounds} hold also for $\nu^{ (n)}$ in place of $\mu^{\ast n}$. Moreover, there exists $C$ such that  $\supp(\nu^{(n)})\subseteq B_{Cn}^{d_S}(e)$ for every $n\ge 1$.

\begin{lemma}\label{vfedsc}
For every $H\in\NN$,
\begin{equation}
    \nu^{ (n^2)}\big(\Gamma_N\setminus B_{Hn}^{d_S}(e)\big)\le Ce^{-H^2/C}\qquad\text{for every $n\in\NN$}\,.
\end{equation}    
\end{lemma}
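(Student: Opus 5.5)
The plan is to sum the Gaussian upper bound \eqref{GaussianBounds} over dyadic shells and to control their cardinality with the volume growth \eqref{VolumeDoubling}. Since the Gaussian bounds hold for $\nu^{(n)}$ as well as for $\mu^{\ast n}$, we work with $\nu^{(n^2)}$ directly. Alternatively, one can bound through $\mu^{\ast n^2}$ and then transfer to $\nu^{(n^2)}$. By \eqref{csdcdcsc}, if $g_N\notin B_{Hn}^{d_S}(e)$ then $|g|_S\ge Hn-C_\sigma$. So $\nu^{(n^2)}(\Gamma_N\setminus B_{Hn}^{d_S}(e))\le\mu^{\ast n^2}(\Gamma\setminus B^{d_S}_{Hn-C_\sigma}(e))$, and the shift by $C_\sigma$ is absorbed into the constants. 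This is harmless when $Hn\ge 2C_\sigma$, and in the remaining cases we use the trivial bound described below.

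For the main estimate, write $R\defeq Hn$ and decompose $\Gamma\setminus B_R(e)$ into the shells $\{2^kR\le |g|_S<2^{k+1}R\}$ for $k\ge0$. On the $k$-th shell the upper Gaussian bound with time $n^2$ gives
\begin{equation*}
\mu^{\ast n^2}(g)\le \frac{C}{n^{D}}\exp\Big(-\frac{4^kH^2}{C}\Big).
\end{equation*}
The shell has at most $|B_{2^{k+1}R}(e)|\le C(2^{k+1}Hn)^D$ elements. The $n^D$ factors cancel, so the $k$-th shell carries mass at most
\begin{equation*}
C\,2^{kD}H^D\exp\Big(-\frac{4^kH^2}{C}\Big).
\end{equation*}

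Summing over $k$, the series is dominated by its first term up to a constant, because the exponential decays like $e^{-4^kH^2/C}$ and beats the factor $2^{kD}$ once $H\ge1$. This gives a total of at most $CH^De^{-H^2/C}$. The polynomial factor $H^D$ is absorbed by enlarging $C$ in the exponent, via $H^De^{-H^2/C}\le C'e^{-H^2/(2C)}$.

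Edge cases. For $H=0$ the claim is trivial, since the left-hand side is at most $1\le Ce^{0}$. In general the claim is trivial whenever $Ce^{-H^2/C}\ge1$, so we may assume that $H$ is large. The case $Hn<2C_\sigma$ also has to be handled, which happens when $n$ is small. There $H\le 2C_\sigma$ is bounded, so after enlarging $C$ the claim again reduces to the trivial bound by $1$.

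The only real point of care is this uniformity in $n$: the constants must not depend on $n$. This is guaranteed by the exact cancellation of $n^{D}$ between the heat kernel normalization and the volume growth.
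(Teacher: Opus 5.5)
Your argument is correct, but it takes a different route from the paper's. You cover the whole complement of $B_{Hn}^{d_S}(e)$ by dyadic shells and bound each shell by the upper Gaussian estimate at time $n^2$ times the upper volume bound $|B_r|\le Cr^D$. The factor $n^{D}$ cancels, and the series sums to $CH^De^{-H^2/C}$.

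The paper instead first uses $\supp(\nu^{(n^2)})\subseteq B^{d_S}_{Cn^2}(e)$ to reduce to $H\le Cn$, and then splits at radius $n^2/C_1$:
\begin{itemize}
\item Beyond that radius, the support bound, volume growth and the upper Gaussian estimate give $Cn^De^{-n^2/C}$. This is absorbed into $Ce^{-H^2/C}$ because $H\le Cn$.
\item In the range $Hn\le |g|_S\le n^2/C_1$, it proves a pointwise comparison $\nu^{(n^2)}(g)\le Ce^{-H^2/C}\,\nu^{(\lceil Cn^2\rceil)}(g)$. This combines the upper Gaussian bound at time $n^2$ with the lower Gaussian bound at the longer time $\lceil Cn^2\rceil$. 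The lower bound is only available for $|g|_S\lesssim n^2$, which is why the split is needed. Summing then uses only that $\nu^{(\lceil Cn^2\rceil)}$ is a probability measure, so no points are counted in this range.
\end{itemize}

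So the paper trades point-counting for a heat-kernel comparison, the same mechanism it reuses in Lemma \ref{scacs}. That costs the two-sided estimate \eqref{GaussianBounds} and the support bound, but avoids any shell summation or polynomial prefactor. Your version needs only the upper halves of \eqref{GaussianBounds} and \eqref{VolumeDoubling}, and no case split in terms of $n$. The price is the factor $H^D$, which you correctly absorb by shrinking the constant in the exponent.

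Your handling of the shift by $C_\sigma$ and of the small-$H$ and small-$n$ cases is fine, and the resulting constants are independent of both $n$ and $H$.
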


\begin{proof} 
For simplicity of notation, we write $B_R$ in place of $B_R^{d_S}(e)$.
Notice that by the bound on $\supp(\nu^{(n^2)})$,  we can assume $H\le Cn$, otherwise there is nothing to show.
We compute,   for $C_1>0$ large enough (to have the condition of the last line of \eqref{GaussianBounds}),
\begin{align}
  \nu^{(n^2)}(\Gamma_N\setminus B_{Hn})&\le \nu^{(n^2)}(\Gamma_N\setminus B_{n^2/C_1})+
\nu^{(n^2)}(B_{n^2/C_1}\setminus B_{Hn})
\\&\le (Cn^2)^{D}\frac{C}{n^D}\exp{-\frac{(n^2/C_1)^2}{Cn^2}}+e^{-H^2/C_1}C\nu^{( \lceil Cn^2\rceil)}(\Gamma_N)\\
&\le Cn^D e^{-n^2/C}+Ce^{-H^2/C}\le Ce^{-H^2/C}\,,
\end{align}
where we used also \eqref{VolumeDoubling} and \eqref{GaussianBounds}.
\end{proof}

\begin{lemma}\label{scacs}
Let $\alpha:A_1\otimes_\ZZ \RR\rightarrow\RR$ be linear and not identically $0$. Then, it holds that
    \begin{alignat}{2}
        \sum_{g\in\Gamma_N}\alpha^2(\pi_1(g))\nu^{(n)}(g)&\ge n /C&\qquad\text{for every $n\ge C$}\,,\label{revdscc0}\\
        \sum_{g\in\Gamma_N}\alpha^4(\pi_1(g)) \nu^{(n)}(g)&\le C n^2&\qquad\text{for every $n\ge C$} \,\label{revdscc01},
    \end{alignat}where $C$ depends also on $\alpha$.
\end{lemma}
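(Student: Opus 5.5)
The upper bound \eqref{revdscc01} follows directly from the Gaussian estimates. The map $\pi_1:\Gamma_N\to A_1\otimes_\ZZ\RR$ is a group homomorphism to an abelian group. Since $d_S$ restricted to $\Gamma_N$ is comparable at large scales to a word metric on $\Gamma_N$, the composition $\alpha\circ\pi_1$ is coarsely Lipschitz. Concretely, $|\alpha(\pi_1(g))|\le C(|g|_S+1)$ for $g\in\Gamma_N$: write $g$ as a word in $S$, and group letters into blocks passing through coset representatives, each block contributing a bounded element of $\Gamma_N$. Hence it suffices to bound $\sum_g (|g|_S+1)^4\nu^{(n)}(g)$. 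Splitting into dyadic annuli $B_{2^{k+1}\sqrt n}\setminus B_{2^k\sqrt n}$ and using \eqref{VolumeDoubling} together with the upper bound in \eqref{GaussianBounds} (which holds for $\nu^{(n)}$) gives $\le Cn^2$. Equivalently, one can use the tail estimate in the proof of Lemma \ref{vfedsc} with $n$ in place of $n^2$.

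For the lower bound \eqref{revdscc0}, the cheapest route avoids any central limit theorem. Write $f\defeq\alpha\circ\pi_1$, a homomorphism $\Gamma_N\to\RR$. Since $f$ is unbounded, $\ker f$ has infinite index in $\Gamma_N$, and $f$ takes a discrete set of values $\alpha(A_1)$. For $\lambda>0$, let $U_\lambda\defeq\{g\in\Gamma_N: |f(g)|\le\lambda\sqrt n\}$. We have $\sum f^2\nu^{(n)}\ge\lambda^2 n\,\nu^{(n)}(\Gamma_N\setminus U_\lambda)$, so it suffices to find a fixed $\lambda$ with $\nu^{(n)}(U_\lambda)\le 1-c$.

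I would use the lower Gaussian bound in \eqref{GaussianBounds}. Pick $h\in\Gamma_N$ with $f(h)\ne0$, and consider the elements $g=h^{m}$ with $m\simeq K\sqrt n$, where $K$ is fixed. Then $|g|_S\le C K\sqrt n\le n/C$ for $n$ large, and $|f(g)|\ge cK\sqrt n$. To get positive mass, rather than lower-bounding single points, lower-bound the whole ball $h^m B_{\sqrt n}$. Its elements $k$ satisfy $|k|_S\le CK\sqrt n$, so $\mu^{\ast n}(k)\ge c_K n^{-D/2}$. The ball has $\ge c n^{D/2}$ points, and on it $|f|\ge cK\sqrt n - C\sqrt n\ge \sqrt n$ for $K$ large. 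To pass from $\mu^{\ast n}$ to $\nu^{(n)}$, note that $\nu^{(n)}(k)\ge\mu^{\ast n}(k)$ for $k\in\Gamma_N$, since $\sigma(e)=e$. This gives $\nu^{(n)}(\{|f|\ge\sqrt n\})\ge c_K>0$, hence $\sum f^2\nu^{(n)}\ge c_K n$.

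The main technical obstacle is the coarse Lipschitz property of $f$ with respect to $d_S$, including on $\Gamma_N$-balls where the ball is measured in $d_S$ (the cardinality bound $|B_{\sqrt n}\cap\Gamma_N|\ge cn^{D/2}$ uses \eqref{VolumeDoubling} for $\Gamma_N$). I would handle it via the Schreier-type argument above. Since $\Gamma_N$ is finite index and all metrics involved are quasi-isometric with additive errors, this should work with $C$ depending on $\alpha$ and $\sigma$.
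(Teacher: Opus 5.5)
Your proof is correct, but it takes a different route from the paper's.

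\textbf{The paper's route.} The paper fixes an auxiliary symmetric, finitely supported walk $\lambda$ on $\Gamma_N$. Because $\alpha\circ\pi_1$ is a homomorphism and symmetry kills the odd cross terms, it gets exact moment identities: $\sum\alpha^2\lambda^{\ast n}=n\sum\alpha^2\lambda$, plus an explicit formula for the fourth moment. It then transfers these to $\nu^{(n)}$ through a pointwise comparison on $B_{n/C_1}$,
\[
C^{-1}\lambda^{\ast\lfloor n/C\rfloor}\le\nu^{(n)}\le C\lambda^{\ast\lceil Cn\rceil},
\]
which it extracts from the Gaussian bounds of both walks, and controls the remainder with a tail estimate as in Lemma \ref{vfedsc}.

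\textbf{Your route.} You work with $\nu^{(n)}$ alone. The fourth-moment bound comes from the Gaussian upper bound in \eqref{GaussianBounds}, the volume growth \eqref{VolumeDoubling}, and the linear bound $|\alpha(\pi_1(g))|\le C|g|_S$. The second-moment bound is an anti-concentration argument on the translated ball $h^mB_{\sqrt n}$. There $|\alpha\circ\pi_1|\ge\sqrt n$, and the Gaussian lower bound at the diffusive scale gives mass at least $c_K>0$. This uses $\nu^{(n)}\ge\mu^{\ast n}$ on $\Gamma_N$, which holds because $\sigma(e)=e$. All the steps check out, including the requirement $CK\sqrt n\le n/C$ for $n$ large depending on $\alpha$.

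\textbf{What each buys.} Your argument avoids the second walk, its heat kernel bounds, and the comparison across different time scales. It also writes out the fourth-moment bound, which the paper omits as analogous. The paper's argument makes the linear growth of the second moment structurally transparent (uncorrelated increments of a homomorphism). In yours, that growth comes out of a positive-mass estimate with a non-explicit constant $c_K$.

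\textbf{Two small remarks.}
\begin{itemize}
\item The linear bound you call the main technical obstacle is immediate: it is \eqref{eq:E-S} combined with $|\alpha(v)|\le C|v|_E$. Your Schreier-type telescoping also proves it correctly and has the advantage of avoiding a forward reference.
\item Your aside that $f$ takes values in the discrete set $\alpha(A_1)$ is false in general. For example, $\alpha(a,b)=a+\sqrt2\, b$ on $\ZZ^2$ has dense image. You never use this claim, nor the remark about $\ker f$, so simply delete both.
\end{itemize}
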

\begin{proof}
To simplify the notation, we write $\alpha$ in place of $\alpha\circ\pi_1$.
We  start from exact identities for a symmetric probability measure $\lambda$ on $\Gamma_N$ with bounded support:  for every $n\in\NN$,

\begin{align}
        \sum_{g\in\Gamma_N}\alpha^2(g)\lambda^{\ast n}(g)&=n \sum_{g\in\Gamma_N}\alpha^2(g)\lambda(g)\,\label{revdscc}\\
        \sum_{g\in\Gamma_N}\alpha^4(g) \lambda^{\ast n}(g)&=n\sum_{g\in\Gamma_N}\alpha^4(g)\lambda(g)+3n(n-1)\Big(\sum_{g\in\Gamma_N}\alpha^2(g)\lambda(g)\Big)^2\label{revdscc1}\,.
    \end{align}
    Indeed, if $\gamma$ and $\varphi$ are symmetric probability measures on $\Gamma_N$ with bounded support, then
    \begin{align}
        \sum_{g\in\Gamma_N} \alpha^2(g)\gamma\ast\varphi(g)&=\sum_{g\in\Gamma_N}\alpha^2(g)\sum_{h\in\Gamma_N}\gamma(h)\varphi(h^{-1}g)=\sum_{h\in\Gamma_N}\sum_{g\in\Gamma_N}\alpha^2(g)\gamma(h)\varphi(h^{-1}g)\\
    &=\sum_{h\in\Gamma_N}\sum_{g\in\Gamma_N}\alpha^2(hg)\gamma(h)\varphi(g)=\sum_{h\in\Gamma_N}\sum_{g\in\Gamma_N}(\alpha(h)+\alpha(g))^2\gamma(h)\varphi(g)\\
    &=\sum_{h\in\Gamma_N}\sum_{g\in\Gamma_N}\alpha^2(h)\gamma(h)\varphi(g)+\sum_{h\in\Gamma_N}\sum_{g\in\Gamma_N}\alpha^2(g)\gamma(h)\varphi(g)\\
    &\qquad\qquad+2\sum_{h\in\Gamma_N}\sum_{g\in\Gamma_N}\alpha(h)\alpha(g)\gamma(h)\varphi(g)\,.
    \end{align}
    Therefore, since $\gamma$ and $\varphi$ are symmetric probability measures with bounded support
\begin{equation}
    \sum_{g\in\Gamma_N} \alpha^2(g)\gamma\ast\varphi(g)=\sum_{h\in\Gamma_N}\alpha^2(h)\gamma(h)+\sum_{g\in\Gamma_N}\alpha^2(g)\varphi(g)\,,
\end{equation}
so that \eqref{revdscc} follows by recursion. Similarly, \eqref{revdscc1} follows by computing 
\begin{align}
        \sum_{g\in\Gamma_N} \alpha^4(g)\gamma\ast\varphi(g)&=\sum_{h\in\Gamma_N}\alpha^4(h)\gamma(h)+\sum_{g\in\Gamma_N}\alpha^4(g)\varphi(g)+6\sum_{h\in\Gamma_N}\alpha^2(h)\gamma(h)\sum_{g\in\Gamma_N}\alpha^2(g)\varphi(g)\,.
\end{align}

Now we conclude the proof of the lemma. Take as $\lambda$ a  symmetric probability measure $\lambda$ on $\Gamma_N$ such that $\supp(\lambda)$ contains $e$, is bounded and generates $\Gamma_N$. For example, $\lambda$ can be the  $1/2$-lazy symmetric random walk associated to a symmetric  set of generators of $\Gamma_N$. Notice that $\lambda^{\ast n}$ satisfies the Gaussian bounds \eqref{GaussianBounds} by \cite[Theorem 5.1]{HebischSaloff}. We now take  $C_1>0$ large enough (to have the condition of the last line of \eqref{GaussianBounds}), and we record  that, for every $n\ge 1$,
\begin{equation}
        C^{-1} \lambda^{\ast \lfloor n/C\rfloor}(g)\le\nu^{(n)}(g)\le C\lambda^{\ast \lceil Cn \rceil}(g)\qquad\text{for every $g\in\Gamma_N$ with $|g|_S\le n/C_1$}\,,
\end{equation}
which follows from \eqref{GaussianBounds} applied to both $\lambda^{\ast n}$ and $\nu^{(n)}$. Hence, abbreviating   $B_R^{d_S}(e)$ into $B_R$, using \eqref{revdscc},
\begin{align}
     \sum_{g\in\Gamma_N}\alpha^2(g)\nu^{(n)}(g)&\ge   C^{-1} \sum_{g\in B_{n/C_1}}\alpha^2(g)\lambda^{\ast \lfloor n/C\rfloor}(g)\\&=C^{-1}\sum_{g\in \Gamma_N}\alpha^2(g)\lambda^{\ast \lfloor n/C\rfloor}(g)-C\sum_{g\in \Gamma_N\setminus B_{n/C_1}}\alpha^2(g)\lambda^{\ast \lfloor n/C\rfloor}(g)\\
     &\ge C^{-1}\lfloor n/C\rfloor-(Cn)^2\lambda^{\ast \lfloor n/C\rfloor}(\Gamma_N\setminus B_{n/C_1})\\
     &\ge n/C -Cn^2e^{-n/C}\,
\end{align}
where the bound on $\lambda^{\ast \lfloor n/C\rfloor}(\Gamma_N\setminus B_{n/C_1})$ used in the last inequality is as for Lemma \ref{vfedsc}. Hence, \eqref{revdscc0} follows. The argument for \eqref{revdscc01} is analogous and hence omitted. 
\end{proof}

\subsection{Quotient spaces}
\label{subsec:quotientspaces}

In the proof of our main Theorem \ref{thm:main}, it is convenient, though not strictly necessary, to reduce to the case of virtually nilpotent groups of step two.
The standard way to do this is by quotienting out a suitable term of the isolated lower central series.
To this end, we need to study the induced structure on the quotient.
We do this below in a fairly general setting.

\medskip
Let $\Gamma$ be a finitely generated group, not necessarily virtually nilpotent, with symmetric generating set $S$.
Let $K\unlhd \Gamma$, and denote by $p:\Gamma\to\sfrac{\Gamma}{K}$ the quotient projection.
We often use the shorthand notation $\overline g\defeq p(g)$.

Notice that $\overline S\defeq p(S)$ is a symmetric generating set of $\sfrac{\Gamma}{K}$, which induces the word-length function $|\,\cdot\,|_{\overline S}$ and the associated word distance.
It is an easy exercise to show that this word distance coincides with the orbit distance on the quotient. Indeed, we have
\begin{equation}\label{eq:orbit_quotient}
|\overline g|_{\overline S}=\min_{k\in K}|gk|_S
\qquad \text{for every }g\in \Gamma\,,
\end{equation}
which, together with the normality of $K$ and the left-invariance of $d_S$, implies
\begin{equation}\label{eq:orbit_distance}
d_{\overline S}(\overline g,\overline h)=\min_{k_1,k_2\in K} d_S(gk_1,hk_2)
\qquad \text{for every }g,h\in \Gamma\, .
\end{equation}

Let $\mu$ be a symmetric probability measure on $\Gamma$ whose support is finite, generates $\Gamma$, and contains $e$.
We define $\overline\mu\defeq p_*\mu$, that is,
\begin{equation}
\overline\mu(\overline g)\defeq \sum_{k\in K}\mu(gk)\qquad \text{for }
\overline g\in \sfrac{\Gamma}{K}\,.
\end{equation}
Notice that $\overline\mu$ is a symmetric probability measure on $\sfrac{\Gamma}{K}$ whose support is finite, generates $\sfrac{\Gamma}{K}$, and contains the identity element. Then we set
\begin{equation}
   \overline \mu^{\ast n}_{\overline{g}}(\overline h)
   \defeq \overline \mu^{\ast n}(\overline g^{-1}\overline h)
   \qquad\text{for } 
   \overline g, \overline h \in \sfrac{\Gamma}{K}, \, n\in \NN\,.
\end{equation}

\begin{lemma}\label{dfcsc}
For every $g\in \Gamma$, it holds
    \begin{equation}
        W_1^{d_S}(\mu^{\ast n}_g,\mu^{\ast n})
        \ge W_1^{d_{\bar S}}(\overline \mu^{\ast n}_{\overline{g}},\overline \mu^{\ast n})\qquad\text{for every $n\in\NN$}\,.
    \end{equation}
\end{lemma}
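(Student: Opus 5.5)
The argument rests on two facts. First, $p:(\Gamma,d_S)\to(\sfrac{\Gamma}{K},d_{\overline S})$ is $1$-Lipschitz. Second, $p_*$ commutes with convolution and left translation, so $p_*(\mu^{\ast n}_g)=\overline\mu^{\ast n}_{\overline g}$. Once both are in place, the inequality is the standard fact that $W_1$ does not increase under $1$-Lipschitz push-forwards.

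\emph{Step 1: $p$ is $1$-Lipschitz.} By \eqref{eq:orbit_distance}, taking $k_1=k_2=e$ gives $d_{\overline S}(\overline g,\overline h)\le d_S(g,h)$ for all $g,h\in\Gamma$.

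\emph{Step 2: compatibility with convolution.} We show $p_*(\lambda\ast\eta)=p_*\lambda\ast p_*\eta$ for finitely supported probability measures $\lambda,\eta$ on $\Gamma$. Since $p$ is a homomorphism, $\lambda\ast\eta$ is the law of $XY$ with $X\sim\lambda$ and $Y\sim\eta$ independent. Hence $p(XY)=p(X)p(Y)$ has law $p_*\lambda\ast p_*\eta$. The same computation can be done directly: write $\sum_{k\in K}\sum_h\lambda(h)\eta(h^{-1}gk)$ and regroup $h$ by cosets $hK$, using normality of $K$ to move $k$.

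By induction, $p_*\mu^{\ast n}=\overline\mu^{\ast n}$. Since $p\circ L_g=L_{\overline g}\circ p$, it follows that $p_*\mu^{\ast n}_g=(L_{\overline g})_*\overline\mu^{\ast n}=\overline\mu^{\ast n}_{\overline g}$. In particular $p_*\mu^{\ast n}=\overline\mu^{\ast n}_{\overline e}$.

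\emph{Step 3: comparing transport costs.} Let $\pi$ be an optimal transport plan between $\mu^{\ast n}_g$ and $\mu^{\ast n}$ for $d_S$; it exists because both measures are finitely supported. Then $(p\times p)_*\pi$ is a transport plan between $\overline\mu^{\ast n}_{\overline g}$ and $\overline\mu^{\ast n}$, by Step 2. By Step 1,
\begin{equation}
W_1^{d_{\overline S}}(\overline\mu^{\ast n}_{\overline g},\overline\mu^{\ast n})\le\sum_{g',h'\in\Gamma}d_{\overline S}(\overline{g'},\overline{h'})\,\pi(g',h')\le\sum_{g',h'\in\Gamma}d_S(g',h')\,\pi(g',h')=W_1^{d_S}(\mu^{\ast n}_g,\mu^{\ast n})\,.
\end{equation}

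Alternatively, one can argue by Kantorovich duality: if $\overline\Psi$ is $1$-Lipschitz on the quotient, then $\overline\Psi\circ p$ is $1$-Lipschitz on $\Gamma$. This version, however, needs the nontrivial direction of duality.

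\emph{Main obstacle.} The only point needing real care is the bookkeeping in Step 2, namely checking that push-forward commutes with convolution. Everything else is immediate from \eqref{eq:orbit_distance}.
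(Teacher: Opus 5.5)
Your proposal is correct and follows essentially the same route as the paper. Both identify $p_*\mu^{\ast n}_g=\overline\mu^{\ast n}_{\overline g}$ using that $p$ is a homomorphism, then push forward a transport plan and use \eqref{eq:orbit_distance} to see that $p$ does not increase distances; you simply spell out the convolution bookkeeping that the paper leaves implicit.
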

\begin{proof}
Notice first that 
\begin{equation}
    \overline \mu^{\ast n}_{\overline g}=p_*\mu^{\ast n}_g\qquad\text{for every $n\in\NN$}\,,
\end{equation} where we also used that  $p$ is a homomorphism, as $K$ is normal. 
Also, for every pair of probability measures $\alpha,\beta$ on $\Gamma$ with bounded support,
\begin{equation}
W_1^{d_S}(\alpha,\beta)\ge W_1^{d_{\overline S}}(p_* \alpha,p_* \beta)\,,
\end{equation}
since the pushforward of an admissible plan for $W_1^{d_S}(\alpha,\beta)$ on $\Gamma$ is an admissible plan for $W_1^{d_{\overline S}}(p_*\alpha,p_*\beta)$ on $\sfrac{\Gamma}{K}$, and the quotient distance satisfies \eqref{eq:orbit_distance}. 
\end{proof}

\section{Main results restated}

We state more precise and general versions of Theorems \ref{thm:main}, \ref{vecdscs}, and \ref{cdscs}, which were presented in the introduction.
We then explain how the latter two imply the first one.

\begin{theorem}[General version of Theorem \ref{thm:main}]\label{thm:main:rest}
Let $\Gamma$ be a finitely generated virtually nilpotent group. Then, the following are equivalent
\begin{enumerate}
    \item $\Gamma$ is virtually abelian, 
    \item There exists a finite symmetric set of generators $S$ with $e\notin S$ such that 
    \begin{equation}
    \kappa(g,h)= 1-\frac{W_1^{d_S}(\mu_g,\mu_h)}{d_S(g,h)}\ge 0\qquad\text{for every $g\sim h$}\,,
    \end{equation}
    where $\mu_g$ is the law of the $1/2$-lazy symmetric random walk originating from $g$ with respect to $S$. This is to say that the Cayley graph of $\Gamma$ with respect to $S$ and the $1/2$-lazy symmetric  random walk associated to $S$ has non-negative Ollivier--Ricci curvature.
    \item There exists a finite symmetric set of generators $S$ and a symmetric probability measure $\mu$ such that $\supp(\mu)$ contains $e$, is bounded and generates $\Gamma$, such that
    \begin{equation}
    \kappa(g,h)= 1-\frac{W_1^{d_S}(\mu_g,\mu_h)}{d_S(g,h)}\ge 0\qquad\text{for every  $g\sim h$}\,,
    \end{equation}  
    where $\mu_g= (L_g)_*\mu$. This is to say that the Cayley graph of $\Gamma$ with respect to $S$ and $\mu$ has non-negative Ollivier--Ricci curvature.
\end{enumerate}
\end{theorem}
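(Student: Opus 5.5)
The plan is to prove $(2)\Rightarrow(3)$, then $(3)\Rightarrow(1)$ using the general form of Theorem \ref{vecdscs}, and finally $(1)\Rightarrow(2)$ by constructing a suitable generating set.

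\emph{$(2)\Rightarrow(3)$.} This is immediate. The $1/2$-lazy walk law $\mu$ is symmetric because $S=S^{-1}$. Its support is $S\cup\{e\}$, which is finite, contains $e$ and generates $\Gamma$.

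\emph{$(3)\Rightarrow(1)$.} Suppose $\Gamma$ is not virtually abelian.

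\begin{itemize}
\item First, I upgrade non-negative curvature on edges to all pairs and all times. If $W_1(\mu_g,\mu_h)\le 1$ for all $g\sim h$, then gluing optimal plans along a geodesic and using the triangle inequality for $W_1$ gives $W_1(\mu_g,\mu_h)\le d_S(g,h)$ for all $g,h$.
\item Iterating as in Remark \ref{rem:vfdsc}, by disintegrating optimal plans, yields
\begin{equation}
W_1^{d_S}(\mu^{\ast n}_g,\mu^{\ast n}_h)\le d_S(g,h)\qquad\text{for all $g,h$ and $n$}.
\end{equation}
\item On the other hand, I invoke the general version of Theorem \ref{vecdscs}, which I expect is stated for an arbitrary symmetric $\mu$ with bounded generating support containing $e$. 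It provides $g_n$ with $d_S(e,g_n)\simeq n$ and $W_1(\mu^{\ast n^2},\mu^{\ast n^2}_{g_n})\ge(1+\delta)d_S(e,g_n)$ for large $n$, contradicting the previous bound. Hence $\Gamma$ is virtually abelian.
\end{itemize}

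\emph{$(1)\Rightarrow(2)$.} This is the constructive direction and the main obstacle, since Example \ref{vefdscvc} shows that a poor choice of $S$ fails.

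\begin{itemize}
\item Take $\Gamma_N\cong\ZZ^k$ normal of finite index, and a finite set $F$ of coset representatives.
\item Choose $S$ large and invariant under conjugation by a large set. My first attempt is
\begin{equation}
S=\{\,gsg^{-1}: s\in S_0,\ g\in\Gamma\,\}\setminus\{e\},
\end{equation}
where $S_0$ consists of $F\cup F^{-1}$ together with a basis of $\ZZ^k$ and its inverses.
\item This set is finite. Conjugates of elements of $\ZZ^k$ form finite orbits, because conjugation acts on $\ZZ^k$ through the finite group $\Gamma/\Gamma_N$. Conjugates of elements of $F$ are a problem: $t^k s t^{-k}$ can have infinite orbit, as in Example \ref{infdie}.
\item I therefore abandon full conjugation invariance. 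Instead, I would use $S=\big(B_R(e)\setminus\{e\}\big)$ in some auxiliary word metric, for $R$ large. Alternatively, I would take $S$ to be all elements whose $\Gamma_N$-component lies in a large ball and whose coset component is arbitrary, i.e. $S=\{h\sigma(b): h\in B\subset\ZZ^k,\ b\in\Gamma/\Gamma_N\}$, with $B$ a large symmetric box invariant under the finite linear action.
\end{itemize}

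For $g\sim h$, write $h=gs$. The coupling I would try matches $gt$ with $gst'$, where $t'$ is obtained from $t$ by a bijection $S\to S$ such that $d(gt,gst')\le 1$ for most $t$, and pairs the lazy masses suitably. By transitivity, $W_1(\mu_g,\mu_{gs})$ equals $W_1(\mu_e,\mu_s)$, so only finitely many edges need checking.

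The key step is choosing the bijection $t\mapsto s^{-1}ts$ on the elements where conjugation keeps $t$ inside $S$. This works on a large set because $S$ is a product of a linearly invariant box and all cosets, so conjugation by $s$ moves $t$ only by a bounded translation within $\Gamma_N$. The box boundary contributes a negligible error, which the lazy mass absorbs. Verifying that this bijection keeps the expected displacement at most $1$ is the delicate computation I expect to dominate the proof.
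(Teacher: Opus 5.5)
\textbf{Assessment.} Your arguments for $(2)\Rightarrow(3)$ and $(3)\Rightarrow(1)$ are correct and coincide with the paper's: upgrade edge curvature to $W_1^{d_S}(\mu^{\ast n}_g,\mu^{\ast n}_h)\le d_S(g,h)$ as in Remark~\ref{rem:vfdsc}, then contradict Theorem~\ref{vecdscs:res}.

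\textbf{The gap is in $(1)\Rightarrow(2)$.} You explicitly leave the decisive verification undone, and the mechanism you propose for it is false.

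Conjugation by $s=h'\sigma(b')\in S$ does not move $t=h\sigma(b)\in S$ by a bounded translation inside $\Gamma_N$. When $\sigma(b)$ acts nontrivially on $\Gamma_N$, the $\Gamma_N$-component of $t$ is multiplied by $(h')^{-1}\sigma(b)h'\sigma(b)^{-1}$, up to conjugation by $\sigma(b')$ and bounded terms. This factor is as large as the box. Concretely, write the infinite dihedral group as $\ZZ\rtimes C_2$ as in Subsection~\ref{subsec:proof_remark}. Your set is then $S=\{(a,\epsilon):|a|\le R\}\setminus\{e\}$. For $s=(R,1)$ and $t=(a,1)$ one gets $s^{-1}ts=(2R-a,1)$, which is not in $S$ for every $a<R$, i.e.\ for nearly half of $S$. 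So $t\mapsto s^{-1}ts$ fails on a positive proportion of $S$, not on a thin boundary.

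Nor can errors be ``absorbed by the lazy mass''. You need $W_1^{d_S}(\mu_e,\mu_s)\le 1$ exactly, not up to $o(1)$. Apart from the mass $\frac{1}{2|S|}$ that $\mu_s$ places at $e$, the lazy mass $\frac12$ at $e$ must travel distance at least $1$, so it provides no slack. Every unit moved at cost $\ge 2$ would have to be compensated by mass staying put on the overlap of the two supports. That requires an exact combinatorial matching, which you do not construct. Separately, your $S$ need not be symmetric when the extension does not split; for example, $\Gamma=\ZZ\supseteq\Gamma_N=2\ZZ$.

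\textbf{The missing idea.} It is the construction behind Theorem~\ref{thm:virtabel1:rest}, which the paper uses to prove the stronger bound $W_\infty^{d_S}(\mu_e,\mu_{s_0})\le 1$, so no averaging is needed. Your first attempt was close. Full conjugation invariance is impossible, but it is only needed inside $\Gamma_N$, where conjugacy classes are finite because $\Gamma_N$ is abelian, normal and of finite index.

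The paper takes $S=(B^D\cup\bigcup_{b\ne e}U_b)\setminus\{e\}$, where:
\begin{itemize}
\item $B\subseteq\Gamma_N$ is finite, symmetric, generating, and invariant under conjugation by all of $\Gamma$;
\item each nontrivial coset contributes only a fixed two-element set $U_b$, with $U_b^{-1}=U_{b^{-1}}$.
\end{itemize}
For $s_0\in B^D$, the map $t\mapsto s_0t$ moves each point by $|t^{-1}s_0t|_S=1$.

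For $s_0\in U_b=\{s_0,\bar s_0\}$, one uses the explicit bijection
\begin{itemize}
\item $e\mapsto s_0$;
\item $t\mapsto ts_0$ on $B^D\setminus\{e\}$;
\item $s_0\mapsto s_0\bar s_0^{-1}$ and $\bar s_0\mapsto e$;
\item $U_c\ni t\mapsto s_0\theta_{c,b^{-1}c}(t)$ for $c\notin\{e,b\}$.
\end{itemize}
The finitely many elements $t^{-1}s_0\theta_{c,b^{-1}c}(t)\in\Gamma_N$ are fixed before $D$ is chosen. Hence they lie in $B^D$ once $D$ is large, and every point moves by at most $1$.

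In short: keep the coset part of $S$ fixed and small, and let the conjugation-invariant part $B^D$ grow until it absorbs all the error terms.
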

\begin{theorem}[General version of Theorem \ref{vecdscs}]\label{vecdscs:res}
Let $\Gamma$ be a finitely generated virtually nilpotent group. Let $S\subseteq\Gamma$ be a finite symmetric set of generators,  and let $\mu$  be a symmetric probability measure such that $\supp(\mu)$ contains $e$, is bounded and generates $\Gamma$.
Assume that $\Gamma$  is  not virtually abelian.  Then, there exists a sequence $(g_n)_n\subseteq\Gamma$ with 
    \begin{equation}
        \lim_{n\rightarrow\infty}\frac{|g_n|_S}{n}\in (0,\infty)
    \end{equation}such that
    \begin{equation}
        \liminf_{n\rightarrow\infty} \frac{W_1^{d_S}(\mu^{\ast n^2}_{g_n},\mu^{\ast n^2}_e)}{|g_n|_S}>1\,.
    \end{equation}
\end{theorem}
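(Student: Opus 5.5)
We plan to follow the strategy of Subsection \ref{sec:strategy}, working with the projected measures $\nu^{(n^2)}$ on $\Gamma_N$. By \eqref{brfdv}, replacing $\mu^{\ast n^2}_g$ by $\nu^{(n^2)}_g$ costs only an additive $2C_\sigma$, which is negligible after dividing by $|g_n|_S\simeq n$. So it suffices to find $g_n\in\Gamma_N$ with $|g_n|_S\simeq n$ and
\[
W_1^{d_S}(\nu^{(n^2)}_{g_n},\nu^{(n^2)})\ge (1+\delta)|g_n|_S .
\]

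\textbf{Reduction to step two.} Since $\Gamma$ is not virtually abelian, $\Gamma_N$ is not abelian. Let $K\defeq\Gamma_{N,3}\unlhd\Gamma$ and pass to $\sfrac{\Gamma}{K}$. By Lemma \ref{dfcsc}, $W_1$ can only decrease under the quotient, while $|\overline{g}|_{\overline S}\le|g|_S$. So I will instead pick $g_n$ directly with $|g_n|_S\le C|\overline{g_n}|_{\overline S}$, for instance lifting along a fixed element of $\Gamma_N$. The quotient $\Gamma_N/K$ is torsion free of step exactly two, since $A_2\ne0$ because $\Gamma_N$ is non-abelian and torsion free. From now on assume $\Gamma_{N,3}=\{e\}$, and write elements of $\Gamma_N$ through $\pi_1$ and $A_2$ coordinates, as in a Malcev basis.

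\textbf{Choice of direction and potential.} Non-abelianity gives $x,z\in\Gamma_N$ with $[x,z]$ of infinite order in $A_2$. Pick a linear functional $\beta$ on $A_2\otimes\RR$ with $\beta(\pi_2[x,z])\ne0$, and let $\alpha$ be the linear functional on $A_1\otimes\RR$ with $\alpha(v)=\beta(\pi_2[\cdot,z])$. That is, $\alpha$ is the bilinear commutator pairing against $z$, and it is nonzero because $\alpha(\pi_1 x)\ne0$.

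Set $g_n\defeq z^{m_n}$, with $m_n$ chosen so that $|g_n|_S\simeq n$; by subadditivity the limit of $|z^m|_S/m$ exists and is positive. The main first-order term is $\ell(g)\defeq \lim_k|z^{-k}g|\dots$. More concretely, I will use an auxiliary norm, namely the asymptotic (Pansu or stable) norm of Subsection \ref{subsec:auxiliary_distance}, and a $1$-Lipschitz linear functional $\ell$ on $A_1\otimes\RR$ dual to $\pi_1 z$ (a supporting hyperplane of the stable-norm ball at $\pi_1 z/\|\pi_1 z\|$). This gives $\ell(\pi_1 g_n)=\|g_n\|_{\rm stable}\approx|g_n|_S$.

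As the potential take
\[
\Psi\defeq \ell\circ\pi_1-\gamma_n\,\chi\big(\alpha(\pi_1 g)\,\beta(\tilde c(g))\big),
\]
where $\tilde c$ is a suitable second-layer coordinate and $\chi$ is a cutoff. The computation \eqref{bgrfdv} then gives the gain
\[
\gamma_n\, m_n \sum_g \alpha^2(\pi_1 g)\,\nu^{(n^2)}(g)\gtrsim \gamma_n\, n\cdot n^2 ,
\]
using \eqref{revdscc0}. So $\gamma_n=\gamma/n^2$ should yield a relative gain $\delta\simeq\gamma$.

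\textbf{Lipschitz control (main obstacle).} The hard step is making $\Psi$ $1$-Lipschitz for $d_S$, uniformly in $n$, while losing only a small fraction of the gain. I would truncate to the region $|\alpha\circ\pi_1|\le Hn$, $|\beta\circ\pi_2|\le Hn^2$. There the gradient of the correction term is of order $\gamma_n(Hn^2+Hn\cdot n)\simeq\gamma H$ in first-layer directions; second-layer steps cost only $O(1)$ in word length per unit of $n$-scale.

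The difficulty is that $\ell$ is exactly $1$-Lipschitz, so there is no slack. I would therefore use the distance-comparison from Subsection \ref{subsec:auxiliary_distance}, namely $d_S\ge(1-\epsilon)d_\infty-C$ at scale $n$, and rescale: replace $\Psi$ by $(1-\eta)\Psi$ and absorb the $O(\gamma H)$ gradient using sublinearity effects. Alternatively I would use a $1$-Lipschitz function built as a $\min$ or $\max$ of the two terms, which is automatically Lipschitz.

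Outside the truncation region, Lemma \ref{vfedsc} together with the fourth-moment bound \eqref{revdscc01} and Cauchy--Schwarz makes the tail contribution at most $Ce^{-H^2/C}n$. Choosing $H$ large and then $\gamma$ small with $\gamma H\ll\eta\ll\gamma$ is the delicate balance. If this fails as stated, I would take $g_n$ along the bad direction $x$ instead, as in the Heisenberg model.

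Finally, conclude by the easy half of Kantorovich duality \eqref{vfedcs}.
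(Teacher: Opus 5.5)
You have the right architecture: work with $\nu^{(n^2)}$, reduce to step two, test against $\ell\circ\pi_1$ plus a correction $\alpha(\pi_1 g)\beta(g)$, cut off, and control tails by Lemmas \ref{vfedsc} and \ref{scacs}. The gap is the step you yourself flag as the main obstacle: you leave it open, and the remedies you propose cannot close it.

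\textbf{Why rescaling fails.} In your normalization $\gamma_n=\gamma/n^2$, the correction improves the ratio by about $c\gamma$, where $c\approx n^{-2}\sum_g\alpha^2(\pi_1 g)\nu^{(n^2)}(g)$ is a fixed constant; this is an average over the bulk, at scale $n$. Its worst single-step increment on the truncation region is of order $\gamma H$, as you computed; this is a supremum at scale $Hn$. Multiplying by $(1-\theta)$ to restore the $1$-Lipschitz property needs $\theta\gtrsim\gamma H$ and costs $\theta$ in the ratio. The net effect is therefore negative once $H$ is large enough to make the tails negligible; your requirement $\gamma H\ll\eta\ll\gamma$ is inconsistent for $H\ge1$. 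Sublinear comparisons such as $d_S\ge(1-\eps)d_\infty-C$ give no slack at the level of single steps, which is what the Lipschitz condition sees. A max or min of $\ell$ with a term that is not $1$-Lipschitz is not $1$-Lipschitz either.

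\textbf{Why an arbitrary direction fails.} With $z$ arbitrary, $\pi_1(z)/|\pi_1(z)|_E$ may lie in the relative interior of a positive-dimensional face of $E$. Then every supporting functional is tight along all vertices of that face, while the correction is invariant only along $z$, and no $\gamma>0$ works. For instance, in $H_3(\ZZ)$ with $S=\{x_1^{\pm1},x_2^{\pm1}\}$ and $z=x_1x_2$, one must take $\ell=a_1+a_2$, which increases by exactly $1$ along $x_1$. Meanwhile $\alpha(\pi_1 x_1)\ne0$ and $\beta(gx_1)$ takes both signs at size $n^2$. So $|\Psi(gx_1)-\Psi(g)|>1=d_S(g,gx_1)$ for suitable $g$.

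\textbf{The missing idea: exact slack.} This dictates both the direction and the metric.
\begin{enumerate}
\item The paper translates along a vertex $v_0\in\Ext(E)$ with $[v_0,\,\cdot\,]_{\Gamma_N}\ne0$. It exists because $E$ is a polytope whose finitely many extreme points span $A_1\otimes_\ZZ\RR$ (Lemma \ref{cdescdsacsc}) and the bracket is nonzero (Lemma \ref{dscc}).
\item It chooses $\ell$ to expose $v_0$ strictly, $|\ell|<1-\eps_0$ on $\Ext(E)\setminus\{\pm v_0\}$, so every step not parallel to $v_0$ has a fixed positive slack.
\item Along $\pm v_0$ the correction must be exactly invariant, and this is not available with $d_S$. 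Elements of $S$ need not lie in $\Gamma_N$, and the words realizing $v_0$ are cycles $t_0=u_0z_0$ through other cosets with a central part $z_0$. Hence $\beta(gt_0^k)-\beta(g)=k\beta(z_0)$, which has both signs after multiplying by $\alpha(\pi_1 g)$.
\item The paper therefore uses the weighted word metric $d_\omega$, generated by the horizontal parts $u_i$ and a Malcev basis whose last element $x_{n_1}$ is aligned with $v_0$. Then $\alpha(\pi_1u_0)=0$ and $\beta(gu_0^{\pm1})=\beta(g)$, so $\Psi(gu_0^{\pm1})-\Psi(g)=\pm\omega(u_0)$ exactly. The correction $\frac{\eta}{R^2}\alpha(\pi_1g)\beta(g)$ has $O(\eta)$ increments on $B_R$, which the slack absorbs (Lemma \ref{LipLem}).
\item With $R=2Hn$ the gain is of order $\eta/H^2$, positive with no rescaling loss. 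It beats the tail cost $CHe^{-H^2/C}$ for $H$ large (Proposition \ref{KeyLemma}).
\item Sublinearity enters only at the end, to pass from $d_\omega$ to $d_S$: $|g|_S$ and $|g|_\omega$ differ by $O(|g|_S^{3/4})$ (Proposition \ref{vdfvsd}).
\end{enumerate}

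\textbf{Minor point.} In the step-two reduction you need a lift with $|g_n|_S=|\overline g_n|_{\overline S}$, which \eqref{eq:orbit_quotient} provides. A bound $|g_n|_S\le C|\overline g_n|_{\overline S}$ with $C>1$ would destroy the strict inequality.
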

\begin{theorem}[General version of Theorem  \ref{cdscs}]\label{cdscs:rest}
     Let $\Gamma$ be a finitely generated virtually abelian group. Let $S\subseteq\Gamma$ be a finite symmetric set of generators,  and let $\mu$  be a symmetric probability measure such that $\supp(\mu)$ contains $e$, is bounded and generates $\Gamma$. Then,
    \begin{equation}
        \big|W_1^{d_S}(\mu^{\ast n}_e,\mu^{\ast n}_g)- |g|_S\big|\le C\qquad\text{for every $n\in\NN$ and $g\in\Gamma$}\,,
    \end{equation}
    where $C$ depends only on $\Gamma$, $S$ and $\mu$.
\end{theorem}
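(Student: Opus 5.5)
We prove Theorem \ref{cdscs:rest} by estimating $W_1^{d_S}(\mu^{\ast n}_e,\mu^{\ast n}_g)$ from above and from below by $|g|_S$, each up to a bounded error. In both directions we reduce to the abelian finite-index subgroup. So let $\Gamma_N\unlhd\Gamma$ be abelian, torsion free and of finite index, with $\Gamma_N\cong\ZZ^k$, as in Subsection \ref{subsec:virtually}. By \eqref{csdcdcsc} we may replace $g$ by $g_N$ at cost $C_\sigma$. By \eqref{brfdv} we may replace $\mu^{\ast n}_h$ by $\nu^{(n)}_h$, for $h\in\Gamma_N$, at cost $C_\sigma$ in $W_1$. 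It therefore suffices to show
\begin{equation}
\big|W_1^{d_S}(\nu^{(n)}_e,\nu^{(n)}_{g})-|g|_S\big|\le C\qquad\text{for every } g\in\Gamma_N\,.
\end{equation}

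\emph{Upper bound.} Since $\Gamma_N$ is abelian, left translation by $g$ on $\Gamma_N$ coincides with right translation. The plan $\pi=(\mathrm{id},L_g)_*\nu^{(n)}$ couples $\nu^{(n)}_e$ with $\nu^{(n)}_g$. Its cost is
\begin{equation}
\sum_{h\in\Gamma_N}d_S(h,gh)\,\nu^{(n)}(h)=\sum_{h\in\Gamma_N}|h^{-1}gh|_S\,\nu^{(n)}(h)=|g|_S\,,
\end{equation}
because $h^{-1}gh=g$ in $\Gamma_N$. Hence $W_1\le|g|_S$ exactly on $\Gamma_N$, and the reductions above add only a constant.

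\emph{Lower bound.} By Kantorovich duality it suffices to find a $1$-Lipschitz function $\Psi$ on $(\Gamma,d_S)$ with
\begin{equation}
\sum_{h}\Psi(h)\big(\nu^{(n)}_g(h)-\nu^{(n)}(h)\big)\ge |g|_S-C\,.
\end{equation}
We use the asymptotic structure of $d_S$ on $\Gamma_N\cong\ZZ^k$. By Burago's theorem (Burago 1992), in the version for finite-index abelian subgroups of groups acting cocompactly, there is a norm $\|\cdot\|$ on $\RR^k$ such that $\big||h|_S-\|h\|\big|\le C$ for every $h\in\Gamma_N$. I expect this step to be the main obstacle; in the paper it is presumably the content of Subsection \ref{subsec:auxiliary_distance}. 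If only an asymptotic $o(|h|)$ version is available, I would prove the $O(1)$ bound by a subadditivity argument in the spirit of Burago: $|h|_S$ is subadditive up to a constant and invariant, up to a constant, under conjugation by the finite set $\sigma(\GGN)$.

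Given the norm, choose a dual linear functional $\alpha$ with $\alpha(g)=\|g\|$ and $|\alpha(v)|\le\|v\|$ for all $v$. Then $\alpha$ is $1$-Lipschitz for $\|\cdot\|$ but only $(1+C/d)$-Lipschitz for $d_S$ at scale $d$, which is not good enough. Set instead
\begin{equation}
\Psi(x)\defeq\inf_{h\in\Gamma_N}\big(\alpha(h)+d_S(h,x)\big)\qquad\text{for }x\in\Gamma\,.
\end{equation}
This $\Psi$ is $1$-Lipschitz for $d_S$. It is finite and satisfies $|\Psi-\alpha|\le C$ on $\Gamma_N$, since $\alpha(h')-\alpha(h)\le\|h'-h\|\le d_S(h,h')+C$. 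Moreover $\Psi(gx)-\Psi(x)=\alpha(g)$ exactly for $x\in\Gamma_N$: translating $h\mapsto gh$ in the infimum shifts $\alpha$ by $\alpha(g)$ and preserves $d_S$ by left invariance. Therefore
\begin{equation}
\sum_{h}\Psi\,d\nu^{(n)}_g-\sum_{h}\Psi\,d\nu^{(n)}=\alpha(g)=\|g\|\ge|g|_S-C\,.
\end{equation}
This gives the lower bound and completes the plan.
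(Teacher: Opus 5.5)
There is a genuine gap in your first reduction, ``by \eqref{csdcdcsc} we may replace $g$ by $g_N$ at cost $C_\sigma$'', and it hides the main difficulty of the theorem. The bound $d_S(g,g_N)\le C_\sigma$ controls $\big||g|_S-|g_N|_S\big|$. It also controls the Wasserstein distance between \emph{right} translates of $\mu^{\ast n}$. But $\mu^{\ast n}_g$ is a \emph{left} translate. Write $g=g_N\sigma(b)$. By left invariance of $d_S$, what you actually need is
$W_1^{d_S}(\mu^{\ast n}_{g_N},\mu^{\ast n}_g)=W_1^{d_S}(\mu^{\ast n}_e,\mu^{\ast n}_{\sigma(b)})\le C$, uniformly in $n$.

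The natural plan $h\mapsto\sigma(b)h$ costs $\sum_h|h^{-1}\sigma(b)h|_S\,\mu^{\ast n}(h)$. This is unbounded as soon as $\sigma(b)$ has an infinite conjugacy class. For the infinite dihedral group $\ZZ\rtimes C_2$ of Subsection \ref{subsec:proof_remark} and $s=(0,1)$, one has $|h^{-1}sh|_S=1+2|a(h)|$, so this cost grows like $\sqrt n$; this is exactly the phenomenon of Remark \ref{rempianofurbo}. Consequently both of your bounds are proved only for $g\in\Gamma_N$. For the cosets $\Gamma_N\sigma(b)$ with $b\ne e$, neither the translation plan nor your potential $\Psi$ gives anything, since $\Psi(gx)-\Psi(x)=\alpha(g)$ holds only for $g\in\Gamma_N$.

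The missing ingredient is $\sup_n W_1^{d_S}(\mu^{\ast n}_e,\mu^{\ast n}_{s_0})<\infty$ for each fixed $s_0\in\Gamma$, and this is the heart of the paper's proof. The paper proceeds as follows.
First, it replaces $\mu$ by $\mu^{\ast p}$. This is harmless because $W_1^{d_S}(\mu^{\ast n}_e,\mu^{\ast (n+1)}_e)\le C$, and it ensures that every step charges each coset of $\Gamma_N$ with probability at least some $\alpha>0$.
Second, it runs the walks from $e$ and from $s_0$ with independent increments until their projections to $\Gamma/\Gamma_N$ coincide. This happens at a stopping time $\tau$ dominated by a geometric variable, and afterwards the two walks use the same increments.
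Third, since $\Gamma_N$ is abelian, the $\Gamma_N$-valued discrepancy built up before $\tau$ is afterwards only conjugated by one of the finitely many $\sigma(b)$. Hence $d_S(X^n,Y^n)\le C+C\tau$ and $\mathbb E\,d_S(X^n,Y^n)\le C+C/\alpha$.

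With this lemma in hand, your treatment of $g\in\Gamma_N$ goes through and is essentially the paper's. The step you flagged as the main obstacle is actually the easy one and needs no Burago theorem. By \eqref{eq:E-S} and Lemma \ref{lift}, $|h|_E\le|h|_S\le|h|_E+C$ on the abelian $\Gamma_N$. A norming functional for $|\cdot|_E$ is then already $1$-Lipschitz for $d_S$ on $\Gamma_N$, so your inf-convolution, though correct, is unnecessary.
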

\begin{remark}\label{rempianofurbo}

   One may wonder whether, in the proof of Theorem \ref{cdscs:rest}, the trivial transport map $h\mapsto gh$ is sufficient to estimate the Wasserstein distance, since $\Gamma$ is virtually abelian.
Surprisingly, this is already false for the infinite dihedral group of Example \ref{infdie}: even at large scales, this transport map is far from optimal.
We refer to Subsection \ref{subsec:proof_remark} for the detailed statement and proof.
\end{remark}

In view of the following Theorem, we recall the definition of $\infty$-Wasserstein distance with respect to $d_S$, i.e., for probability measures $\mu,\nu$,
\begin{equation}
    W_\infty^{d_S}(\mu,\nu)\defeq \inf_{\pi} \sup_{\pi(g,h)>0} d_S(g,h)\,,
\end{equation}
where the infimum is taken among all transport plans $\pi$ for $(\mu,\nu)$. As the transport plans $\pi$ are probability measures, it follows immediately that $W_1^{d_S}\le W_\infty^{d_S}$. We remark also that in \cite[Problem P]{ollivier2010survey}, Ollivier proposed the notion of sectional curvature by using $W_\infty$ instead of $W_1$ in the definition of $\kappa$. Hence, in this language, \eqref{efrvcrdscf}  below means ``non-negative Ollivier--Sectional curvature".
\begin{theorem}\label{thm:virtabel1:rest}
    Let $\Gamma$ be a finitely generated virtually abelian group. Then, there exists a finite symmetric set of generators $S$ with  $e\notin S$ such that, if $ \mu$ denotes the $1/2$-lazy symmetric random walk associated to $ S$, it holds that 
    \begin{equation}\label{efrvcrdscf}
        W_\infty^{d_{ S}}(\mu^{\ast n}_e,\mu^{\ast n}_g)\le |g|_S\qquad\text{for every $n\in\NN$ and $g\in\Gamma$}\,.
    \end{equation}
\end{theorem}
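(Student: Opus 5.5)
The plan is to reduce \eqref{efrvcrdscf} to a one-step, one-generator statement, and then to build $S$ so that this one-step statement holds through an explicit bijective matching of neighbourhoods.

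\emph{Step 1 (reduction).} I would first show the following claim. Suppose that for every $t\in S$ there is a coupling of $\mu_e$ and $\mu_t$ supported on pairs at $d_S$-distance $\le 1$, i.e.\ $W_\infty^{d_S}(\mu_e,\mu_t)\le 1$. Then \eqref{efrvcrdscf} holds.

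By left invariance, $W_\infty^{d_S}(\mu_g,\mu_{gt})\le 1$ for all $g\in\Gamma$ and $t\in S$. For general $g$, write $g=t_1\cdots t_k$ with $k=|g|_S$ and glue the couplings along the geodesic $e,t_1,t_1t_2,\dots$. Since $W_\infty$ is a metric, the triangle inequality gives $W_\infty^{d_S}(\mu_e,\mu_g)\le |g|_S$; the same holds for $\mu_h,\mu_{h'}$ with $d_S(h,h')$ in place of $|g|_S$.

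For $n$ steps, I would argue exactly as in Remark \ref{rem:vfdsc}, replacing expectations by suprema of the support. Take an optimal $W_\infty$-coupling at step $n$, and conditionally on $(h,h')$ sample an optimal $W_\infty$-coupling of $(\mu_h,\mu_{h'})$. Every pair in the support of the resulting step-$(n+1)$ coupling then has distance $\le d_S(h,h')\le W_\infty^{d_S}(\mu^{\ast n}_e,\mu^{\ast n}_g)$. Induction gives the claim.

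\emph{Step 2 (the one-step condition as a combinatorial condition).} The measure $\mu_e$ has mass $\tfrac12$ at $e$ and mass $\tfrac1{2|S|}$ at each $s\in S$, while $\mu_t$ has mass $\tfrac12$ at $t$ and mass $\tfrac1{2|S|}$ at each $ts$. I would match $e\leftrightarrow t$, which costs distance $1$. It then suffices to find a bijection $\sigma_t:S\to S$ with
\[
s^{-1}t\,\sigma_t(s)\in S\cup\{e\}\qquad\text{for every }s\in S\,.
\]
If $S$ is conjugation invariant, $\sigma_t=\mathrm{id}$ works, since then $s^{-1}ts\in S$. This is the mechanism behind Example \ref{verdsc}. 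By Example \ref{infdie}, however, such an $S$ need not exist, so we need genuinely non-trivial $\sigma_t$. In the infinite dihedral group with $S=\{s,t,t^{-1}\}$, a computation shows that one can take $\sigma_s(t^{\pm1})=t^{\mp1}$ and $\sigma_s(s)=s$, and this is the model I would generalize.

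\emph{Step 3 (choice of $S$; main obstacle).} Let $A\unlhd\Gamma$ be a torsion-free abelian normal subgroup of finite index, $A\cong\ZZ^k$, and write $F=\Gamma/A$. Conjugation by $\Gamma$ acts on $A$ through the finite group $F$. Hence I can choose a finite symmetric generating set $T$ of $A$ that is invariant under this action; for instance, take the $F$-orbit of a basis and of its inverses. For $t\in T$, every conjugate $s^{-1}ts$ lies in $T$, so $\sigma_t=\mathrm{id}$ works.

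The difficulty is with generators outside $A$. I would add a finite set $R$ of coset representatives for $F\setminus\{e\}$, and close it up as $S=T\cup R T\cup TR$, together with inverses, enlarged by finitely many $A$-translates chosen so that $S$ is stable under conjugation by elements of $T$. Then, for $t\in R$, I would define $\sigma_t$ by sending $s=ra$ (with $r\in R$, $a\in T$) to an element of the form $t^{-1}r'a'$, where $a'$ is obtained from $a$ by the linear action of $t$ on $A$. The goal is that $s^{-1}t\sigma_t(s)$ collapses to a single element of $S$, exactly as $t^{-1}st^{-1}=s$ does in the dihedral case.

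Verifying that such $\sigma_t$ are well-defined bijections of $S$ is the main obstacle, and it is where I expect the finite closure of $S$ to require care. I would first try to make $S$ invariant under the maps $s\mapsto t s^{-1}t^{-1}$, or an analogous involution, for $t\in S$. Because $A$ is abelian and $F$ is finite, only finitely many constraints arise, so a finite saturation of $S$ should terminate. Finally, I would remove $e$ from $S$ if it appears.
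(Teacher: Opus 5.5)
\textbf{Steps 1 and 2 are correct and match the paper.} The paper also reduces to $n=1$ and $g=s_0\in S$. It then builds a map $\Phi\colon S\cup\{e\}\to\Gamma$ with $\Phi(e)=s_0$, $\Phi(S)=s_0S$ bijectively, and $d_S(x,\Phi(x))\le 1$. This is exactly your $\sigma_{s_0}$, with $\Phi(s)=s_0\sigma_{s_0}(s)$. Your treatment of generators in $T$ is the paper's case $s_0\in B^D$.

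\textbf{The gap is Step 3.} It contains all the content of the theorem and is not carried out, and the mechanism you suggest for it cannot work.
\begin{itemize}
\item \emph{Stability under conjugation by $T$ is impossible.} If $r\notin A$ and $\phi_r(a)\defeq r^{-1}ar$, then $a^{-1}ra=r\,\phi_r(a)^{-1}a$. So the $A$-conjugacy class of $r$ is $r$ times the image of the homomorphism $a\mapsto\phi_r(a)^{-1}a$, which is infinite unless $r$ centralizes $A$. A finite set stable under conjugation by the elements of $T$ is stable under conjugation by $\langle T\rangle=A$. Hence no finite $S$ with the property you require exists in general. For instance, in the infinite dihedral group $t^{-k}st^{k}=st^{2k}$, which is precisely the obstruction of Example~\ref{infdie}.
\item \emph{The other saturation also fails.} The maps $x\mapsto\tau x^{-1}\tau^{-1}$ have the same problem: for $\tau=t^k$ they send a reflection $st^j$ to $st^{j-2k}$, so the closure never terminates.
\item \emph{The design $S=T\cup RT\cup TR$ is circular.} The part of $S$ outside $A$ grows with $T$. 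Enlarging $T$ to absorb the discrepancies $s^{-1}t\sigma_t(s)\in A$ therefore creates new generators outside $A$, and hence new discrepancies.
\end{itemize}

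\textbf{The missing idea is to decouple the two parts of $S$ and choose them in the right order.}
\begin{itemize}
\item First fix, independently of everything else, exactly two elements $U_b\subseteq\pi^{-1}(b)$ in each nontrivial coset $b\in\Gamma/\Gamma_N$, with $U_b^{-1}=U_{b^{-1}}$. Also fix bijections $\theta_{c,c'}\colon U_c\to U_{c'}$.
\item Only then set $S=(B^D\cup\bigcup_b U_b)\setminus\{e\}$. Here $B\subseteq\Gamma_N$ is finite, symmetric, conjugation invariant and generating, and $D$ is chosen last.
\end{itemize}
For $s_0\in U_b=\{s_0,\bar s_0\}$, the matching swaps the large coset with the coset $b$:
\begin{itemize}
\item $e\mapsto s_0$, and $x\mapsto xs_0\in s_0B^D$ for $x\in B^D\setminus\{e\}$. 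The displacement is $s_0$, and $xs_0\in s_0B^D$ by conjugation invariance of $B^D$.
\item $s_0\mapsto s_0\bar s_0^{-1}$ and $\bar s_0\mapsto e$. Both displacements equal $\bar s_0^{-1}\in U_{b^{-1}}$.
\item $x\mapsto s_0\theta_{c,b^{-1}c}(x)$ for $x\in U_c$, $c\notin\{e,b\}$. These displacements lie in $\Gamma_N$ and form a finite set fixed before $D$ is chosen, so they lie in $B^D$ once $D$ is large.
\end{itemize}
Having the same number of generators in every nontrivial coset is what makes the cardinalities in this bijection match. The independence of the non-$\Gamma_N$ part of $S$ from $D$ is what removes the circularity your saturation runs into.
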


\subsection{Proof of Theorem \ref{thm:main:rest} given Theorems \ref{vecdscs:res} and \ref{thm:virtabel1:rest}}
$(1)\Rightarrow (2)$ follows from Theorem~\ref{thm:virtabel1:rest}, as $W_1^{d_S}(\mu^{\ast n}_e,\mu^{\ast n}_g)\le W_\infty^{d_S}(\mu^{\ast n}_e,\mu^{\ast n}_g)$. $(2)\Rightarrow (3)$ is clear. $(3)\Rightarrow(1)$ follows from Theorem~\ref{vecdscs:res} with Remark \ref{rem:vfdsc}. Notice that, even though Remark \ref{rem:vfdsc} was originally introduced only for the laws of the $1/2$-lazy symmetric random walks associated to $S$, the argument carries over  \textit{verbatim} for any measure $\mu$ as the ones considered.
\qed
\subsection{Transitive graphs}
 As we did for Cayley graphs, we state the general versions of our results for Markov kernels instead of random walks. As discussed before, this is an inessential difference. For Cayley graphs we  used the notation $\mu$ for kernels. Here, we use the standard notation $P$ for  transition matrices. Notice that, for a Cayley graph, a Markov kernel as in the statement of Theorem \ref{notationsimplify}  below corresponds to a kernel $\mu$. As in Subsection \ref{veadscsdc}, we naturally have  a notion of Ollivier--Ricci curvature associated to these Markov kernels,  namely
 \begin{equation}
     \kappa(x,y)\defeq 1-\frac{W_1(P(x,\,\cdot\,),P(y,\,\cdot\,))}{d(x,y)}\qquad\text{for }x\ne y\,.
 \end{equation}
    \begin{theorem}[General version of Theorem \ref{thm:verttrans}]
\label{notationsimplify}
	Let $G=(V,E)$ be a connected graph of polynomial growth and assume that we have $\Gamma\le \Aut(G)$ which acts transitively. Let $P(\,\cdot\,,\,\cdot\,):V\times V\rightarrow[0,1]$ be an irreducible symmetric Markov kernel which is diagonally invariant for $\Gamma$, is lazy and is compactly supported, more precisely,
	\begin{align}
		&P( x, y)=P(y,x)\qquad&&\text{for every $x,y\in V$}\,,\\
		&P(\gamma x,\gamma y)=P(x,y)\qquad&&\text{for every $\gamma\in\Gamma$ and $x,y\in V$}\,,\\
		&P(x,x)>0\qquad&&\text{for one (and hence any) $x\in V$}\,,\\
		&\{y:P(x,y)>0\}\text{ is finite}\qquad&&\text{for one (and hence any) $x\in V$}\,.
	\end{align}
	
	Assume  that $G$ has non-negative Ollivier--Ricci curvature with respect to $P$. Then $G$ is quasi-isometric to $\mathbb{Z}^k$ for some $k\in\mathbb{N}$.
\end{theorem}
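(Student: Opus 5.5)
The plan follows the sketch given after Theorem \ref{thm:verttrans}: reduce to a Cayley graph of a group of polynomial growth, transfer non-negative curvature to large scales, and then invoke Theorem \ref{vecdscs:res}.

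\textbf{Step 1: pass to a discrete, locally finite group.} Since $G$ is connected, transitive and of polynomial growth, the results of Trofimov \cite{MR735714, MR811571} give a finite-block quotient. More precisely, there is a $\Gamma$-invariant partition of $V$ into finite blocks of uniformly bounded size such that the quotient graph $\overline G$ is locally finite and transitive. Moreover, the kernel of the action on the blocks is such that the image $\overline\Gamma$ of $\Gamma$ in $\Aut(\overline G)$ has finite vertex stabilizers. In particular, $\overline G$ is quasi-isometric to $G$. By Sabidussi \cite{Sabidussi}, $\overline G$ is then quasi-isometric to a Cayley graph of a finitely generated group $\Lambda$. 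One can take $\Lambda$ to be $\overline\Gamma$ itself, whose action has finite stabilizers, so that by Švarc--Milnor $\Lambda$ is quasi-isometric to $G$. In particular $\Lambda$ has polynomial growth, hence is virtually nilpotent by Gromov \cite{MR623534}.

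\textbf{Step 2: transport the Markov kernel and curvature to $\Lambda$.} Fix a base vertex $o$ and its block $\overline o$. Define the kernel on $\Lambda$ by
\begin{equation}
\mu(\lambda)\defeq \frac{1}{|\mathrm{Stab}(\overline o)|}\sum_{\lambda'\in\lambda\,\mathrm{Stab}(\overline o)} \overline P(\overline o,\lambda'\overline o)\,,
\end{equation}
suitably symmetrized. Here $\overline P$ is the pushforward of $P$, which is well defined on blocks by $\Gamma$-invariance. This $\mu$ is symmetric, lazy and finitely supported, and it generates $\Lambda$ by irreducibility.

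Next, use a $\Lambda$-invariant distance $d_\Lambda(\lambda,\lambda')\defeq d(\lambda o,\lambda' o)$. This is only a pseudo-distance, and it differs from a word metric $d_S$ by a quasi-isometry, not an isometry. For that reason I will not claim $\kappa\ge0$ on the Cayley graph. I will only prove large-scale non-negativity up to additive errors.

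The argument for the large-scale statement is the iteration of Remark \ref{rem:vfdsc}, performed directly on $V$. It gives
\begin{equation}
W_1^{d}\big(P^n(x,\cdot),P^n(y,\cdot)\big)\le d(x,y)\qquad\text{for all } x,y,n\,.
\end{equation}
Lifting the walk on $V$ to a walk on $\Lambda$ through orbit maps, which is well defined up to bounded error coming from the finite stabilizers and the bounded blocks, yields
\begin{equation}
W_1^{d_\Lambda}(\mu^{\ast n}_e,\mu^{\ast n}_\lambda)\le d_\Lambda(e,\lambda)+C\,.
\end{equation}

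\textbf{Step 3: conclude via Theorem \ref{vecdscs:res}.} Theorem \ref{vecdscs:res} is stated for $d_S$, and this is the main obstacle: $d_\Lambda$ is not a word metric, so the theorem cannot be applied verbatim. I would first try to choose $S$ so that $d_\Lambda$ is exactly a word metric. Take $S\defeq\{\lambda:d(o,\lambda o)\le1\}$. Then $d_S(\lambda,\lambda')$ equals $d(\lambda o,\lambda' o)$ whenever neighboring vertices of $G$ are realized by elements of $S$, which holds because $\Lambda$ acts transitively on the vertices of $\overline G$.

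After replacing $G$ by $\overline G$, the remaining discrepancy is that $d(o,\lambda o)$ may be $0$ for $\lambda$ in the finite stabilizer. This is harmless: it changes distances by a bounded amount. The proof of Theorem \ref{vecdscs:res} gives a lower bound $(1+\delta)|g_n|_S$ with $|g_n|_S\simeq n$, so additive constants are absorbed.

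Hence, if $\Lambda$ were not virtually abelian, we would obtain
\begin{equation}
(1+\delta)|g_n|_S-C\le W_1^{d_S}(\mu^{\ast n^2}_e,\mu^{\ast n^2}_{g_n})\le |g_n|_S+C\,,
\end{equation}
which is a contradiction for large $n$. Therefore $\Lambda$ is virtually abelian, hence quasi-isometric to $\ZZ^k$, and so is $G$.
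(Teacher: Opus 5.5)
Your proposal is correct and follows essentially the same route as the paper. You pass to a Trofimov quotient on which the group acts with finite stabilizers, and you take the Cayley graph with respect to $S=\{\lambda:\bar d(\bar o,\lambda\bar o)\le 1\}$, which is Sabidussi's graph $\ell G$ from Proposition \ref{notationsimplify:res} with the stabilizer added. You then iterate Remark \ref{rem:vfdsc} to get $W_1^{d_S}(\mu^{\ast n}_e,\mu^{\ast n}_\lambda)\le|\lambda|_S+C$ and conclude by contradiction with Theorem \ref{vecdscs:res}. The one slip is the normalization of $\mu$: every summand in your formula equals $\bar P(\bar o,\lambda\bar o)$, so your $\mu$ has total mass $|\mathrm{Stab}(\bar o)|$. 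You should instead take $\mu(\lambda)=\bar P(\bar o,\lambda\bar o)/|\mathrm{Stab}(\bar o)|$, which is the paper's $P(x,y)/\ell$; this is already symmetric by the symmetry and invariance of $\bar P$, so no symmetrization is needed.
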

\section{The polytope}
\label{sec:poly}

We follow the notation introduced in Section \ref{subsec:virtually}.
We endow $A_1\otimes_{\mathbb{Z}}\mathbb{R}$ with its unique Hausdorff vector space topology.
We consider
\begin{equation}\label{csdccdsc} E\defeq\overline{\operatorname{co}}\bigg\{\frac{\pi_1(g)}{|g|_S}:g\in \Gamma_N\setminus\{e\}\bigg\}\,, \end{equation}
where $\overline{\operatorname{co}}$ denotes the closed convex hull in the real vector space $A_1\otimes_{\mathbb{Z}}\mathbb{R}$. We will denote by $\Ext(E)$ the set of extreme points of $E$. The following fact seems to follow from a rather standard argument, though we could not find  clear references. Hence we give a detailed proof for the sake of clarity and completeness.

\begin{lemma}\label{cdescdsacsc}
The set $E$ is convex, compact, symmetric, and has nonempty interior.
Moreover, $E$ has finitely many extreme points, all of which belong to $A_1\otimes_\ZZ\QQ$.
\end{lemma}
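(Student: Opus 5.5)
The plan is to realize $E$ as the convex hull of finitely many explicit rational points, using the fact that $\pi_1$ restricted to $\Gamma_N$ is a homomorphism onto the lattice $A_1\cong\ZZ^{n_1}$.

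\textbf{Step 1 (basic properties).} Convexity and closedness hold by definition. Symmetry follows from $\pi_1(g^{-1})=-\pi_1(g)$ and $|g^{-1}|_S=|g|_S$.

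\textbf{Step 2 (boundedness).} Fix a norm on $A_1\otimes_\ZZ\RR$. I would show that $\pi_1:\Gamma_N\to A_1\otimes_\ZZ\RR$ is Lipschitz with respect to $d_S$. Write $g\in\Gamma_N$ as a word $s_1\cdots s_k$ in $S$ with $k=|g|_S$. Insert the coset representatives $\sigma$ to rewrite this as a product of elements of the finite set
\[
F\defeq\{\sigma(b)s\,\sigma(b')^{-1}\}\cap\Gamma_N .
\]
This gives $g=f_1\cdots f_k\cdot\sigma(\pi(g))^{-1}$, and the last factor is trivial since $g\in\Gamma_N$. Because $\pi_1$ is a homomorphism on $\Gamma_N$,
\[
\|\pi_1(g)\|\le k\max_{f\in F}\|\pi_1(f)\|.
\]
Hence every generator $\pi_1(g)/|g|_S$ of $E$ has norm at most $L\defeq\max_{f\in F}\|\pi_1(f)\|$, so $E$ is compact.

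\textbf{Step 3 (nonempty interior).} $A_1$ is a lattice of full rank, so I pick $h_1,\dots,h_{n_1}\in\Gamma_N$ whose images $\pi_1(h_i)$ form a basis. The points $\pm\pi_1(h_i)/|h_i|_S$ lie in $E$. Their convex hull is a cross-polytope, which contains a neighbourhood of $0$.

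\textbf{Step 4 (finitely many rational extreme points).} This is the main point. I would show directly that
\[
E=\operatorname{co}\{\pi_1(f)\ :\ f\in F'\},
\]
where $F'\defeq\{h\in\Gamma_N: |h|_S\le M\}$ for a suitable constant $M$. Since $F'$ is finite and $\pi_1(F')\subseteq A_1$, the extreme points are then finitely many and rational, indeed integral.

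\emph{Issue with Step 2 as written.} Step 2 bounds $\pi_1(g)$ by a sum of $\pi_1(f_i)$, but the factors $f_i$ live in $\Gamma_N$ with $\sum_i|f_i|_S\ne k$ in general. So $\pi_1(g)/|g|_S$ is not directly an average of the normalized points $\pi_1(f)/|f|_S$. I would fix this by working with the cosets directly, as follows.

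\emph{Normalized reformulation.} Set $P\defeq\operatorname{co}\{\pi_1(h)/|h|_S : h\in\Gamma_N\setminus\{e\},\ |h|_S\le M\}$. I want every $g\in\Gamma_N\setminus\{e\}$ to satisfy $\pi_1(g)\in|g|_S\,P$. Take a geodesic word $g=s_1\cdots s_k$ and look at the prefixes $p_j=s_1\cdots s_j$. Their cosets $\pi(p_j)$ range over the finite group $\GGN$. Whenever two prefixes lie in the same coset, say $p_i$ and $p_j$ with $i<j$, the segment $p_i^{-1}p_j$ is in $\Gamma_N$ and has length exactly $j-i$, because subwords of geodesics are geodesic. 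By pigeonhole on the cosets, I would cut the word at the prefixes lying in $\Gamma_N$ itself. Between consecutive returns the gap is at most $M\defeq|\GGN|$ if I cut at repeated cosets. Cutting at arbitrary repeated cosets gives segments that need not be contiguous, so instead I use only returns to the trivial coset $\Gamma_N$. The remaining risk is that returns to $\Gamma_N$ can be far apart; I would handle this by the standard loop-erasure of the coset path, which removes cycles of coset visits.

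\emph{Main obstacle and fallback.} I expect this decomposition to be the real difficulty. If it resists, I would fall back on a subadditivity argument. The function $v\mapsto\min\{|h|_S:\pi_1(h)=v\}$ is subadditive on $A_1$, up to the bounded coset correction. Its homogenization is then a norm whose unit ball is $E$, up to a scaling that needs checking. I would then invoke the known polyhedrality of stable norms of word metrics on finitely generated abelian groups, which yields a polytope with rational vertices. Since $\pi_1$ maps onto $\ZZ^{n_1}$, this gives exactly the claimed finiteness and rationality of the extreme points.
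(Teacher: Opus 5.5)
Your Steps 1--3 are fine. The Lipschitz bound in Step 2 is a valid route to compactness, and the ``issue'' you flag there does not affect boundedness.

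The gap is in Step 4, which carries the real content of the lemma, and the target you set there is false. There are examples where your $P$ differs from $E$ for every finite $M$, because a vertex of $E$ need not be of the form $\pi_1(h)/|h|_S$ at all. Take $\Gamma=\ZZ^2\rtimes C_2$ with $\tau$ swapping the coordinates, $\Gamma_N=\ZZ^2$, and $S=\{e_1^{\pm1},\tau\}$. Since $\tau e_1^a\tau=e_2^a$, one gets $|(a,0)|_S=|a|$ and $|(a,b)|_S=|a|+|b|+2$ for $b\ne0$. So $E$ is the $\ell^1$ unit ball, yet its vertex $(0,1)$ is only the limit of the points $(0,b)/(|b|+2)$. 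Every $P$ with finite $M$ has second coordinates at most $(M-2)/M$, so $\pi_1(g)\in|g|_SP$ fails for $g=(0,N)$ with $N$ large. Your first, unnormalized version with ``integral'' vertices already fails for $\langle a,b\,|\,a^2,b^2\rangle$ with $S=\{a,b\}$, where $E=[-\tfrac12,\tfrac12]$.

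What your loop-erasure sketch misses is conjugation. A loop erased at a coset $b\ne\pi(e)$ is not a subword lying in $\Gamma_N$. Its contribution to $\pi_1(g)$ is $\pi_1(\sigma(b)c\sigma(b)^{-1})$, where $c$ is the product along the loop, and $|\sigma(b)c\sigma(b)^{-1}|_S$ can exceed the loop length $l$. In the example, $\tau e_1\tau$ has length $3$ but comes from a loop of length $1$.

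The paper's fix is to use the finitely many points $\pi_1(\sigma(b)c\sigma(b)^{-1})/l$, normalized by the \emph{cycle length}, over simple cycles of the coset graph and $b\in\GGN$. Two facts then finish the proof. Loop erasure together with commutativity of $A_1$ writes $\pi_1(g)/|g|_S$ as a convex combination of these points with weights $l_p/|g|_S$. Each point lies in $E$, but only as a limit: use $|(\sigma(b)c\sigma(b)^{-1})^h|_S\le hl+2C_\sigma$, let $h\to\infty$, and use that $E$ is closed.

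Your fallback does not close the gap either. The homogenization of $N(v)=\min\{|h|_S:\pi_1(h)=v\}$ does have unit ball exactly $E$, but $N$ is not a word metric on $A_1$. In the example, $N((0,1))=3$ although $(0,1)$ is a vertex of the limit ball. For a word metric on $\ZZ^2$, the limit ball is the convex hull of the generators, so its vertices have length $1$. Rather, $N$ is the word metric of the virtually abelian group $\Gamma/\Gamma_{N,2}$ restricted to $A_1$. Polyhedrality with rational vertices in that setting is essentially the statement you are trying to prove, so the abelian result you cite does not apply.
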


\begin{proof}
The set $E$ is convex, symmetric, and has nonempty interior, since it contains suitably scaled copies of the generators of $A_1$.
We provide a characterization of $E$ which will make clear that it is compact and has finitely many extreme points contained in $A_1\otimes_\ZZ \QQ$.

\medskip

For the sake of exposition, we first consider the simpler case where $\Gamma=\Gamma_N$.
We prove that $E$ coincides with the convex envelope of generators:
\begin{equation}
E'\defeq \operatorname{co}\{\pi_1(s):s\in S\}\,,
\end{equation}
which is clearly compact and $\Ext(E')$ is finite and contained in $A_1\subseteq A_1\otimes_\ZZ \QQ$.

Since $|s|_S=1$ for every $s\in S\setminus \{e\}$, clearly $\pi_1(s)= \frac{\pi_1(s)}{|s|_S}\in E$.
Hence $E'\subseteq E$.
To prove that $E\subseteq E'$, we write any $g\in \Gamma_N\setminus \{e\}$ as $g=s_1\cdots s_k$, where $s_i\in S$ and $|g|_S=k$.
Hence
\begin{equation}
    \frac{\pi_1(g)}{|g|_S}
    = \frac{\pi_1(s_1\cdots s_k)}{k}
    = \frac{1}{k} \sum_{i=1}^k \pi_1(s_i) \in E'\,.
\end{equation}
Taking closed convex hulls gives $E\subseteq E'$.

\medskip
The proof of the general case $\Gamma_N\subseteq\Gamma$ requires some more work, since in general the generators $s_i$ do not belong to $\Gamma_N$.
It turns out that the right replacements are products of generators forming simple cycles in the graph whose vertices are the elements $b\in \GGN$ of the quotient space and whose edges are $(b,b\pi(s))_{b\in\GGN,\ s\in S}$.
Notice that this graph may have self-loops and multiple edges connecting two vertices.
More precisely, we say that $(s_1,\dots,s_k)\in S^k$ is a simple cycle based at $\pi(e)$ if the vertices $\pi(e),\pi(s_1),\pi(s_1s_2),\dots,\pi(s_1\cdots s_k)$ form a simple cycle in the graph described above, with initial and terminal vertex equal to $\pi(e)$ and with no other repetitions.
Equivalently, $\pi(s_1\cdots s_k)=\pi(e)$, and the vertices $\pi(e),\pi(s_1),\dots,\pi(s_1\cdots s_{k-1})$ are pairwise distinct.

We notice crucially that simple cycles are finitely many  as $\GGN$ is finite and we consider
\begin{equation}\label{efdsccsd}
    E'\defeq\operatorname{co}\Big\{\frac{\pi_1(\sigma(b)s_1\cdots s_k\sigma(b)^{-1})}{k}: (s_1,\dots,s_k)\text{ is a simple cycle at $\pi(e)$ and }b\in\GGN\Big\}\,
\end{equation}
where $\sigma:\GGN\to\Gamma$ is a section of the projection $\pi:\Gamma\to\GGN$ as introduced in Section \ref{subsec:virtually}.
Notice that $E'$ is compact. To conclude the proof it suffices to show that $E=E'$.

To show $E'\subseteq E$, take  $(s_1,\dots,s_k)$ to be a simple cycle at $e$ and $b\in\GGN$, we have to show that 
\begin{equation}\label{edscc}
    \frac{\pi_1(\sigma(b)s_1\cdots s_k\sigma(b)^{-1})}{k}\in E\,.
\end{equation}
Of course, we can assume that $s_1\cdots s_k\ne e$. We consider, for $h\in\NN\setminus\{0\}$,  
\begin{equation}
    \Gamma_N\setminus\{e\}\ni (\sigma(b)s_1\cdots s_k\sigma(b)^{-1})^h= \sigma(b)(s_1\cdots s_k)^h\sigma(b)^{-1}\,,
\end{equation}
so that, recalling \eqref{csdcdcsc}, $|(\sigma(b)s_1\cdots s_k\sigma(b)^{-1})^h|_S\le kh+2C_\sigma$. Hence,
\begin{equation}
   E\ni  \frac{\pi_1\big((\sigma(b)s_1\cdots s_k\sigma(b)^{-1})^h\big)}{kh+2C_\sigma}=\frac{hk}{kh+2C_\sigma}\frac{\pi_1(\sigma(b)s_1\cdots s_k\sigma(b)^{-1})}{k}\,
\end{equation}
so that \eqref{edscc} follows by letting $h\rightarrow\infty$ and using that $E$ is closed.

We show now $E\subseteq E'$. Let $g\in\Gamma_N\setminus\{e\}$ and set $k\defeq |g|_S$, so that $g=s_1\cdots s_k$ for some $s_1,\dots,s_k\in S$. Now we want to decompose the cycle $(s_1,\dots,s_k)$ into simple cycles (not necessarily based at $\pi(e)$  -- notice however that every simple cycle corresponds to a simple cycle based at $\pi(e)$ through conjugation). The procedure is as follows. Assume that $(s_1,\dots,s_k)$ is not simple, then there exist $1\le i\le j\le k$ such that $(s_i,\dots, s_j)$ is a simple cycle. We then write $b\defeq \pi(s_1\cdots s_{i-1})$, and we notice that 
\begin{equation}
    s_1\cdots s_k= \underbrace{s_1\cdots s_{i-1}\sigma(b)^{-1}}_{\in\Gamma_N}\underbrace{\sigma(b) s_i\cdots s_j \sigma(b)^{-1}}_{\in\Gamma_N}\underbrace{\sigma(b)s_{j+1}\cdots s_k}_{\in\Gamma_N}.
\end{equation}
Hence, since $A\otimes_\ZZ\RR$ is abelian, using additive notation, we have
\begin{align}
    \pi_1(s_1\cdots s_k)
    &=
    \pi_1(s_1\cdots s_{i-1}\sigma(b)^{-1})
    +\pi_1(\sigma(b)s_i\cdots s_j\sigma(b)^{-1})
    +\pi_1(\sigma(b)s_{j+1}\cdots s_k)
    \\
    &=
    \pi_1(s_1\cdots s_{i-1}\sigma(b)^{-1})
    +\pi_1(\sigma(b)s_{j+1}\cdots s_k)
    +\pi_1(\sigma(b)s_i\cdots s_j\sigma(b)^{-1})
    \\
    &=
    \pi_1(s_1\cdots s_{i-1}s_{j+1}\cdots s_k)
    +\pi_1(\sigma(b)s_i\cdots s_j\sigma(b)^{-1})\,.
\end{align}
Continuing in this fashion, we see that we can write
\begin{equation}
    \pi_1(s_1\cdots s_k)
    =
    \sum_p \pi_1(\sigma(b_p)c_p\sigma(b_p)^{-1})\,.
\end{equation}
where $(s_{i_1},\dots, s_{i_{l_p}})$ is a simple cycle based at $e$, $c_p \defeq s_{i_1}\cdots s_{i_{l_p}}$, $b_p\in\GGN$ and $\sum_p {l_p=k}$, where $p$ ranges in a finite index set. Thus,
\begin{equation}
    \frac{\pi_1(g)}{|g|_S}=\frac{\pi_1(s_1\cdots s_k)}{k}=\sum_p \frac{l_p}{k}\frac{\pi_1(\sigma(b_p)c_p\sigma(b_p)^{-1})}{l_p}\in E'\, ,
\end{equation}
which concludes the proof.
\end{proof}

By Lemma \ref{cdescdsacsc}, there exists a well-defined norm $|\,\cdot\,|_E$ on the real vector space $A_1\otimes_\ZZ\RR$ associated with $E$.
Moreover, by \eqref{csdccdsc}, we have the inequality
\begin{equation}\label{eq:E-S}
|\pi_1(g)|_E \le |g|_S
\qquad \text{for every }g\in \Gamma_N\,.
\end{equation}

As a consequence of the symmetry of $E$ and of the characterization of $E$ as the convex hull of cycles in \eqref{efdsccsd}, we can characterize its extreme points as
\begin{equation}\label{vefdsc} \Ext(E)=\Big\{\frac{\pi_1(t_i^{\pm 1})}{l_i}:i= 0,\dots, m\Big\}\,, \end{equation}
where $t_0,\dots,t_m\in\Gamma_N$ are cycles of lengths $l_0,\dots,l_m$.
We assume that  $t_i^{\pm 1}\ne t_j$ for $i\ne j$, in order to avoid redundancies in the definition.
Finally, we record the bound
\begin{equation}  \label{vfsdscsx}
|t_i^k|_S\le |k| l_i + 2C_\sigma\qquad\text{for every $k\in\ZZ$ and $i=0,\dots,m$}\,,
\end{equation}
which follows directly from the cycle structure, as observed in the proof of Lemma~\ref{cdescdsacsc}.

\subsection{Special vertex}
\label{subsec:special_vertex}
We assume that $\Gamma$ is virtually nilpotent and \emph{not} virtually abelian.
Equivalently, $\Gamma$ contains a nilpotent subgroup of finite index, but no abelian subgroup of finite index.

We notice that relation \eqref{cdscscd} implies that the composition
\begin{equation}\label{eq:commutator1}
\Gamma_N\times \Gamma_N
\xrightarrow{[\,\cdot\,,\,\cdot\,]}
\Gamma_{N,2}
\xrightarrow{\pi_2}
A_2
\end{equation}
is well defined and induces a $\mathbb{Z}$-bilinear form
\begin{equation}\label{dscsc}
[\,\cdot\,,\,\cdot\,]_{\Gamma_N}:A_1\times A_1\rightarrow A_2\,,
\end{equation}
where we add the subscript $\Gamma_N$ to avoid confusion with the vanishing commutator in $A_1$.
We extend it to a $\RR$-bilinear map
\begin{equation}\label{dscsc1}
[\,\cdot\,,\,\cdot\,]_{\Gamma_N}:(A_1\otimes_\ZZ\RR)\times (A_1\otimes_\ZZ\RR)\rightarrow A_2 \otimes_\mathbb{Z} \RR\,,
\end{equation}
which we still denote with the same symbol.

\begin{lemma}\label{dscc}
Assume that $\Gamma$ is virtually nilpotent and not virtually abelian.
There exists $v_0\in\Ext(E)$ such that
    \begin{equation}
[v_0,\,\cdot\,]_{\Gamma_N}:A_1\otimes_\ZZ\RR \to A_2\otimes_\ZZ\RR
\qquad\text{is not identically zero}\,.
    \end{equation}
\end{lemma}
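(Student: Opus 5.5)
We argue by contradiction and assume that $[v,\,\cdot\,]_{\Gamma_N}\equiv 0$ for every $v\in\Ext(E)$. The strategy has three steps: first we show that the bilinear form $[\,\cdot\,,\,\cdot\,]_{\Gamma_N}$ on $A_1\otimes_\ZZ\RR$ vanishes identically; then we deduce that $\Gamma_N$ is abelian; finally we conclude that $\Gamma$ is virtually abelian, which contradicts the assumption.

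For the first step we use Lemma~\ref{cdescdsacsc}. The set $E$ is convex and compact with finitely many extreme points, so by Minkowski's theorem (Krein--Milman in finite dimension) $E=\operatorname{co}\Ext(E)$. Since $E$ also has nonempty interior, $\Ext(E)$ spans $A_1\otimes_\ZZ\RR$ as a real vector space. The map $v\mapsto[v,\,\cdot\,]_{\Gamma_N}$ is $\RR$-linear, and it vanishes on a spanning set, so $[\,\cdot\,,\,\cdot\,]_{\Gamma_N}\equiv0$. Restricting to $A_1$, this says that $\pi_2([g,h])=0$ for all $g,h\in\Gamma_N$, i.e. $[\Gamma_N,\Gamma_N]\subseteq\Gamma_{N,3}$.

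The second step is the core of the argument, and I expect the torsion-freeness bookkeeping to be the main point needing care. By definition, $\Gamma_{N,2}=\sqrt{\gamma_2(\Gamma_N)}$, and $\Gamma_{N,2}\supseteq\gamma_2(\Gamma_N)$. From $\gamma_2(\Gamma_N)\subseteq\Gamma_{N,3}$ we get $\Gamma_{N,2}\subseteq\sqrt{\Gamma_{N,3}}$. For this to give $\Gamma_{N,2}\subseteq\Gamma_{N,3}$ we need $\Gamma_{N,3}$ to be isolated, i.e. $\sqrt{\Gamma_{N,3}}=\Gamma_{N,3}$. This holds: if $h^k\in\Gamma_{N,3}$, then $h^{kk'}\in\gamma_3(\Gamma_N)$ for some $k'$, so $h\in\Gamma_{N,3}$. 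Hence $\Gamma_{N,2}=\Gamma_{N,3}$, so $A_2=0$ and $n_2=0$. We then iterate. Since $\gamma_3(\Gamma_N)=[\Gamma_N,\gamma_2(\Gamma_N)]\subseteq[\Gamma_N,\Gamma_{N,3}]\subseteq\Gamma_{N,4}$ by \eqref{cdscscd}, the same reasoning gives $\Gamma_{N,3}=\Gamma_{N,4}$. More generally, once $\gamma_i(\Gamma_N)\subseteq\Gamma_{N,i+1}$ we obtain $\Gamma_{N,i}=\Gamma_{N,i+1}$ and $\gamma_{i+1}(\Gamma_N)\subseteq[\Gamma_N,\Gamma_{N,i+1}]\subseteq\Gamma_{N,i+2}$. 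By induction, $\Gamma_{N,2}=\Gamma_{N,c+1}=\{e\}$ by \eqref{isolcent}. Therefore $\gamma_2(\Gamma_N)=\{e\}$, i.e. $\Gamma_N$ is abelian.

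For the third step, $\Gamma_N$ is abelian and of finite index in $\Gamma$, so $\Gamma$ is virtually abelian, contradicting the hypothesis. Alternatively, one can avoid the induction in the second step with a shorter argument that works whenever $c\ge2$. The map $A_1\times A_1\to A_2$ in \eqref{dscsc} is nonzero whenever $\Gamma_{N,2}\neq\Gamma_{N,3}$. If instead $\Gamma_{N,2}=\Gamma_{N,3}$, then $\gamma_2\subseteq\Gamma_{N,3}$, which propagates as above. The inductive argument is the cleanest way to present this.
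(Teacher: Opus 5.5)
Your proof is correct and follows the paper's route. Lemma \ref{cdescdsacsc} reduces the claim to the commutator map into $A_2$ being nontrivial. If that map vanished, the isolated lower central series would stabilize at $\Gamma_{N,2}=\Gamma_{N,c+1}=\{e\}$, so $\Gamma_N$ would be abelian. Along the way you make explicit two points the paper leaves implicit: that $\Ext(E)$ spans because $E=\operatorname{co}\Ext(E)$ has nonempty interior, and that each $\Gamma_{N,i}$ is isolated.
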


\begin{proof}
By Lemma \ref{cdescdsacsc}, it is enough to show that the map in \eqref{eq:commutator1} is not identically $e$.
Suppose, by contradiction, that this map is identically $e$.
Then 
\begin{equation}
    \gamma_2(\Gamma_N)=[\Gamma_N,\Gamma_N]\subseteq\Gamma_{N,3}=\sqrt{\gamma_3(\Gamma_N)}\,,
\end{equation}
which implies $\Gamma_{N,2}=\Gamma_{N,3}$.
This would imply $\Gamma_{N,2}=\{e\}$, since the isolated lower central series is strictly decreasing until it terminates.
Indeed, assume that, for some $i\ge 3$,
\begin{equation}\label{edsccsc}
\Gamma_{N,2}=\Gamma_{N,3}=\cdots = \Gamma_{N,i}\,.
\end{equation}
Then, by \eqref{cdscscd},
\begin{equation}
\gamma_i(\Gamma_N)=[\Gamma_N,\gamma_{i-1}(\Gamma_N)]\subseteq [\Gamma_N,\Gamma_{N,i-1}]
=[\Gamma_N,\Gamma_{N,i}]
\subseteq \Gamma_{N,i+1}\,,
\end{equation}
and hence $\Gamma_{N,i}\subseteq \Gamma_{N,i+1}$.
Since the converse inclusion always holds, \eqref{edsccsc} also holds with $i+1$ in place of $i$.
\end{proof}

\section{Norm and blowdown}\label{sect:noemandb}

In this section, we describe known results on the structure at infinity of the Cayley graph of a finitely generated virtually nilpotent group.
Technically, we do not use this description in the proof of the main results of this paper.
However, the discussion is useful for building intuition and for explaining some of the technical choices made in the actual proof. 

\subsection{Carnot--Carathéodory spaces}

We use the notation of Section \ref{subsec:virtually}.
Let $\Gamma$ be a finitely generated virtually nilpotent group, endowed with the word metric $d_S$ induced by a finite symmetric set of generators $S$.
By \cite{Pansu89,Breuillard}, the blow-down of the metric space $(\Gamma,d_S)$ is a stratified nilpotent Lie group $(N_\infty,d_{\rm CC})$ endowed with a Carnot--Carathéodory distance.
The blow-down can be understood in the pointed Gromov--Hausdorff sense: the family of rescaled metric spaces $(\Gamma,r^{-1}d_S,e)$ converges, as $r\to\infty$, to the metric space $(N_\infty,d_{\rm CC},e)$.
The structure of the blow-down can be described as follows: the corresponding graded Lie algebra 
\begin{equation}
\mathfrak{n}_\infty=\mathfrak{n}_1\oplus\mathfrak{n}_2\oplus\cdots\oplus\mathfrak{n}_c\,,
\end{equation}
satisfies
\begin{equation}
\mathfrak{n}_i \cong A_i\otimes_\ZZ\RR
=\sfrac{\Gamma_{N,i}}{\Gamma_{N,i+1}}\otimes_\ZZ\RR\,.
\end{equation}

The first layer $\mathfrak{n}_1$ is identified with $A_1\otimes_\ZZ\RR$.
The Carnot--Carathéodory distance $d_{\rm CC}$ is the left-invariant sub-Finsler distance obtained from the norm $|\,\cdot\,|_E$ whose unit ball is the polytope $E$ defined in Section \ref{sec:poly}.
Thus, if $x,y\in N_\infty$, then
\begin{equation}
d_{\rm CC}(x,y)
=\inf\left\lbrace
\int_0^1 |\dot\gamma(t)|_E\,dt :
\gamma(0)=x,\ \gamma(1)=y,\ \dot\gamma(t)\in \mathfrak{n}_1 \text{ for a.e. }t
\right\rbrace.
\end{equation}
Here the condition $\dot\gamma(t)\in\mathfrak{n}_1$ is understood after left translation to the identity.

\subsection{Malcev closure}
\label{sec:malcev_closure}
A concrete way to realize the Lie algebra structure $(\mathfrak{n}_\infty, [\, \cdot \, , \, \cdot ]_\infty)$ of $N_\infty$ is to consider the Malcev closure of $\Gamma_N$, namely a simply connected nilpotent Lie group $N$ containing $\Gamma_N$ as a cocompact lattice; see \cite{Raghunathan}.
The finite extension $\Gamma$ is irrelevant for the determination of the Lie algebra structure, whereas it enters decisively in the definition of the Carnot--Carathéodory distance through the polytope $E$ explained below.
To construct $N$, one typically fixes a Malcev basis of $\Gamma_N$ (see Subsection \ref{subsec:Malcev} for a construction in the step-two framework).
This gives a vector-space decomposition of the Lie algebra $\mathfrak{n}$ of $N$,
\begin{equation}\label{eq:deco_m}
\mathfrak{n}=\mathfrak{m}_1\oplus\cdots\oplus\mathfrak{m}_c
=
\mathfrak{m}_1\oplus[\mathfrak{n},\mathfrak{n}]\,.
\end{equation}
The Lie algebra structure $(\mathfrak{n},[\,\cdot\,,\,\cdot\,])$ depends on the chosen Malcev basis and does not necessarily coincide with the graded Lie algebra $(\mathfrak{n}_\infty,[\,\cdot\,,\,\cdot\,]_\infty)$, nor with its stratification.
However, the latter can be obtained through the limiting procedure
\begin{equation}\label{eq_limit_lie}
[x,y]_\infty
=
\lim_{\lambda\to\infty}
\delta_\lambda^{-1}\big[\delta_\lambda(x),\delta_\lambda(y)\big],
\qquad
x,y\in\mathfrak{n}\,,
\end{equation}
where $\delta_\lambda(x)\defeq \lambda x_1+\lambda^2 x_2+\cdots+\lambda^c x_c$ is the anisotropic scaling operator associated with the decomposition \eqref{eq:deco_m}.
The limiting procedure has the effect of retaining only the leading homogeneous component of the original bracket with respect to this decomposition.
More precisely, if $x\in\mathfrak{m}_i$ and $y\in\mathfrak{m}_j$, then $[x,y]_\infty$ is the projection of $[x,y]$ onto $\mathfrak{m}_{i+j}$, with the convention that $\mathfrak{m}_{i+j}=\{0\}$ if $i+j>c$.

Again through the fixed Malcev basis we can identify $\mathfrak{m}_1$ with $A_1 \otimes_\ZZ \RR$ and as in \cite{Breuillard}, consider the induced polytope $E$ (defined in \eqref{csdccdsc}) in $\mathfrak{m}_1$. 
Recall that by Lemma \ref{cdescdsacsc}, $E$ above is the unit ball of a suitable norm $|\,\cdot\,|_E$ on $\mathfrak{m}_1$. We then consider  $d_{\rm CC}:N\times N\rightarrow[0,\infty)$, which is the left-invariant Carnot--Carathéodory metric defined by the norm $|\,\cdot\,|_E$. By \cite[Theorem 6.2]{Breuillard}, it holds that, 
\begin{equation}
    \lim_{x\rightarrow\infty} \frac{d_S(e,x)}{d_{\rm CC}(e,x)}=1\,.
\end{equation}

\subsection{Stability of curvature bounds}

The blow-down structure $(N_\infty,d_{\rm CC})$ can be used to study the Ollivier--Ricci curvature of $(\Gamma,d_S)$ at large scales.
Concretely, one may reinterpret the statement of Theorem \ref{vecdscs} as a curvature bound on the blow-down.
Indeed, the statement that
\begin{equation}
\limsup_{n\rightarrow\infty}\kappa_{n^2}(e,g_n)<0
\end{equation}
for a sequence $(g_n)_n\subseteq\Gamma$ satisfying
$\lim_{n\rightarrow\infty}d_S(e,g_n)/n\in(0,\infty)$ suggests, after passing to a subsequence in the blow-down, the existence of a point $g\in N_\infty$ with $d_{\rm CC}(g,e)\in(0,\infty)$ such that
\begin{equation}\label{eq:limit_curvature}
\kappa_\infty(e,g)\defeq 1-\frac{W_1^{d_{\rm CC}}(\mu^\infty_g,\mu^\infty_e)}{d_{\rm CC}(g,e)}<0\,.
\end{equation}
Here $\mu^\infty_e$ denotes the scaling limit, in the Carnot--Carathéodory space $(N_\infty,d_{\rm CC})$, of the measures $\mu^{\ast n^2}_{e}$ viewed from the identity, while $\mu^\infty_g$ is its left translate by $g$.
Thus, at least formally, the uniformly negative curvature bound detected at scale $n$ for the discrete random walk should persist in the limiting sub-Finsler geometry.

Conversely, the existence of a point $g\in N_\infty\setminus\{e\}$ such that \eqref{eq:limit_curvature} holds implies the conclusion of Theorem \ref{vecdscs}. This reduction to the study of curvature on Carnot--Carathéodory spaces has both advantages and disadvantages.
The continuous asymptotic structure allows one to use analytic tools that are not available in the discrete framework, as well as a proof by rigidity (described below) which is not possible in the discrete case.
On the other hand, the argument is less self-contained, since it relies heavily on highly non-trivial results about limits at infinity, both for the metric structure and for the Markov processes.
Moreover, this strategy loses some of the combinatorial flavor and simplicity of the discrete approach.

\subsection{Possible argument in the continuous case}

Let us consider, as a toy model of a Lie group endowed with a Carnot--Carathéodory distance, the Heisenberg group
\begin{equation}\label{eq:heisenberg_R}
H_3(\RR)
=
\left\lbrace
\begin{pmatrix}
1 & a & c \\
0 & 1 & b \\
0 & 0 & 1
\end{pmatrix}
: a,b,c\in\RR
\right\rbrace.
\end{equation}
It can be thought of as the Malcev closure of $H_3(\ZZ)$ with respect to the Malcev basis 
\begin{equation}
x_1=
\begin{pmatrix}
1 & 1 & 0 \\
0 & 1 & 0 \\
0 & 0 & 1
\end{pmatrix},
\qquad
x_2=
\begin{pmatrix}
1 & 0 & 0 \\
0 & 1 & 1 \\
0 & 0 & 1
\end{pmatrix},
\qquad
y=
\begin{pmatrix}
1 & 0 & 1 \\
0 & 1 & 0 \\
0 & 0 & 1
\end{pmatrix},
\end{equation}
which satisfies the commutator relations: $[x_1,x_2]=y$ and $y$ is central.

The Lie algebra $\mathfrak{h}_3$ is the space of upper-triangular matrices with zeros along the diagonal.
It is stratified as
\begin{equation}
\mathfrak{h}_3=\mathfrak{h}_1\oplus\mathfrak{h}_2,
\qquad
\mathfrak{h}_1=\operatorname{span}\{X_1,X_2\},
\qquad
\mathfrak{h}_2=\operatorname{span}\{Y\}\,,
\end{equation}
where $X_1=x_1-I$, $X_2=x_2-I$, and $Y=y-I$.
Notice that $\exp (X_i)=x_i$, $\exp(Y)=y$, and
\begin{equation}
[X_1,X_2]=Y,
\qquad
[X_1,Y]=[X_2,Y]=0\,.
\end{equation}
A Carnot--Carathéodory distance $d_{\rm CC}$ on $H_3(\RR)$ is obtained by fixing a norm $|\, \cdot \,|$ on the horizontal layer $\mathfrak{h}_1$ and declaring admissible curves to be those whose left-translated velocity belongs to $\mathfrak{h}_1$ almost everywhere.
For the sake of simplicity, we assume that the unit ball of the norm $|\, \cdot \,|$ is the convex hull of $\pm X_1$ and $\pm X_2$.
This corresponds to the blow-down of $H_3(\ZZ)$ with the word metric generated by $S=\{x_1^{\pm1},x_2^{\pm1}\}$.

\begin{remark}
The coordinate system $(a,b,c)$ in \eqref{eq:heisenberg_R} is adapted to the normal form
\begin{equation}
    x=x_2^b x_1^a y^c\,.
\end{equation}
Thus it coincides with Malcev coordinates with respect to the ordered basis $x_2,x_1,y$, and not with respect to the natural ordering $x_1,x_2,y$.
\end{remark}

We argue that \begin{equation}
    W_1^{d_{\rm CC}}(\mu_{x_1},\mu_e) > d_{\rm CC}(x_1,e)=1
\end{equation}
where $\mu_x\defeq (L_x)_* \mu$ and $\mu$ is any Gaussian-like probability measure in $H_3(\RR)$ that contains a neighborhood of $e$ in its support.

We do not need to define the full nonlinear potential described in Section \ref{subsec:Kantorovich}, it is enough to consider the linear term and use a rigidity argument, which we remark is possible only in the asymptotic case.
With respect to the coordinates $(a,b,c)$ as in \eqref{eq:heisenberg_R}, we consider the function $\ell(x)\defeq a(x)$ for $x\in H_3(\RR)$.
Clearly, $\ell$ is $1$-Lipschitz with respect to $d_{\rm CC}$, since its horizontal differential corresponds to $X_1^*$, where we use the notation $X_1^*,X_2^*,Y^*$ for the dual basis of $X_1,X_2,Y$.
Obviously, $\ell(x_1)=1$ and $\ell(x_2)=0$. By Kantorovich duality,
\begin{equation}
    W_1^{d_{\rm CC}}(\mu_{x_1},\mu_e)
    \ge 
    \int \ell(x) d \mu_{x_1}(x) - \int \ell(x) d \mu_{e}(x)
    = \int (\ell(x_1 x) - \ell(x)) \, d\mu(x)
    =1\,.
\end{equation}
Suppose, by contradiction, that the inequality is an equality.
Then, for every $(x,x')$ in the support of an optimal plan $\pi$, we must have
\begin{equation}
d_{\rm CC}(x,x')=\ell(x)-\ell(x')\,.
\end{equation}
This implies that equality is realized in the direction detected by $\ell$, and hence
\begin{equation}
b(x)=b(x'),
\qquad
c(x)=c(x')\,.
\end{equation}
Therefore, $\mu_{x_1}$ and $\mu_e$ have the same $(b,c)$-marginals.
However, this is not consistent with the group structure of $H_3(\RR)$.
Indeed,
\begin{equation}
\int b(x)c(x)\,d\mu_{x_1}(x)
=
\int b(x_1x)c(x_1x)\,d\mu_e(x)
=
\int b(x)\big(b(x)+c(x)\big)\,d\mu_e(x)\,.
\end{equation}
Under our assumptions on $\mu$, we have $\int b(x)^2\,d\mu_e(x)>0$, and hence
\begin{equation}
\int b(x)c(x)\,d\mu_{x_1}(x)
\neq
\int b(x)c(x)\,d\mu_e(x)\,,
\end{equation}
which gives a contradiction.

\medskip

A slightly different approach consists in designing a nonlinear potential by adding to $a(x)$ a quadratic correction that is able to capture the non-abelian structure of the group, in order to obtain the transport bound directly from duality.
A possible choice is
\begin{equation}\label{eq:pot_con}
\ell(x)=a(x)+\frac{\eta}{R^2}b(x)c(x)
\qquad \text{for $x\in B_R^{d_{\rm CC}}(e)$ and $R\ge 10$\,,}
\end{equation}
extended by zero outside the ball of radius $10R$.
It is easy to show that $\ell$ has a $1$-Lipschitz extension with respect to $d_{\rm CC}$, for $\eta$ small enough, by studying the action of its differential in the horizontal directions $X_1$ and $X_2$.
Here one uses that $|c(x)|\le CR^2$ for every $x\in B_R^{d_{\rm CC}}(e)$.

Then, plugging $\ell$ into the duality formulation, one gains a positive term $\frac{\eta}{R^2}\int b(x)^2\, d\mu_e(x)$, exactly as in the previous calculation.
One point of care is that we need to cut off outside the ball $B_R(e)$ and use Gaussian estimates, for $R>1$ large enough, to argue that the integral outside $B_R(e)$ is negligible. Compare with Section \ref{sect:mk}.

\subsection{Comparison with Section \ref{sec:virtually_nilpotent}}

This second approach is closer to the actual proof of Theorem \ref{vecdscs:res}.
Conceptually, there are no major differences: we construct a discrete counterpart of the potential \eqref{eq:pot_con}.
Technically, however, there are two important points to keep in mind.
\begin{enumerate}
\item Although not strictly necessary, we reduce to the case of step-two groups, as in the toy model above, but now in the discrete framework.
This is a harmless reduction based on the discussion in Section \ref{subsec:quotientspaces}.
The main advantage is technical: some distance estimates become easier.

\item We introduce an auxiliary weighted distance $|\,\cdot\,|_\omega$, which is asymptotically equivalent to $|\,\cdot\,|_S$ (see Proposition \ref{vdfvsd}).
More precisely,
\begin{equation}
    \lim_{\Gamma_N\ni g\rightarrow\infty}\frac{|g|_S}{|g|_{\omega}}=1\,.
\end{equation}
This new distance is technically more convenient, since  its geodesics follow appropriately chosen directions and this allows us to  choose a $1$-Lipschitz potential with a particularly simple  expression.
At first glance, it may seem unnatural that such a noncanonical distance is asymptotically equivalent to the original one.
In fact, this is a consequence of the rigid blow-down structure of $\Gamma$ described above and the fact that $|\,\cdot\,|_\omega$ is tailored on the generating set $S$.
This is perfectly consistent with the construction in Subsection \ref{sec:malcev_closure}, which is based on the choice of a Malcev basis, although this dependence disappears in the blow-down.
\end{enumerate}

We finally mention that the blow-down structure of the Heisenberg toy model is simpler than in the general case because of its step-two structure: it coincides with the Malcev completion of $H_3(\mathbb{Z})$ with respect to a suitable Malcev basis.
In general, the Malcev completion is not stratified, whereas the blow-down is stratified.
In other words, the limiting procedure \eqref{eq_limit_lie}, which is trivial for the Heisenberg group, is non-trivial in general.

\section{The virtually nilpotent case}
\label{sec:virtually_nilpotent}

This section is devoted to the proof of Theorem \ref{vecdscs:res}.
As anticipated in the introduction, it is based on a duality argument: we construct a suitable Kantorovich potential by exploiting the nilpotent, non-abelian structure.
To this end, it is convenient, though not strictly necessary, to first reduce to the case of step-two groups.
This reduction is carried out in Section \ref{sec:reduction_step_2}.
We then introduce Malcev coordinates and an auxiliary distance.
The latter is introduced in Subsection~\ref{subsec:auxiliary_distance}. Its meaning and role have been discussed in Section \ref{sect:noemandb} in terms of the asymptotic structure of virtually nilpotent groups.

\subsection{Reduction to step two}
\label{sec:reduction_step_2}
In the notation of Section \ref{subsec:virtually}, we have the isolated lower central series \eqref{isolcent}, consisting of normal subgroups of $\Gamma$.
We apply Lemma \ref{dfcsc} with $K\defeq \Gamma_{N,3}\unlhd \Gamma$.

It turns out that $\Gamma_N/\Gamma_{N,3}$ is nilpotent, torsion-free, non-abelian, and of step two.
Indeed, it is nilpotent because it is a quotient of the nilpotent group $\Gamma_N$.
It is torsion-free by the definition of the isolated lower central series: if $g^k\in\Gamma_{N,3}$ for some $k\ge1$, then $g\in\Gamma_{N,3}$.
Moreover, relation \eqref{cdscscd} gives $[\Gamma_N,\Gamma_{N,2}]\subseteq\Gamma_{N,3}$, and therefore the commutator subgroup of $\Gamma_N/\Gamma_{N,3}$ is central.
Thus $\Gamma_N/\Gamma_{N,3}$ has nilpotency step at most two.
Finally, since $\Gamma$ is not virtually abelian, the finite-index subgroup $\Gamma_N$ is not abelian.
Hence $\Gamma_{N,2}\ne\{e\}$.
Moreover, $\Gamma_{N,2}\ne\Gamma_{N,3}$, since the isolated lower central series is strictly decreasing until it terminates; see the proof of Lemma \ref{dscc} for a clarification of this point.

Set $\overline{\Gamma}\defeq \sfrac{\Gamma}{\Gamma_{N,3}}$ and $\overline{\Gamma}_N\defeq\sfrac{\Gamma_N}{\Gamma_{N,3}}$.
Then $\overline{\Gamma}_N\unlhd\overline{\Gamma}$ and $\big|\overline{\Gamma}:\overline{\Gamma}_N\big|=|\Gamma:\Gamma_N|<\infty$.
Let $\overline S\defeq p(S)$ be the induced set of generators of $\overline{\Gamma}$.
Consider also the probability measure $\overline\mu\defeq p_*\mu$ on $\overline{\Gamma}$.

Assume that Theorem \ref{vecdscs:res} holds for the pair $(\overline{\Gamma},\overline\mu)$, which satisfies all the assumptions and has the additional property that $\overline{\Gamma}_{N,3}=\{e\}$.
Then there exists $\overline g_n\in\overline{\Gamma}$ such that
\begin{equation}
\lim_{n\to\infty}\frac{|\overline g_n|_{\overline S}}{n}\in(0,\infty)\,,
\end{equation}
and, for some $\delta>0$,   
\begin{equation}
W_1^{d_{\overline S}}(\overline\mu^{\ast n^2}_{\overline g_n},\overline\mu^{\ast n^2}_{\bar e})
\ge (1+\delta)|\overline g_n|_{\overline S}
\qquad \text{for $n\ge 0$ sufficiently large}\,.
\end{equation}
By \eqref{eq:orbit_quotient}, for every $n$ there exists $g_n\in\Gamma$ such that $p(g_n)=\overline g_n$ and
\begin{equation}
|g_n|_S=|\overline g_n|_{\overline S}\,.
\end{equation}
Therefore, by Lemma \ref{dfcsc},
\begin{equation}
W_1^{d_S}(\mu^{\ast n^2}_{g_n},\mu^{\ast n^2}_e)
\ge
W_1^{d_{\overline S}}(\overline\mu^{\ast n^2}_{\overline g_n},\overline\mu^{\ast n^2}_{\bar e})
\ge
(1+\delta)|g_n|_S\,
\end{equation}
 so that Theorem \ref{vecdscs:res} would follow for the pair $(\Gamma,\mu)$.

\subsection{Malcev coordinates}
\label{subsec:Malcev}
By Section \ref{sec:reduction_step_2}, we can focus on groups of step two.
More precisely, we consider $\Gamma$ as in Section \ref{subsec:virtually}, with the additional assumption that $\Gamma_{N,3}=\{e\}$.
In particular, $\Gamma_{N,2}$ is contained in the center of $\Gamma_N$ and is identified with $A_2\cong\mathbb{Z}^{n_2}$.

We construct a Malcev basis.
The construction is standard, the main goal of this subsection is to fix the notation that will be used throughout the proof of Theorem~\ref{vecdscs}.
We choose elements 
\begin{equation}
  x_1,\ldots,x_{n_1}\in \Gamma_N  
\end{equation}
such that their $\pi_1$-projections in $A_1$ form a basis of the corresponding $\mathbb{Z}$-module.
We then choose elements 
\begin{equation}
    y_1,\ldots,y_{n_2}\in \Gamma_{N,2}=A_2
\end{equation}
forming a basis of the corresponding $\mathbb{Z}$-module.

\begin{prop}
For every $g\in \Gamma_N$, there exist coordinates $a=a(g)\in \mathbb{Z}^{n_1}$ and $b=b(g)\in \mathbb{Z}^{n_2}$ such that
\begin{equation}
\label{eq:malcev_form}
g = x_1^{a_1} \cdots x_{n_1}^{a_{n_1}} \cdot y_1^{b_1} \cdots y_{n_2}^{b_{n_2}}\,.
\end{equation}
Moreover, this representation is unique.
\end{prop}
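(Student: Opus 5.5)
Existence and uniqueness both come from the central extension
\[
1\to\Gamma_{N,2}\to\Gamma_N\xrightarrow{\pi_1}A_1\to 1,
\]
where, in the step-two setting of Subsection \ref{subsec:Malcev}, $\Gamma_{N,2}\cong A_2\cong\ZZ^{n_2}$ is central and torsion-free and $A_1\cong\ZZ^{n_1}$ is free abelian with basis $\pi_1(x_1),\dots,\pi_1(x_{n_1})$.

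\textbf{Existence.} Given $g\in\Gamma_N$, expand $\pi_1(g)=\sum_i a_i\pi_1(x_i)$ in $A_1$ with $a=(a_1,\dots,a_{n_1})\in\ZZ^{n_1}$; this is possible because the $\pi_1(x_i)$ form a $\ZZ$-basis. Set $h\defeq (x_1^{a_1}\cdots x_{n_1}^{a_{n_1}})^{-1}g$. Since $\pi_1$ is a homomorphism into an abelian group, $\pi_1(h)=0$, so $h\in\ker\pi_1=\Gamma_{N,2}$. Because $y_1,\dots,y_{n_2}$ is a $\ZZ$-basis of the abelian group $\Gamma_{N,2}$, we can write $h=y_1^{b_1}\cdots y_{n_2}^{b_{n_2}}$ with $b\in\ZZ^{n_2}$, and the order of the factors is irrelevant there. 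This gives \eqref{eq:malcev_form}.

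\textbf{Uniqueness.} Suppose $x^{a}y^{b}=x^{a'}y^{b'}$, with the obvious abbreviations for the ordered products. Applying $\pi_1$ kills the $y$-factors, since they lie in $\Gamma_{N,2}=\ker\pi_1$, and gives $\sum_i a_i\pi_1(x_i)=\sum_i a'_i\pi_1(x_i)$. Linear independence over $\ZZ$ then forces $a=a'$. Cancelling the common prefix $x^{a}$ on the left leaves $y^{b}=y^{b'}$ in the free abelian group $\Gamma_{N,2}$, so $b=b'$ because the $y_j$ form a basis.

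The only point needing care is that the $y_j$ can be chosen as a genuine $\ZZ$-basis of $\Gamma_{N,2}$ itself. This holds because $\Gamma_{N,3}=\{e\}$ makes $\Gamma_{N,2}=A_2\cong\ZZ^{n_2}$, as already recorded. Beyond this there is no real obstacle; the argument is routine.
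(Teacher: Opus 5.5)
Your proof is correct and follows essentially the same route as the paper: read off $a(g)$ from $\pi_1(g)$ in the basis $\pi_1(x_1),\dots,\pi_1(x_{n_1})$, observe that $(x_1^{a_1}\cdots x_{n_1}^{a_{n_1}})^{-1}g\in\ker\pi_1=\Gamma_{N,2}\cong\ZZ^{n_2}$, and expand that element in the basis $y_1,\dots,y_{n_2}$. Your uniqueness argument (apply $\pi_1$, cancel the common prefix on the left, use the basis of $\Gamma_{N,2}$) is just the explicit version of the paper's ``uniqueness follows similarly.''
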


\begin{proof}
Define $a(g)$ by requiring that $\pi_1(g)=\sum_i a_i(g)\pi_1(x_i)$.
Then $\big(x_1^{a_1}\cdots x_{n_1}^{a_{n_1}}\big)^{-1}\cdot g\in \Gamma_{N,2}$.
Therefore, there exist unique coefficients $b(g)\in\mathbb{Z}^{n_2}$ such that
\begin{equation}
\big(x_1^{a_1}\cdots x_{n_1}^{a_{n_1}}\big)^{-1}\cdot g
= y_1^{b_1}\cdots y_{n_2}^{b_{n_2}}\,.
\end{equation}
Uniqueness follows similarly.
\end{proof}

Let $S\subseteq \Gamma$ be a finite symmetric set of generators.
The next lemma, which is standard, relates the word distance to the Malcev coordinates.

\begin{lemma}\label{lemma:iteration_growth}
There exists $C\ge 1$ such that
\begin{enumerate}
    \item $|y_i^b|_S \le C \sqrt{|b|}$ for every $b\in \mathbb{Z}$ and $1\le i\le n_2$.

    \item $|b_i(g)| \le C |g|_S^2$ for every $g\in \Gamma_N$ and $1\le i\le n_2$.

    \item $|x_i^{a}|_S \le C|a|$ for every $a\in \mathbb{Z}$ and $1\le i\le n_1$.
\end{enumerate}
\end{lemma}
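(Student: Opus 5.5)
The plan is to prove the three items in the order (3), (1), (2), using only that $\Gamma_N$ has step two, so that $\Gamma_{N,2}$ is central in $\Gamma_N$ and is identified with $A_2$. Throughout, \eqref{csdcdcsc} is used to pass between words in $S$ and elements of $\Gamma_N$.

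\emph{Item (3).} This is immediate from the triangle inequality. We have $|x_i^a|_S\le |a|\,|x_i|_S$, so it suffices to take $C\ge\max_i|x_i|_S$.

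\emph{Item (1).} The idea is to realize $y_i^b$ as a combination of commutators of length $O(\sqrt{|b|})$.

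We start from the bilinear form $[\,\cdot\,,\,\cdot\,]_{\Gamma_N}$ of \eqref{dscsc}. Since the commutator subgroup $\gamma_2(\Gamma_N)$ is generated by the $[x_j,x_k]$, it has finite index in $\Gamma_{N,2}$ (this is the definition of the isolated series). Hence there is $M\in\NN$ such that each $y_i^M$ is a product $\prod_{j<k}[x_j,x_k]^{m_{jk}}$ with fixed integer exponents.

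In step two, commutators are bilinear and central, so $[x_j^p,x_k^q]=[x_j,x_k]^{pq}$, and this element has word length at most $2(p+q)\max_l|x_l|_S$.

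Now write $b=Mq+r$ with $0\le r<M$. Then
\[
y_i^b=y_i^r\prod_{j<k}[x_j,x_k]^{m_{jk}q}.
\]
For each factor $[x_j,x_k]^{m_{jk}q}$, set $p\defeq\lfloor\sqrt{|m_{jk}q|}\rfloor$ and write $m_{jk}q=\pm p^2+s$ with $|s|\le 2p+1$. This gives
\[
[x_j,x_k]^{m_{jk}q}=[x_j^{\pm p},x_k^{p}]\,[x_j^{\pm1},x_k]^{|s|}\,,
\]
where the first factor has length $O(p)$ and the second has length $O(|s|)=O(p)$. Altogether $|y_i^b|_S\le C\sqrt{|b|}+C$. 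Since $|y_i^b|_S\ge 1$ for $b\ne0$, the additive constant can be absorbed, giving $|y_i^b|_S\le C\sqrt{|b|}$.

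\emph{Item (2).} This is the main step. Let $g\in\Gamma_N$ with $|g|_S=k$, and write $g=s_1\cdots s_k$ with $s_j\in S$. Set $h_j\defeq s_1\cdots s_j\,\sigma(\pi(s_1\cdots s_j))^{-1}\in\Gamma_N$, so that $h_k=g$ and $h_0=e$. By \eqref{csdcdcsc} the increments $u_j\defeq h_{j-1}^{-1}h_j=\sigma(\cdot)s_j\sigma(\cdot)^{-1}$ range over a finite set $F\subseteq\Gamma_N$. Thus $g=u_1\cdots u_k$ with $u_j\in F$.

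Using the normal form \eqref{eq:malcev_form} and step-two multiplication,
\[
(x^a y^b)(x^{a'}y^{b'})=x^{a+a'}y^{b+b'+\beta(a,a')}\,,
\]
where $\beta$ is a $\ZZ$-bilinear form coming from reordering the $x$'s via central commutators. By induction along the product,
\[
b(g)=\sum_j b(u_j)+\sum_{j<l}\beta'(a(u_j),a(u_l))
\]
for some bilinear $\beta'$. Since $a(u_j)$ and $b(u_j)$ are bounded uniformly on $F$, this gives $|b(g)|\le Ck+Ck^2\le Ck^2$.

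The only care needed is to derive the multiplication formula correctly, namely to check that reordering $x_{j}^{a}x_{i}^{a'}$ produces the correction $[x_j,x_i]^{aa'}$ and that this is bilinear. This follows from centrality of $\Gamma_{N,2}$ in $\Gamma_N$. The obstacle in item (1) is only the bookkeeping of exponents, which the decomposition above handles.
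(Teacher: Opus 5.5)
Your proof is correct and follows essentially the same route as the paper: item (1) writes a fixed power of $y_i$ as a product of commutators $[x_j,x_k]$ and uses $[x_j^p,x_k^p]=[x_j,x_k]^{p^2}$ with division with remainder, item (2) rewrites $g$ as a word of length $O(|g|_S)$ in a fixed finite subset of $\Gamma_N$ and counts the $O(|g|_S^2)$ bounded central commutator corrections, and item (3) is the triangle inequality. Your explicit handling of non-square exponents in item (1) and your cocycle formula for $b(uv)$ in item (2) make precise steps the paper only sketches; one small slip is that the additive constant in item (1) is absorbed because $\sqrt{|b|}\ge 1$ for $b\neq 0$, not because $|y_i^b|_S\ge 1$.
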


\begin{proof}
By the definition of $\Gamma_{N,2}$, for every $1\le i\le n_2$ there exists a positive integer $k_i\in\mathbb{N}$ such that $y_i^{k_i}\in [\Gamma_N,\Gamma_N]$.
Hence $y_i^{k_i}$ can be written as a finite product of commutators of elements of $\Gamma_N$.
We may write these commutators as products of commutators of the form $[x_j,x_\ell]$, with $1\le j,\ell\le n_1$.
For each such commutator, we have
\begin{equation}
|[x_j,x_\ell]^b|_S \le C \sqrt{|b|}\big( |x_j|_S + |x_\ell|_S\big) \le C \sqrt{|b|} \,,
\end{equation}
which follows from the identity $[x_j^m,x_\ell^m]=[x_j,x_\ell]^{m^2}$.

Now write $b=qk_i+r$, with $0\le r<k_i$.
Since $\Gamma_{N,2}$ is central, $(y_i^{k_i})^q$ is a product of fixed commutators raised to the power $q$, while the remaining factor $y_i^r$ has uniformly bounded word length.
The previous estimate therefore gives
\begin{equation}
|y_i^b|_S \le C\sqrt{|q|}+C \le C\sqrt{|b|}\,.
\end{equation}
We now prove item $(2)$.
Let
\begin{equation}
T\defeq\{x_1^{\pm1},\dots,x_{n_1}^{\pm1},y_1^{\pm1},\dots,y_{n_2}^{\pm1}\}\,.
\end{equation}
By the Malcev-coordinate representation, $T$ is a finite symmetric generating set of $\Gamma_N$.
Up to increasing the constant $C$, we have
\begin{equation}
|g|_T\le C|g|_S
\qquad \text{for every }g\in\Gamma_N\,.
\end{equation}

Write $g\in \Gamma_N$ as a word of length $L=|g|_T$ in the alphabet $T$.
Reorder this word into Malcev normal form by moving all horizontal generators $x_j^{\pm1}$ to the left and all central generators $y_i^{\pm1}$ to the right.
Since $\Gamma_N$ has step two, every commutator produced during this reordering lies in the central subgroup $\Gamma_{N,2}$.
There are at most $C L^2$ such commutations, and each of them contributes a uniformly bounded amount to each central coordinate $b_i(g)$.
The central letters already present in the word contribute at most $L$.
Therefore
\begin{equation}
|b_i(g)|\le C L^2 \le C |g|_S^2
\qquad \text{for every }1\le i\le n_2\,.
\end{equation}

Item (3) is trivial in the case $S=T$, and, in general, follows from  the bi-Lipschitz equivalence of word-length functions with respect to different sets of generators.
\end{proof}

\subsection{Auxiliary distance}
\label{subsec:auxiliary_distance}
We keep working in the step-two framework of Subsection \ref{subsec:Malcev}: $\Gamma$ and $S$ are as in Subsection \ref{subsec:virtually}, with the additional assumption that $\Gamma_{N,3}=\{e\}$.

We adopt the notation of Section \ref{sec:poly}.
Let $E$ be the polytope in $A_1\otimes_\ZZ\RR$ with extreme set given by \eqref{vefdsc}.
Let $v_0$ be the special vertex selected in Subsection \ref{subsec:special_vertex}.
Up to relabeling $t_0, \ldots, t_m\in \Gamma$, we can assume
\begin{equation}
  \frac{\pi_1(t_0)}{l_0}=v_0\, . 
\end{equation}
We also fix a Malcev basis as in Subsection \ref{subsec:Malcev} and
we assume that the special vertex $v_0$ is aligned with the last Malcev coordinate in the first layer, namely $x_{n_1}$.
More precisely, $v_0\in \langle\pi_1(x_{n_1})\rangle$.


Let $u_i$, for $i=0,\dots,m$, be the horizontal projection of $t_i$, that is, writing $t_i$ in Malcev coordinates,
\begin{equation}\label{eq:t_i u_i}
t_i=
\underbrace{x_1^{a_{1}(t_i)}\cdots x_{n_1}^{a_{n_1}(t_i)}}_{\defeq u_i}
\cdot \, \underbrace{y_1^{b_{1}(t_i)}\cdots y_{n_2}^{b_{n_2}(t_i)}}_{\defeq z_i}\,.
\end{equation}
Since $z_i$ is central, by \eqref{vfsdscsx} and Lemma \ref{lemma:iteration_growth} we have
\begin{equation}\label{vfsdscsx1}
     |u_i^k|_S\le |t_i^k|_S+|z_i^k|_S\le  |k| l_i + C\sqrt{|k|}\qquad\text{for every $k\in\ZZ$ and $i= 0,\dots, m$}\,.
\end{equation}

Now we define the symmetric set 
\begin{equation}
    S_\omega\defeq \{u_0^{\pm 1},\dots, u_m^{\pm 1}\}\cup \{x_1^{\pm 1},\dots, x_{n_1}^{\pm 1}, y_1^{\pm 1}, \ldots , y_{n_2}^{\pm 1}\}\,.
\end{equation}
Notice that $u_i^{-1}\in S_\omega$ may fail to be horizontal, with the exception of $i=0$, where $u_0^{\pm 1}$ are both horizontal, being in the direction of the last element of the Malcev basis. Define also the symmetric weight function $\omega:S_\omega\rightarrow\RR$ as
\begin{alignat}{2}
        \omega(u_i^{\pm 1})&\defeq l_i\qquad&&\text{for $i=0,\dots,m$}\,,
        \\
        \omega(x_j^{\pm 1})&\defeq |x_j|_S\qquad&&\text{for $j=1,\dots,n_1$}\,,
        \\
        \omega(y_j^{\pm 1})&\defeq |y_j|_S\qquad&&\text{for $j=1,\dots,n_2$}\,.
\end{alignat}

  We define a weighted word-length norm as follows
\begin{equation}\label{vrefds}
    |g|_{\omega}\defeq\min\Big\{ \sum_i \omega(s_i) : g=\prod_i s_i\,, s_i\in S_\omega\}
    \qquad \text{for }g\in \Gamma_N\,,
\end{equation}
which induces naturally a left-invariant distance 
\begin{equation}
    d_{\omega}(g,h)\defeq |g^{-1}h|_{\omega}
    \qquad \text{for }g,h\in \Gamma_N\,.
\end{equation}
Clearly, there exists a constant $C\ge 1$ such that
\begin{equation}\label{eq:mild_comp}
C^{-1}|g|_{S}\le |g|_{\omega}\le C|g|_{S}\qquad\text{for every $g\in\Gamma_N$}\,.
\end{equation}
However, this estimate is very crude. The goal of this subsection is to prove a refined version at large scales, see Proposition \ref{vdfvsd} below.

\begin{lemma}
    For every $i=0,\ldots, m$, we have
    \begin{equation}\label{distu0}
        |u_i^n|_\omega = l_i |n|
        \quad
        \text{for every $n\in \ZZ$}\,.
    \end{equation}
\end{lemma}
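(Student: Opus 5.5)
The plan is to prove the two inequalities separately: the upper bound is immediate from the definition of $|\cdot|_\omega$, and the lower bound will come from a calibrating homomorphism $\Gamma_N\to\RR$ that factors through $\pi_1$.

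\emph{Upper bound.} Since $u_i^{\pm1}\in S_\omega$ with $\omega(u_i^{\pm1})=l_i$, the word $u_i^n=(u_i^{\operatorname{sgn} n})^{|n|}$ shows that $|u_i^n|_\omega\le l_i|n|$.

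\emph{Choice of the calibrating functional.} Set $v\defeq \pi_1(t_i)/l_i\in\Ext(E)$.
\begin{itemize}
\item By Lemma \ref{cdescdsacsc}, $E$ is a compact convex symmetric set with nonempty interior, hence the unit ball of $|\cdot|_E$.
\item An extreme point cannot be interior, so $|v|_E=1$.
\item By Hahn--Banach there is a linear $\phi:A_1\otimes_\ZZ\RR\to\RR$ with $\phi(v)=1$ and $\phi\le 1$ on $E$. Since $E$ is symmetric, $|\phi(w)|\le |w|_E$ for every $w$.
\end{itemize}
Define $f\defeq\phi\circ\pi_1:\Gamma_N\to\RR$. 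Because $\pi_1$ is a homomorphism into an abelian group, $f$ is a group homomorphism.

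\emph{Key step: $|f(s)|\le\omega(s)$ for every $s\in S_\omega$.} We check the three kinds of generators.
\begin{itemize}
\item For $s=u_j^{\pm1}$: since $z_j$ is central and lies in $\Gamma_{N,2}$, we have $\pi_1(u_j^{\pm1})=\pm\pi_1(t_j)$. Moreover $\pi_1(t_j)/l_j\in E$ by \eqref{vefdsc}. Hence $|f(s)|\le l_j=\omega(s)$. This holds even though $u_j^{-1}$ need not be horizontal, since only its $\pi_1$-image matters.
\item For $s=x_j^{\pm1}$: by \eqref{eq:E-S}, $|f(s)|\le|\pi_1(x_j)|_E\le|x_j|_S=\omega(s)$.
\item For $s=y_j^{\pm1}$: $\pi_1(y_j)=0$, so $f(s)=0$.
\end{itemize}

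\emph{Lower bound.} Let $g=\prod_k s_k$ be any word with $s_k\in S_\omega$. Then $f(g)=\sum_k f(s_k)\le\sum_k\omega(s_k)$, and minimizing over words gives $|f(g)|\le|g|_\omega$. Applying this to $g=u_i^n$ gives
\begin{equation*}
|u_i^n|_\omega\ge |f(u_i^n)|=|n|\,|\phi(\pi_1(t_i))|=|n|\,l_i\,\phi(v)=l_i|n|.
\end{equation*}
Combined with the upper bound, this proves \eqref{distu0}.

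The only delicate point is the existence of a supporting functional at $v$ with value exactly $1$, that is, $|v|_E=1$. This follows from $v$ being an extreme point of a convex body (nonempty interior) and Hahn--Banach in finite dimension. The verification of the three generator types is otherwise direct.
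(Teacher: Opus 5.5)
Your proof is correct and is essentially the paper's argument: the upper bound comes from the definition, and the lower bound from the same generator-by-generator check $|\pi_1(s)|_E\le\omega(s)$ for $s\in S_\omega$, using $|\pi_1(u_j^{\pm1})|_E=l_j$ and \eqref{eq:E-S}. The only difference is cosmetic: the paper skips the Hahn--Banach functional and uses subadditivity of $|\cdot|_E$ directly, $|n|l_i=|\pi_1(u_i^n)|_E\le\sum_j|\pi_1(s_j)|_E\le\sum_j\omega(s_j)=|u_i^n|_\omega$, which relies on the same fact $|\pi_1(t_i)/l_i|_E=1$ that you justified.
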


\begin{proof}
The inequality  $(\le)$ is by definition of $|\,\cdot\,|_\omega$. We prove now the $(\ge)$ inequality, where we can of course assume that $n\ne 0$.
Write $u_i^n=s_1\cdots s_p$ such that $s_j\in S_\omega$ and $|u_i^n|_\omega=\omega(s_1)+\cdots + \omega(s_p)$. 
We then have
\begin{equation}
|n|l_i = |\pi_1(u_i^n)|_E
\le \sum_{j=1}^p |\pi_1(s_j)|_E
\le \sum_{j=1}^p\omega(s_j)
=|u_i^n|_\omega\,,
\end{equation}
where we used also \eqref{eq:E-S}.
\end{proof}

\begin{lemma}\label{lift}
There exists $C>0$ such that the following holds.
For every $v\in A_1$, there exists $\widehat v\in\Gamma_N$ such that
\begin{equation}
\pi_1(\widehat v)=v\qquad\text{and}\qquad |\widehat v|_\omega\le |v|_E+C\,.
\end{equation}
The same statement holds for $|\,\cdot\,|_S$ in place of $|\,\cdot\,|_\omega$.
\end{lemma}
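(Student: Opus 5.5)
The plan is to write $v$ as a nonnegative combination of the extreme points of $E$ lying in a single facet-cone, round down the coefficients, and lift each rounded piece as a power of the corresponding $u_i$ (for $|\cdot|_\omega$) or $t_i$ (for $|\cdot|_S$). By Lemma \ref{cdescdsacsc} and \eqref{vefdsc}, the extreme points of $E$ are the finitely many vectors $\pm\pi_1(t_i)/l_i=\pm\pi_1(u_i)/l_i$. For $v\in A_1$ with $v\ne0$, the point $v/|v|_E$ lies on $\partial E$, hence in some face of $E$. By Carathéodory's theorem applied within that face, we can write
\begin{equation}
v=\sum_{i\in I}\lambda_i\,\epsilon_i\frac{\pi_1(t_i)}{l_i},\qquad \lambda_i\ge0,\quad \sum_{i\in I}\lambda_i=|v|_E,\quad \epsilon_i\in\{\pm1\},
\end{equation}
with $|I|\le n_1$. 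Set $k_i\defeq\lfloor \lambda_i/l_i\rfloor\in\NN$ and $r\defeq v-\sum_{i\in I}\epsilon_ik_i\pi_1(u_i)$. Then $r\in A_1$ and
\begin{equation}
|r|_E\le\sum_{i\in I}(\lambda_i/l_i-k_i)\,l_i\le \sum_{i\in I} l_i\le C,
\end{equation}
uniformly in $v$, since there are finitely many $t_i$.

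Next, lift the remainder $r$. It is a lattice point of $A_1$ of bounded $|\cdot|_E$-norm, and such points form a finite set. We can therefore pick a fixed lift $\widehat r\defeq x_1^{r_1}\cdots x_{n_1}^{r_{n_1}}$, where the $r_j$ are the coordinates of $r$ in the basis $\pi_1(x_j)$. Its $|\cdot|_\omega$-length and $|\cdot|_S$-length are bounded by a constant depending only on the finite set. Define
\begin{equation}
\widehat v\defeq \Big(\prod_{i\in I}u_i^{\epsilon_ik_i}\Big)\,\widehat r.
\end{equation}
Since $\pi_1$ is a homomorphism to the abelian group $A_1$, we get $\pi_1(\widehat v)=v$. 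By subadditivity and \eqref{distu0}, each factor satisfies $|u_i^{\epsilon_ik_i}|_\omega\le l_ik_i\le\lambda_i$, so
\begin{equation}
|\widehat v|_\omega\le\sum_{i\in I}\lambda_i+C=|v|_E+C.
\end{equation}

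For $|\cdot|_S$, use $t_i^{\epsilon_ik_i}$ instead of $u_i^{\epsilon_i k_i}$; these have the same $\pi_1$-image. Then \eqref{vfsdscsx} gives $|t_i^{\epsilon_ik_i}|_S\le k_il_i+2C_\sigma$. Summing over at most $n_1$ indices gives $|\widehat v|_S\le|v|_E+n_1\cdot2C_\sigma+C$. Note that using $u_i$ with \eqref{vfsdscsx1} would instead produce an unbounded $\sqrt{k}$ error, which is why the $t_i$ are the right choice here.

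The only point needing care is that the decomposition must use a \emph{bounded} number of extreme points with total weight exactly $|v|_E$. This is why we use a face of $E$ together with Carathéodory, rather than an arbitrary convex representation. It is also why the $t_i$ rather than the $u_i$ must be used for $|\cdot|_S$. Everything else is routine.
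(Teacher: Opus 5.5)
Your proof is correct and follows essentially the same argument as the paper. You decompose $v$ along the extreme points $\pm\pi_1(u_i)/l_i$ with total weight $|v|_E$, floor the coefficients, and lift the bounded lattice remainder by powers of the $x_j$; for $|\cdot|_S$ you switch to the $t_i$ and use \eqref{vfsdscsx}. The only difference is your face/Carath\'eodory step, which is harmless but unnecessary: $\Ext(E)$ is finite, so any convex representation already has total weight exactly $|v|_E$ and involves at most $2(m+1)$ factors, and the paper uses this directly.
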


\begin{proof}
If $v=0$, it is enough to take $\widehat v=e$.
We therefore assume that $v\ne0$.
By \eqref{vefdsc} and the definition of $u_i$, we can write
\begin{equation}
v=
\sum_{i=0}^m \frac{\lambda_i^+ |v|_E}{l_i}\pi_1(u_i)
+
\sum_{i=0}^m \frac{\lambda_i^- |v|_E}{l_i}\pi_1(u_i^{-1})\,,
\end{equation}
where the coefficients $\lambda_i^\pm$ are non-negative and satisfy $\sum_i(\lambda_i^++\lambda_i^-)=1$.
We define the integer coefficients
\begin{equation}
c_i^\pm\defeq \left\lfloor \frac{\lambda_i^\pm |v|_E}{l_i}\right\rfloor
\qquad\text{for } i=0,\dots,m\,.
\end{equation}
Then
\begin{equation}
\left|
v-\sum_{i=0}^m c_i^+\pi_1(u_i)-\sum_{i=0}^m c_i^-\pi_1(u_i^{-1})
\right|_E
\le 2(m+1)\,.
\end{equation}
Since the vector inside the norm belongs to the lattice $A_1$, there exist integers $a_1,\dots,a_{n_1}$ such that
\begin{equation}
v-\sum_{i=0}^m c_i^+\pi_1(u_i)-\sum_{i=0}^m c_i^-\pi_1(u_i^{-1})
=
\sum_{j=1}^{n_1} a_j\pi_1(x_j)\,.
\end{equation}
By equivalence of norms on $A_1\otimes_\ZZ\RR$ and since $\pi_1(x_1),\dots,\pi_1(x_{n_1})$ form a basis of $A_1$, we have $|a_j|\le C$ for every $j$, with $C$ independent of $v$.

We define
\begin{equation}
\widehat v
\defeq
x_1^{a_1}\cdots x_{n_1}^{a_{n_1}}
u_0^{c_0^+}\cdots u_m^{c_m^+}
u_0^{-c_0^-}\cdots u_m^{-c_m^-}\,.
\end{equation}
By construction, $\pi_1(\widehat v)=v$.
Moreover,
\begin{equation}
|\widehat v|_\omega
\le C+\sum_{i=0}^m l_i(c_i^++c_i^-)
\le C+|v|_E\,.
\end{equation}

The proof for $|\,\cdot\,|_S$ is analogous.
One uses $t_i$ in place of $u_i$ in the definition of $\widehat v$.
Since $\pi_1(t_i)=\pi_1(u_i)$, the equality $\pi_1(\widehat v)=v$ is unchanged.
The estimate follows from \eqref{vfsdscsx}, namely
\begin{equation}
|t_i^k|_S\le |k|l_i+2C_\sigma
\qquad \text{for every }k\in\ZZ\,,
\end{equation}
and from the fact that the number of factors is finite.
\end{proof}

\begin{prop}\label{vdfvsd}
    It holds that 
    \begin{align}
        |g|_{\omega} \le |g|_S +C |g|_S^{3/4}\quad\text{and}\quad|g|_{S} \le |g|_\omega +C |g|_\omega^{3/4}\qquad\text{for every }g\in \Gamma_N\,.
    \end{align}
\end{prop}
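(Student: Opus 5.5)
The plan is a block-decomposition argument, and it treats the two inequalities symmetrically. Fix $g\in\Gamma_N$ and set $L\defeq|g|_S$; the case $L\le C$ is trivial by \eqref{eq:mild_comp}. Write $g=s_1\cdots s_L$ with $s_i\in S$. Choose an integer $k\approx L^{2/3}$ (any $k\approx L^{1/2}$ to $L^{3/4}$ would suffice) and cut the word into $k$ consecutive blocks of length at most $\lceil L/k\rceil$.

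The blocks need not lie in $\Gamma_N$, so we fix this with the section $\sigma$. Let $w_j$ be the product of the first $j$ blocks, and set
\begin{equation}
h_j\defeq \sigma(\pi(w_{j-1}))\,(\text{$j$-th block})\,\sigma(\pi(w_j))^{-1}\in\Gamma_N\,.
\end{equation}
Then $g=h_1\cdots h_k$, and by \eqref{csdcdcsc} each $h_j$ satisfies $|h_j|_S\le L/k+C$. By \eqref{eq:E-S} we also get $|\pi_1(h_j)|_E\le L/k+C$.

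Next we replace each block by a cheap $\omega$-word with the same horizontal projection. Lemma \ref{lift} gives $\widehat h_j\in\Gamma_N$ with $\pi_1(\widehat h_j)=\pi_1(h_j)$ and $|\widehat h_j|_\omega\le L/k+C$. Since $\Gamma_{N,2}$ is central, $h_j=\widehat h_j z_j$ with $z_j\in\Gamma_{N,2}$. Hence
\begin{equation}
g=\widehat h_1\cdots\widehat h_k\, z\,,\qquad z\defeq z_1\cdots z_k\,.
\end{equation}
To bound $z$, apply Lemma \ref{lemma:iteration_growth}(2) to $h_j$ and to $\widehat h_j$, the latter via \eqref{eq:mild_comp}. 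This gives $|b(z_j)|\le C(L/k)^2$, hence $|b(z)|\le C L^2/k$.

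It remains to pay for the central correction in $|\cdot|_\omega$. I will prove an $\omega$-analogue of Lemma \ref{lemma:iteration_growth}(1), namely $|y_i^b|_\omega\le C\sqrt{|b|}$. The proof is the same commutator argument: $[x_j^m,x_\ell^m]=[x_j,x_\ell]^{m^2}$, and the $x_j^{\pm1}$ lie in $S_\omega$. Since there are finitely many $y_i$, this yields $|z|_\omega\le C L/\sqrt k$. Summing up,
\begin{equation}
|g|_\omega\le k\Big(\frac{L}{k}+C\Big)+C\frac{L}{\sqrt k}\le L+Ck+C\frac{L}{\sqrt k}\le L+CL^{2/3}\,,
\end{equation}
which is stronger than the claimed $L+CL^{3/4}$.

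The reverse inequality follows the same scheme. Take an optimal $\omega$-word for $g$ of total weight $L'=|g|_\omega$, with letters in $S_\omega\subseteq\Gamma_N$, and cut it into $k$ consecutive blocks of weight at most $L'/k+\max\omega$. No section correction is needed here, since all letters already lie in $\Gamma_N$. The key check is that $|\pi_1(s)|_E\le\omega(s)$ for each $s\in S_\omega$:
\begin{itemize}
\item for $s=x_j^{\pm1}$ this is \eqref{eq:E-S};
\item for $s=y_j^{\pm1}$ we have $\pi_1(s)=0$;
\item for $s=u_i^{\pm1}$ we have $\pi_1(u_i)/l_i\in E$.
\end{itemize}
Consequently each block $h_j$ has $|\pi_1(h_j)|_E\le L'/k+C$. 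Lifting with the $|\cdot|_S$ version of Lemma \ref{lift} produces $\widehat h_j$ with $|\widehat h_j|_S\le L'/k+C$. The central error is bounded exactly as before, now using Lemma \ref{lemma:iteration_growth}(1),(2) directly in $|\cdot|_S$, together with \eqref{eq:mild_comp} to control the $b$-coordinates of the $\omega$-blocks.

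The only delicate point is the central bookkeeping. The quadratic bound on $b(h_j)$ has to be applied per block, and the quadratic loss $k(L/k)^2$ must be balanced against the square-root cost of central elements and the additive $C$ per block. I expect the needed ingredients to be the $\omega$-version of item (1) and the comparability \eqref{eq:mild_comp}, and no more.
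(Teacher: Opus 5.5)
Your proposal is correct and is essentially the paper's argument: you cut the word into blocks, correct each block into $\Gamma_N$ with the section $\sigma$, lift each block via Lemma \ref{lift}, and pay for the accumulated central discrepancy with items (1)--(2) of Lemma \ref{lemma:iteration_growth}. The differences are only refinements. Using $k\approx L^{2/3}$ blocks instead of the paper's $\sqrt{L}$ balances the per-block additive constants against the central cost and yields the sharper error $CL^{2/3}$. In the reverse direction you use the $|\,\cdot\,|_S$-version of Lemma \ref{lift} (after checking $|\pi_1(s)|_E\le\omega(s)$ on $S_\omega$) instead of regrouping the exponents of the optimal $\omega$-word, which avoids the $C\sqrt{|q_\alpha|}$ per-block losses from \eqref{vfsdscsx1}.
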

\begin{proof}
We start by proving the first inequality.
    Let $g\in\Gamma_N$ with $|g|_S=k$, so that $g=s_1\cdots s_k$, with $s_i\in S$. Then we can group the generators into $g=w_1\cdots w_{q}$, with $q\le C\sqrt{k}$, where $w_1,\dots,w_q\in\Gamma$ are such that 
    $|w_i|_S\le C\sqrt{k}$ and $\sum_{i=1}^q |w_i|_S= k$.
    Notice that $w_i\notin \Gamma_N$, in general. However, we set  
    \begin{equation}
    \begin{split}
        w_1' &\defeq w_1 \sigma(\pi(w_1))^{-1}
        \\
         w_i'&\defeq \sigma(\pi(w_1\cdots w_{i-1})) w_i\sigma(\pi(w_1\cdots w_{i-1}w_i))^{-1} \quad i=2, \ldots, q-1
         \\
         w_q '& \defeq \sigma(\pi(w_1\ldots w_{q-1})) w_q
    \end{split}
    \end{equation}
    so that $w_i'\in \Gamma_N$ and  $g'\defeq w_1'\cdots w_q'=g$. By
 \eqref{csdcdcsc},  we have
    \begin{equation}
        |w_i'|_S\le |w_i|_S +2C_\sigma\le C\sqrt{k}\,.
    \end{equation}
    We apply Lemma \ref{lift} to $\pi_1(w_i')$ to obtain $w_i''$ with $\pi_1(w_i'')=\pi_{1}(w_i')$ and 
    \begin{equation}
        |w_i''|_\omega\le |\pi_1(w_i')|_E+C\le |w_i'|_S+C\le C\sqrt{k}\,
    \end{equation} where the next-to-last inequality is due to \eqref{eq:E-S}. 
    We  set $g''\defeq w_1''\cdots w_q''$, so that
    \begin{equation}
        |g''|_{\omega}\le \sum_{i=1}^q |w_i''|_\omega\le \sum_{i=1}^q(|w_i|_S+C) \le k+C\sqrt{k}\,.
    \end{equation}
    By construction, $a_j(g'')=a_j(g')$, whereas  item 2 of Lemma \ref{lemma:iteration_growth} implies
    \begin{equation}
        \begin{split}
        |b_j(g'') - b_j(g')|
        &\le \sum_{i=1}^q |b_j(w_i'') - b_j(w_i')|
        \le \sum_{i=1}^q |b_j(w_i'')| + |b_j(w_i')|
       \\& \le C\sum_{i=1}^q |w_i''|_S^2 + |w_i'|_S^2
        \le C k^{3/2}\,.\label{eq:keyb_squareroot}
        \end{split}
    \end{equation}
    Hence, by item (1) of Lemma \ref{lemma:iteration_growth}, we deduce that
     \begin{equation}
        d_S(g,g'')=d_S(g',g'')\le C n_2\sqrt{Ck^{3/2}}\le C k^{3/4}\,.
    \end{equation}
    All in all,
    \begin{equation}
        |g|_{\omega}\le  |g''|_{\omega}+ d_{\omega}(g,g'')\le k+C\sqrt{k}+Ck^{3/4}\le |g|_S+C |g|_S^{3/4}\, .
    \end{equation}

 We now show the second inequality. 
    Now take $g\in\Gamma_N$  with $|g|_{\omega}=k$ (notice that now $k$ is not an integer, but it anyhow ranges in a discrete set).
    As before, we write
    $g=w_1\cdots w_{q}$ with $q\le C\sqrt{k}$, where $w_1,\dots, w_q\in\Gamma_N$ are such that $|w_i|_{\omega}\le C\sqrt{k}$ 
      and $\sum_{i=1}^q |w_i|_{\omega}=k$.  To keep a notational parallel with the proof above, we set $w_i'\defeq w_i$ (here, indeed, the $w_i$ are already in $\Gamma_N$). For every $i=1,\dots,q$, each $w_i'$ can be written as a word of letters in $S_\omega$ attaining the minimum in \eqref{vrefds}. If $q_\alpha\in\ZZ$ is the number of occurrences of $u_\alpha$ minus the number of occurrences of $u_\alpha^{-1}$ in such word and similarly for $r_\beta\in\ZZ$, we write
      \begin{equation}
          w_i''\defeq \prod_{\alpha=0}^m u_\alpha^{q_\alpha}\prod_{\beta=1}^{n_1} x_\beta^{r_\beta}\,.
      \end{equation}
        Notice that $\pi_1(w_i'')=\pi_1(w_i')$. We compute, using \eqref{vfsdscsx1}
      \begin{align}
          |w_i''|_S&\le \sum_{\alpha=0}^m |u_\alpha^{q_\alpha}|_S+\sum_{\beta=1}^{n_1}|x_\beta^{r_\beta}|_S\le \sum_{\alpha=0}^m (|q_\alpha|\omega(u_\alpha)+C\sqrt{|q_\alpha|})+\sum_{\beta=1}^{n_1}|r_\beta|\omega(x_\beta)\\
          &\le |w_i'|_\omega + C\sum_{\alpha=0}^m \sqrt{|q_\alpha|}\le |w_i'|_\omega+ Ck^{1/4}\,, 
      \end{align}
      where we used that $\sum_{\alpha=0}^m \sqrt{|q_\alpha|}\le C\sqrt{\sum_{\alpha=0}^m |q_\alpha|}\le C\sqrt{|w_i'|_\omega}$.  Now we set $g''\defeq w_1''\cdots w_q''$, and the proof follows by arguing as above. The key point is the following version of \eqref{eq:keyb_squareroot}:
      \begin{equation}
      |b_j(g'') - b_j(g')|
      \le C \sum_{i=1}^q\big(|w_i''|_S^2 + |w_i'|_S^2\big) \le C \sum_{i=1}^q\big(|w_i''|_\omega^2 + |w_i'|_\omega^2\big)\,,
      \end{equation}
      where, in the second inequality, we have increased the constant $C$ and used \eqref{eq:mild_comp}.
    \end{proof}

\subsection{Kantorovich potential}
\label{subsec:Kantorovich}
We work in the step-two framework of Subsections \ref{subsec:Malcev} and \ref{subsec:auxiliary_distance}. 
We define a Kantorovich potential in order to obtain the desired transport bound by duality.

The first ingredient is a linear functional 
\begin{equation}
    \ell:A_1\otimes_\ZZ\RR\to\RR
\end{equation}
satisfying
\begin{equation}\label{eq:prop_ell}
\ell(v_0)=1,\qquad
|\ell(v)|<1-\eps_0 \qquad\text{for every $v\in \Ext(E)\setminus\{\pm v_0\}$}\,,
\end{equation}
for some $0<\eps_0<1$. It exists as $\Ext(E)$ is finite; see Lemma \ref{cdescdsacsc}.
Notice that
\begin{equation}\label{eq:bound_ell}
|\ell \circ \pi_1(g)| \le |\pi_1(g)|_E \le |g|_S
\qquad\text{for every } g\in \Gamma_N\,,
\end{equation}
where in the last inequality we used \eqref{eq:E-S}.

Thanks to Lemma \ref{dscc}, recalling \eqref{dscsc1}, we can choose a linear operator 
\begin{equation}\label{eq:beta}
  \beta:A_2\otimes_\ZZ\RR\to\RR  
\end{equation}
such that the linear operator
\begin{equation}
\alpha:A_1\otimes_\ZZ \RR \to \RR,
\qquad
\alpha(\,\cdot \,)\defeq\beta([v_0\, ,\cdot \,]_{\Gamma_N})
\end{equation}
is non zero.
Recall that, in our step-two framework, $A_2$ is identified with $\Gamma_{N,2}$.
Hence we can define $\beta(g)$ for every $g\in\Gamma_N$ by means of the Malcev coordinates 
\begin{equation}
\beta(g)\defeq \beta\big(y_1^{b_1(g)}\cdots y_{n_2}^{b_{n_2}(g)}\big)
\qquad\text{for } g\in\Gamma_N\,.
\end{equation}
Notice that $\beta(g)$ is linear as a function of the second-layer Malcev coordinate $b$, but it is not a group homomorphism unless $\Gamma_N$ is abelian.

We use the same convention as in Subsection \ref{subsec:auxiliary_distance}, namely $v_0\in \langle\pi_1(x_{n_1})\rangle$.

\begin{defn}[Potential]
\label{def:Psi}
Fix a scale $R>0$ and $0<\eta<1$. We define $\Psi=\Psi_{\eta,R}:\Gamma_N \to \RR$ as
\begin{equation}   
\Psi(g)\defeq \ell(\pi_1(g))+ \frac{\eta}{R^2} \alpha(\pi_1(g))\, \beta(g)\, .
\end{equation} 
\end{defn}

Notice that, in Malcev coordinates, $\ell\circ\pi_1$ and $\alpha\circ\pi_1$ depend only on $a\in\mathbb{Z}^{n_1}$, whereas $\beta$ depends only on $b\in\mathbb{Z}^{n_2}$.
We finally remark that $\ell$ is independent of the fixed Malcev basis, although it depends on the non-unique choice of $v_0$, whereas $\alpha$ and $\beta$ depend on the fixed Malcev basis.
This is not an issue, since this dependence disappears at large scales.

\begin{lemma}\label{LipLem}
For $\eta\in(0,1)$ small enough, depending only on $\Gamma$ and $S$, the potential $\Psi=\Psi_{\eta,R}$ is $1$-Lipschitz with respect to $d_\omega$ on $B_R^{d_\omega}(e)$, for every $R\ge 1$.
\end{lemma}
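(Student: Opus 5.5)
The plan is to reduce the Lipschitz bound to a generator-by-generator check, and to use the step-two structure to control the quadratic correction $\frac{\eta}{R^2}\alpha\beta$ along each generator.

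\textbf{Reduction to generators.} Since $|\cdot|_\omega$ is a weighted word length, $d_\omega$ is a path metric on the Cayley graph of $\Gamma_N$ with respect to $S_\omega$, where the edge $(g,gs)$ has length $\omega(s)$. Take $g,h\in B_R^{d_\omega}(e)$ and a $d_\omega$-geodesic word $h=g s_1\cdots s_p$ with $s_i\in S_\omega$. Every intermediate vertex $p_j$ satisfies $|p_j|_\omega\le |g|_\omega+d_\omega(g,h)\le 3R$. It therefore suffices to prove
\begin{equation}
|\Psi(gs)-\Psi(g)|\le \omega(s)\qquad\text{for every } s\in S_\omega \text{ and } g\in B^{d_\omega}_{3R+C}(e)\,,
\end{equation}
and then sum along the geodesic.

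\textbf{Decomposing the increment.} Writing $\alpha$ for $\alpha\circ\pi_1$, we have
\begin{equation}
\Psi(gs)-\Psi(g)=\ell(\pi_1(s))+\frac{\eta}{R^2}\Big[\alpha(s)\,\beta(gs)+\alpha(g)\big(\beta(gs)-\beta(g)\big)\Big]\,.
\end{equation}
In the step-two setting, bringing $g\cdot s$ into Malcev normal form gives
\begin{equation}
b(gs)=b(g)+b(s)+Q\big(a(g),a(s)\big)\,,
\end{equation}
where $Q$ is a bilinear correction coming from the commutators produced when the letters of $s$ are moved past the letters $x_j^{a_j(g)}$. Combining this with \eqref{eq:mild_comp}, Lemma~\ref{lemma:iteration_growth}(2) and the bound $|a(g)|\le C|g|_S$, the following estimates hold on $B_{3R+C}$:
\begin{equation}
|\alpha(g)|\le CR,\qquad |\beta(g)|,\,|\beta(gs)|\le CR^2,\qquad |\beta(gs)-\beta(g)|\le C(R+1)\,.
\end{equation}
Hence the bracket, multiplied by $\frac{\eta}{R^2}$, is at most $C\eta$. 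It remains to show that the linear term $\ell(\pi_1(s))$ leaves enough slack to absorb this, or that the correction vanishes identically.

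\textbf{Case analysis on $s$.}
\begin{enumerate}
\item $s=u_0^{\pm1}$ or $s=x_{n_1}^{\pm1}$. This is the delicate case and the main obstacle, since here $|\ell(\pi_1(s))|=\omega(s)$ with no slack. The convention $v_0\in\langle \pi_1(x_{n_1})\rangle$ resolves it. These $s$ are pure powers of the last Malcev generator $x_{n_1}$, so $g s$ is already in Malcev normal form and $b(gs)=b(g)$, i.e.\ $\beta(gs)=\beta(g)$. Moreover $\alpha(s)$ is a multiple of $\beta([v_0,v_0]_{\Gamma_N})=0$. So the correction vanishes exactly, and the increment is $\ell(\pi_1(s))$, of modulus $\le\omega(s)$ by \eqref{eq:bound_ell}.
\item $s=u_i^{\pm1}$ with $i\ge1$. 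By \eqref{eq:prop_ell}, $|\ell(\pi_1(s))|\le(1-\eps_0)\,l_i=(1-\eps_0)\,\omega(s)$.
\item $s=x_j^{\pm1}$ with $j<n_1$. Here $\pi_1(x_j)$ is not parallel to $v_0$. Since $\ell\le1$ on $E$ with equality only at $v_0$, and there are finitely many such $j$, there is $\eps_1>0$ with $|\ell(\pi_1(x_j))|\le(1-\eps_1)\,|\pi_1(x_j)|_E\le(1-\eps_1)\,\omega(x_j)$.
\item $s=y_j^{\pm1}$. Here $\ell(\pi_1(s))=\alpha(s)=0$, and $\beta(gs)-\beta(g)=\pm\beta(y_j)$ is bounded. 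So the increment is at most $\frac{\eta}{R^2}\,CR\le C\eta$, while $\omega(s)\ge1$.
\end{enumerate}

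\textbf{Conclusion.} Choose $\eta\le \min(\eps_0,\eps_1)\,\min_{s\in S_\omega}\omega(s)/C$. Then in every case the $C\eta$ error is absorbed, so $|\Psi(gs)-\Psi(g)|\le\omega(s)$ uniformly in $R\ge1$. Summing along the geodesic from the first step gives that $\Psi_{\eta,R}$ is $1$-Lipschitz with respect to $d_\omega$ on $B_R^{d_\omega}(e)$.
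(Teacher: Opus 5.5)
Your proof is correct and follows essentially the same route as the paper. You use the same reduction to single steps $g\mapsto gs$, $s\in S_\omega$, the same decomposition of $\Psi(gs)-\Psi(g)$ with the bounds $|\alpha(g)|\le CR$, $|\beta(gs)|\le CR^2$, $|\beta(gs)-\beta(g)|\le CR$, and the same case analysis: steps along $x_{n_1}$, including $u_0^{\pm1}$, have identically vanishing quadratic correction, while every other generator leaves slack in $\ell$ that absorbs the $C\eta$ error. Your remark that intermediate vertices of a $d_\omega$-geodesic between points of $B_R^{d_\omega}(e)$ lie in $B_{3R}^{d_\omega}(e)$, so the one-step estimate must be checked on that larger ball, is a small but welcome precision over the paper's ``it is enough to show this for $g\in B_R(e)$''.
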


\begin{proof}
    It is enough to show that if $g\in B_R(e)$ and $s\in S_\omega$, then
    \begin{equation}
        \big|\Psi(gs)-\Psi(g)\big|\le \omega(s)\,.
    \end{equation}
    We can compute,  exploiting linearity
    \begin{align}
       \Psi(gs)-\Psi(g)&=  \ell (\pi_1(gs))- \ell (\pi_1(g))+
       \frac{\eta}{R^2}(\alpha (\pi_1(gs))-\alpha (\pi_1 (g)))\beta(gs)
       \\& \qquad +\frac{\eta}{R^2}\alpha( \pi_1(g))(\beta(gs)-\beta(g))\\
       &= \ell(\pi_1(s))+\frac{\eta}{R^2}\alpha( \pi_1(s))\beta(gs)+\frac{\eta}{R^2}\alpha(\pi_1(g))(\beta(gs)-\beta(g))\,.\label{eq:keypsi}
    \end{align}
We will use repeatedly that, for every $g\in B_R(e)$ and $s\in S_\omega$, it holds
    \begin{equation}
        |\alpha(\pi_1(s))| \le C\,,
        \quad
        |\alpha(\pi_1(g))|\le C R\, ,
        \quad |\beta(gs)|\le CR^2\,,
        \quad
        |\beta(gs)-\beta(g)|\le CR\,.
    \end{equation}
The first two inequalities follow from the linearity of $\alpha\circ \pi_1 : A_1\otimes_\ZZ \RR \to \RR$, \eqref{eq:E-S} and \eqref{eq:mild_comp}:
\begin{equation}
    |\alpha(\pi_1(g))| = |\beta([v_0, \pi_1(g)])|
    \le C|\pi_1(g)|_E
    \le C| g |_S
    \le C |g |_\omega\,.
\end{equation}
The third inequality follows from the linearity of $\beta(g)$ with respect to the second-layer Malcev coordinate $b(g)$ and Lemma \ref{lemma:iteration_growth}:
\begin{equation}
    |\beta(g)| \le C|b(g)| \le C |g|_S^2 \le C |g|_\omega^2\,.
\end{equation}
We now prove the fourth inequality.
Since $s\in S_\omega$, its Malcev coordinates are uniformly bounded.
Moreover, in a step-two group, the second-layer coordinate of a product differs from the sum of the second-layer coordinates only by the commutator contribution of the first-layer coordinates.
Therefore, using item 3 in Lemma \ref{lemma:iteration_growth}, we obtain
\begin{equation}
|\beta(gs)-\beta(g)|
\le C\big(1+|a(g)|\big)
\le C\big(1+|g|_\omega\big)\,.
\end{equation}

To estimate \eqref{eq:keypsi}, we distinguish the various cases. 
    \begin{enumerate}[label=\roman*)]
        \item $s= u_0^{\pm 1}$. Hence, $\pi_1(s)=\pm l_0v_0$ and $\ell(s)=\pm l_0$. Moreover, $\alpha(s)=0$. Finally, as $u_0$ is in the direction of the last element of the Malcev basis, it follows that $\beta(gs)-\beta(g)=0$. All in all,
        $|\Psi(gs)-\Psi(g)|=l_0=\omega(s)$.
        
        \item $s= u_i^{\pm 1}$, $i\ne 0$. By construction,  $|\ell(\pi_1(u_i^{\pm 1}))|\le( 1-\eps_0)l_i$, so we see that
        \begin{equation}
    \big|\Psi(gs)-\Psi(g)\big|\le (1-\eps_0)\omega(s)+\frac{\eta}{R^2}  C R^2+\frac{\eta}{R^2}(CR)^2\le \omega(s)\,,
        \end{equation}
        provided that $\eta\in (0,1)$ is small enough.

    \item $s= x_j^{\pm 1}$, $1\le j\le n_1-1$. By construction,  there exists $\eps>0$ such that 
    \begin{equation}
      |\ell(\pi_1(s))|\le (1-\epsilon)|\pi_1(s)|_E
    \le (1-\eps)|s|_S=(1-\epsilon)\omega(s)  
    \end{equation}
    so that the conclusion is as in $\rm {ii)}$.
    \item $s= x_{n_1}^{\pm 1}$. Then $|\ell(s)|=|\pi_1(s)|_E\le |s|_S=\omega(s)$, and the conclusion is as in $\rm {i)}$.
    
    \item $s= y_j^{\pm 1}$, for $1\le j\le n_2$. We notice that $\ell(s)=\alpha(s)=0$, so that 
    \begin{equation}
        |\Psi(gs)-\Psi(g)|
        \le \frac{\eta}{R^2}(CR)^2\le \omega(s)\,,
    \end{equation}
    provided that $\eta\in (0,1)$ is small enough.\qedhere
    \end{enumerate}
\end{proof}

\subsection{The main computation} \label{subsec:mainprop}
We work in the step-two framework of  Subsections \ref{subsec:Malcev}, \ref{subsec:auxiliary_distance} and \ref{subsec:Kantorovich}, with the same notation. 
Fix $\eta\in(0,1)$ small enough, given by Lemma \ref{LipLem}. Fix $H\in\NN$ positive to be chosen later. We define
\begin{equation}
    \widetilde{\Psi}_{n,H}\defeq
    \begin{cases}
        \Psi_{\eta, {2Hn}}\qquad&\text{on }B^{d_\omega}_{2Hn}(e)\,,\\
        0\qquad&\text{on }\Gamma_N\setminus B^{d_\omega}_{4Hn}(e)\,,
    \end{cases}
\end{equation}
extended to be $1$-Lipschitz with respect to $d_\omega$ on $\Gamma_N$. Notice in particular that 
\begin{equation}\label{eq:tildePsi_bound}
  |\widetilde \Psi_{n,H}(g)|\le 4Hn
  \qquad\text{for every } g\in \Gamma_N\,.
\end{equation}

\begin{prop}\label{KeyLemma}
Under the assumptions above, provided that $H\ge C$,
\begin{equation}
        \liminf_{n\rightarrow\infty}\frac{1}{l_0n}\sum_{g\in\Gamma_N}\big(\widetilde\Psi_{n,H}(u_0^n g)-\widetilde\Psi_{n,H}( g)\big)\nu^{(n^2)}(g)>1\,.
    \end{equation}
\end{prop}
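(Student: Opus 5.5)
We split the sum according to whether $g$ lies in a ball where the explicit formula for $\Psi$ is available. Set $R\defeq 2Hn$. For large $n$, $|u_0^n|_\omega=l_0 n$ by \eqref{distu0}. Hence if $g\in B^{d_\omega}_{Hn}(e)$, both $g$ and $u_0^n g$ lie in $B^{d_\omega}_{R}(e)$, as soon as $Hn\ge l_0n$, i.e.\ $H\ge l_0$. There $\widetilde\Psi_{n,H}=\Psi_{\eta,R}$, and we compute exactly. On the complement we only use that $\widetilde\Psi_{n,H}$ is $1$-Lipschitz for $d_\omega$, so $|\widetilde\Psi_{n,H}(u_0^ng)-\widetilde\Psi_{n,H}(g)|\le l_0 n$. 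The bad region $\Gamma_N\setminus B^{d_\omega}_{Hn}(e)$ is contained in $\Gamma_N\setminus B^{d_S}_{Hn/C}(e)$ by \eqref{eq:mild_comp}. By Lemma \ref{vfedsc} it has $\nu^{(n^2)}$-mass at most $Ce^{-H^2/C}$. So the bad contribution to the normalized sum is at most $Ce^{-H^2/C}$ in absolute value.

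On the good region we compute as in \eqref{bgrfdv}. Write $g$ in Malcev coordinates. Since $u_0=x_{n_1}^{a_0}$ for some integer $a_0$ (it is horizontal and aligned with $x_{n_1}$), we have $\pi_1(u_0^ng)=\pi_1(g)+nl_0v_0$. Hence the $\ell$-part gives exactly $nl_0$, and $\alpha(\pi_1(u_0^ng))=\alpha(\pi_1(g))$ because $\alpha(v_0)=\beta([v_0,v_0]_{\Gamma_N})=0$. For the second layer, moving $x_{n_1}^{na_0}$ past $x_1^{a_1}\cdots x_{n_1-1}^{a_{n_1-1}}$ produces central commutators whose $\pi_2$-image is $\pm[nl_0v_0,\pi_1(g)]_{\Gamma_N}$. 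Thus
\begin{equation}
\beta(u_0^ng)-\beta(g)=\pm n l_0\,\alpha(\pi_1(g))\,,
\end{equation}
exactly, by bilinearity of the commutator in step two. The sign is fixed by the ordering convention, and we choose the sign of $\beta$ in \eqref{eq:beta} so that it is $+$; this sign bookkeeping is the one point to check carefully. Therefore
\begin{equation}
\Psi(u_0^ng)-\Psi(g)=nl_0\Big(1+\frac{\eta}{R^2}\alpha(\pi_1(g))^2\Big)\qquad\text{for }g\in B^{d_\omega}_{Hn}(e)\,.
\end{equation}

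Summing, the good region contributes at least
\begin{equation}
nl_0\Big(\nu^{(n^2)}(B^{d_\omega}_{Hn})+\frac{\eta}{4H^2n^2}\sum_{B^{d_\omega}_{Hn}}\alpha^2(\pi_1(g))\nu^{(n^2)}(g)\Big)\,.
\end{equation}
By \eqref{revdscc0} applied with $n^2$, the full second moment is at least $n^2/C$. The tail is controlled by Cauchy--Schwarz: by \eqref{revdscc01} the fourth moment is at most $Cn^4$, and the complement has mass at most $Ce^{-H^2/C}$, so the tail second moment is at most $Cn^2e^{-H^2/C}$. After dividing by $l_0n$, the normalized sum is therefore at least
\begin{equation}
1-Ce^{-H^2/C}+\frac{\eta}{4H^2}\Big(\frac1C-Ce^{-H^2/C}\Big)-Ce^{-H^2/C}\,.
\end{equation}

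The main obstacle is the competition between the gain $\eta/(CH^2)$, which decays only polynomially in $H$, and the losses $e^{-H^2/C}$, which decay like a Gaussian. Since $\eta$ is fixed by Lemma \ref{LipLem} independently of $H$, for $H\ge C$ large the Gaussian losses are dominated by the polynomial gain. The liminf is then at least $1+\eta/(8CH^2)>1$.
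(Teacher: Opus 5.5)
Your overall strategy is the paper's. You split at $B^{d_\omega}_{Hn}(e)$, compute the increment exactly on the good region as $l_0n\bigl(1+\frac{\eta}{R^2}\alpha^2(\pi_1(g))\bigr)$, and let the polynomial gain $\eta/(CH^2)$ beat the Gaussian losses via \eqref{revdscc0}, \eqref{revdscc01} and Cauchy--Schwarz. However, your bad-region estimate fails. The $1$-Lipschitz property only gives
\begin{equation*}
|\widetilde\Psi_{n,H}(u_0^ng)-\widetilde\Psi_{n,H}(g)|\le d_\omega(g,u_0^ng)=|g^{-1}u_0^ng|_\omega\,.
\end{equation*}
Because $d_\omega$ is left-invariant, this is the length of a \emph{conjugate} of $u_0^n$, not $|u_0^n|_\omega=l_0n$. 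In step two, $g^{-1}u_0^ng$ differs from $u_0^n$ by a central element whose $\pi_2$-image is, up to sign, $nl_0[v_0,\pi_1(g)]_{\Gamma_N}$. So the displacement genuinely exceeds $l_0n$. Your own good-region identity shows this: take $g$ with $Hn<|g|_\omega\le 2Hn-l_0n$ and $\alpha(\pi_1(g))\neq0$. Then both $g$ and $u_0^ng$ lie in $B^{d_\omega}_{2Hn}(e)$, and the increment is strictly larger than $l_0n$. That contradicts your claimed bound. This non-commutativity is exactly what the proposition exploits, so the translation cannot be treated as moving every point by $l_0n$.

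The repair is easy. Use the uniform bound \eqref{eq:tildePsi_bound}, $|\widetilde\Psi_{n,H}|\le 4Hn$, as the paper does. Each bad term is then at most $8Hn$, and the normalized bad contribution is at most $8Hl_0^{-1}\nu^{(n^2)}\bigl(\Gamma_N\setminus B^{d_\omega}_{Hn}(e)\bigr)\le CHe^{-H^2/C}$. Alternatively, note that the increment vanishes unless $|g|_\omega\le 4Hn+l_0n$. There the commutator estimate and item (1) of Lemma \ref{lemma:iteration_growth} give displacement at most $l_0n+C\sqrt H\,n$. The extra factor $H$ (or $\sqrt H$) is still killed by $e^{-H^2/C}$, so your final comparison with $\eta/(CH^2)$ goes through unchanged.

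On the sign: flipping $\beta$ also flips $\alpha=\beta([v_0,\cdot\,]_{\Gamma_N})$, so that choice could never fix a wrong sign. Fortunately none is needed. Write $u_0^ng=gu_0^nc$ with $c=[u_0^n,g]$ central; both commutator conventions give the same $c$ in step two. By definition of the induced form, $\pi_2(c)=nl_0[v_0,\pi_1(g)]_{\Gamma_N}$. Also $\beta(gu_0^n)=\beta(g)$, since $u_0^n$ is a power of $x_{n_1}$. Hence $\beta(u_0^ng)-\beta(g)=+nl_0\,\alpha(\pi_1(g))$ automatically.
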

\begin{proof}
    We abbreviate $B_{R}$ as $B_R^{d_\omega}(e)$. We compute
    \begin{equation}\label{movfdesc}
    \begin{split}
                &\frac{1}{l_0n}\sum_{g\in\Gamma_N}\big(\widetilde\Psi_{n,H}(u_0^n g)-\widetilde\Psi_{n,H}( g)\big)\nu^{(n^2)}(g)\\
        &\qquad\qquad\ge  \frac{1}{l_0n} \sum_{g\in B_{Hn}}\big(\widetilde\Psi_{n,H}(u_0^n g)-\widetilde\Psi_{n,H}( g)\big)\nu^{(n^2)}(g)-{8H}l_0^{-1}\nu^{(n^2)}(\Gamma_N\setminus B_{Hn})\\
        &\qquad\qquad\ge \frac{1}{l_0n}\sum_{g\in B_{Hn}}\big(\Psi_{\eta, 2Hn}(u_0^n g)-\Psi_{\eta, 2Hn}( g)\big)\nu^{(n^2)}(g)-{CH}e^{-H^2/C}\,,
            \end{split}
        \end{equation}
        where we used \eqref{eq:tildePsi_bound}, Lemma \ref{vfedsc} and assumed $H\ge l_0$.
        Since $\pi_1(u_0^n)$ is aligned with $v_0$, we have
        \begin{equation}
            \alpha(u_0^ng)=\beta([v_0, \pi_1(u_0^ng)]_{\Gamma_N}) = \alpha(g)\,,
        \end{equation}
        hence, recalling Definition \ref{def:Psi} and properties \eqref{eq:prop_ell}, we obtain
         \begin{align}
            \Psi_{\eta, 2Hn}(u_0^n g)-\Psi_{\eta, 2Hn}( g)
            &=\ell(u_0^n)
            +\frac{\eta}{(2Hn)^2}\alpha(g)(\beta(u_0^ng)-\beta(g))
            \\&= l_0 n + \frac{\eta}{(2Hn)^2}\alpha(g)(\beta(u_0^ng)-\beta(g))\,.
        \end{align}
        Now, notice that 
        \begin{equation}
           u_0^n g=[u_0^n,g]gu_0^n=gu_0^n[u_0^n,g]\,.
        \end{equation}
        Since $u_0$ is a power of $x_{n_1}$, we deduce
        \begin{equation}
            \beta(u_0^n g)=\beta(gu_0^n)+\beta([u_0^n,g])=\beta(g)+\beta([u_0^n,g])
            = \beta(g) + n {l_0}\beta([v_0,\pi_1(g)]_{\Gamma_N})
        \end{equation}
        leading to
        \begin{align}
            \frac{1}{l_0n}\Big(\Psi_{\eta,2Hn}(u_0^n g)-\Psi_{\eta, 2Hn}( g)\Big)
            &= 1+\frac{1}{l_0n}\frac{\eta}{(2Hn)^2}\alpha(g)\, n {l_0}\beta([v_0,\pi_1(g)]_{\Gamma_N})\\
            & 
           = 1+\frac{\eta}{(2Hn)^2}\alpha^2(g)\,.
        \end{align}
        Hence,  using  H\"older's inequality,
        \begin{align}
           & \frac{1}{l_0n}\sum_{g\in B_{Hn}}\big(\Psi_{\eta, 2Hn}(u_0^n g)-\Psi_{\eta, 2Hn}( g)\big)\nu^{(n^2)}(g)= \sum_{g\in B_{Hn}}\Big(1+\frac{\eta}{(2Hn)^2}\alpha^2(g)\Big)\nu^{(n^2)}(g)\\
           &\qquad\qquad =\sum_{g\in \Gamma_N}\Big(1+\frac{\eta}{(2Hn)^2}\alpha^2(g)\Big)\nu^{(n^2)}(g)-\sum_{g\in \Gamma_N\setminus B_{Hn}}\Big(1+\frac{\eta}{(2Hn)^2}\alpha^2(g)\Big)\nu^{(n^2)}(g)\\
           &\qquad\qquad \ge  \Big(1+\frac{\eta}{(2Hn)^2}\sum_{g\in\Gamma_N}\alpha^2(g)\nu^{(n^2)}(g)\Big)-\nu^{(n^2)}(\Gamma_N\setminus B_{Hn})\\
           &\qquad\qquad\qquad\qquad-\frac{\eta}{(2Hn)^2}\nu^{(n^2)}(\Gamma_N\setminus B_{Hn})^{1/2} \Big(\sum_{g\in \Gamma_N}\alpha^4(g)\nu^{(n^2)}(g)\Big)^{1/2}\,.
        \end{align}
Using Lemma \ref{vfedsc} and Lemma \ref{scacs}  
         twice, if $n$ is large enough (which we assume from now on), we continue the above  as
        \begin{align}
            &\frac{1}{l_0n}\sum_{g\in B_{Hn}}\big(\Psi_{\eta,2Hn}(u_0^n g)-\Psi_{\eta,2Hn}( g)\big)\nu^{(n^2)}(g)\\
            &\qquad\qquad\ge \Big(1+\frac{\eta}{(2Hn)^2}n^2 /C\Big)- Ce^{-H^2/C}-\frac{\eta}{(2Hn)^2}Ce^{-H^2/2C}Cn^2\,.
        \end{align}
        We can use this inequality to continue \eqref{movfdesc} as
        \begin{align}
             &\frac{1}{l_0n}\sum_{g\in\Gamma_N}\big(\widetilde\Psi_{n,H}(u_0^n g)-\widetilde\Psi_{n,H}( g)\big)\nu^{(n^2)}(g)\\
             &\qquad\qquad\ge \Big(1+\frac{\eta}{C(2H)^2} \Big)-Ce^{-H^2/C}-\frac{\eta}{(2H)^2}Ce^{-H^2/2C}-CHe^{-H^2/C}>1  \,,      \end{align}
       provided that $H$ is large enough.
\end{proof}

\subsection{Proof of Theorem \ref{vecdscs:res}}\label{set:prooffive}

Recall that, by Section \ref{sec:reduction_step_2}, it is enough to treat the case of step-two groups, i.e., $\Gamma_{N,3}=\{e\}$.
We consider the framework and notation of Subsections \ref{subsec:Malcev}, \ref{subsec:auxiliary_distance},  \ref{subsec:Kantorovich} and \ref{subsec:mainprop}.
Thus $\Gamma$ is finitely generated, virtually nilpotent, and not virtually abelian, with $\Gamma_N$ of step two.
Moreover, $S$ is a symmetric set of generators, and $E$ is the associated polytope with special vertex $v_0\in A_1\otimes_\ZZ\RR$.
We fix a Malcev basis with $x_{n_1}$ aligned with $v_0$, and define the elements $u_i$ from the vertices of $E$ as in Subsection \ref{subsec:auxiliary_distance}, see \eqref{eq:t_i u_i}.
Finally, $\mu$ is a symmetric probability measure on $\Gamma$ with finite support containing a set of generators  and $e$.
Let $\nu^{(n)}$ be as in \eqref{eq:nu_n}. We are going to repeatedly  use \eqref{distu0} for $i=0$, i.e., $|u_0^n|_\omega=l_0|n|$ for every $n\in\NN$.

\medskip

We first show that, under the assumptions above, we have the claim of Theorem \ref{vecdscs:res} with $|\,\cdot\,|_S$ replaced by $|\,\cdot\,|_\omega$ and $\mu^{\ast n^2}$ replaced by $\nu^{(n^2)}$. Namely, we show that
\begin{equation}\label{vefdscc}
\liminf_{n\rightarrow\infty}\frac{W_1^{d_\omega}(\nu^{(n^2)}_{u_0^n},\nu_e^{(n^2)})}{|u_0^n|_\omega}>1\,.
\end{equation}
To this aim, we recall  the easy part of duality in optimal transport. Let  $\pi$ be an optimal plan for $\nu^{(n^2)}_{u_0^n},\nu^{(n^2)}_e$ and take $f:\Gamma_N\rightarrow\RR$, a $1$-Lipschitz function with respect to $d_\omega$. Then
    \begin{align}
        W_1^{d_\omega}(\nu^{(n^2)}_{u_0^n},\nu^{(n^2)}_e)&=\int_{\Gamma_N\times\Gamma_N} d_\omega(g,h) d\pi(g,h)\\
        &\ge \int_{\Gamma_N\times\Gamma_N} (f(g)-f(h))d \pi(g,h)\\
        &= \sum_{g\in\Gamma_N} f(g)\nu^{(n^2)}_{u_0^n}(g)-\sum_{h\in\Gamma_N} f(h)\nu^{(n^2)}_e(h)\\
        &= \sum_{g\in\Gamma_N} f(u_0^ng)\nu^{(n^2)}(g)-\sum_{g\in\Gamma_N} f(g)\nu^{(n^2)}(g)\,.
    \end{align}
    
    Hence, since  $\widetilde{\Psi}_{n,H}$ is $1$-Lipschitz with respect to $d_\omega$,  Proposition \ref{KeyLemma} implies that
    \begin{equation}
       \liminf_{n\rightarrow\infty}\frac{ W_1^{d_\omega}(\nu^{(n^2)}_{u_0^n},\nu^{(n^2)}_e)}{l_0n}\ge\liminf_{n\rightarrow\infty}\frac{1}{l_0n}\sum_{g\in\Gamma_N}\big(\widetilde\Psi_{n,H}(u_0^n g)-\widetilde\Psi_{n,H}( g)\big)\nu^{(n^2)}(g)>1\,,
    \end{equation}
    provided that $H$ is large enough. We fix such $H$ and  \eqref{vefdscc} follows.
\medskip

Fix $\eps\in(0,1)$. By Proposition \ref{vdfvsd}, for every $g\in\Gamma_N$, we have
         \begin{equation}\label{ineq1}
             |g|_S\ge |g|_\omega-C|g|_S^{3/4}\ge |g|_\omega-C|g|_\omega^{3/4}\ge (1-\eps)|g|_\omega-C\eps^{-3}
         \end{equation}
and similarly
         \begin{equation}\label{ineq2}
             |g|_\omega \ge (1-\eps)|g|_S-C\eps^{-3}\,.
         \end{equation}
 By \eqref{ineq1} and \eqref{ineq2}, we have 
         \begin{equation}
             \frac{W_1^{d_S}(\nu^{(n^2)}_{u_0^n},\nu^{(n^2)}_e)}{|u_0^n|_S}\ge \frac{(1-\epsilon)W_1^{d_\omega}(\nu^{(n^2)}_{u_0^n},\nu^{(n^2)}_e)-C\epsilon^{-3}}{(1-\epsilon)^{-1}(|u_0^n|_\omega+C\epsilon^{-3})}
         \end{equation}
         so that \eqref{vefdscc} implies that
         \begin{equation}\label{vbrfdssdc}
        \liminf_{n\rightarrow\infty}\frac{W_1^{d_S}(\nu^{(n^2)}_{u_0^n},\nu^{(n^2)}_e)}{|u_0^n|_S}>1\,,
         \end{equation}
provided that $\eps\in(0,1)$ is small enough. 
By \eqref{brfdv}, 
         \begin{equation}\label{ineq0}
             W_1^{d_S}(\mu^{\ast n^2}_{u_0^n},\mu^{\ast n^2}_e)\ge W_1^{d_S}(\nu^{(n^2)}_{u_0^n},\nu^{(n^2)}_e)-2C_\sigma\qquad\text{for every $n\in\NN$}\,,
         \end{equation}
so that we have
         \begin{equation}
        \liminf_{n\rightarrow\infty}\frac{W_1^{d_S}(\mu^{\ast n^2}_{u_0^n},\mu^{\ast n^2}_e)}{|u_0^n|_S}>1\,.
         \end{equation}
         
   Now, using   \eqref{ineq1} and \eqref{ineq2}, we see that
 \begin{equation}
    \lim_{n\rightarrow\infty} \frac{|u_0^n|_S}{n}=l_0\in(0,\infty)\,,
 \end{equation}
 which concludes the proof.
\qed

\begin{remark}
 A quick inspection of the proof  above shows that  \eqref{vbrfdssdc}
holds for any sequence of probability measures $(\nu^{(n)})_n$ satisfying the conclusions of  Lemma \ref{vfedsc} and Lemma \ref{scacs},
with the convention that     $\nu^{(n)}_g=(L_g)_*\nu^{(n)}$.
\end{remark}

\section{The virtually abelian case}

\subsection{Proof of Theorem \ref{cdscs:rest}}
 Notice first that 
 \begin{equation}
     W_1^{d_S}(\mu^{\ast n}_e,\mu^{\ast (n+1)}_e)\le C\,,
 \end{equation}
 which is justified by the plan $(h,k)\mapsto \mu^{\ast n}_e(h)\mu_e(h^{-1}k)$. Hence, up to replacing $\mu$ with $\mu^{\ast p}$, we see that we can assume that  $\pi_{\GGN}(\supp(\mu))=\GGN$. In particular, there exists $\alpha\in (0,1)$ such that

 \begin{equation}\label{sdacdcds}
     \sum_{h\in\Gamma_N}\mu(h\sigma(b))>\alpha\qquad\text{for every $b\in\GGN$}\,,
 \end{equation}
 which gives a uniform bound from below for transition probabilities on $\GGN$.
     
Write first the estimate
\begin{equation}\label{vfd1}
            \big|W_1^{d_S}(\mu^{\ast n}_e,\mu^{\ast n}_g)- W_1^{d_S}(\mu^{\ast n}_e,\mu^{\ast n}_{g_N})\big|\le W_1^{d_S}(\mu^{\ast n}_{g_N},\mu^{\ast n}_g)\,.
    \end{equation}
    Notice that by \eqref{brfdv}, for every $h\in\Gamma_N$ and $n\in\NN$,
    \begin{equation}\label{vfd2}
        \big|W_1^{d_S}(\mu^{\ast n}_e,\mu^{\ast n}_h)-W_1^{d_S}(\nu^{(n)}_e,\nu^{ (n)}_h)\big|\le 2C_\sigma\,,
    \end{equation}
    Moreover, for $h\in\Gamma_N$,
    \begin{equation}
        W_1^{d_S}(\nu^{(n)}_e,\nu^{ (n)}_h)\le|h|_S\,,
    \end{equation}
    which is verified by the translation plan, since $\Gamma_N$ is abelian. Also, Lemma \ref{lift} (as $\Gamma_N$ is abelian), implies that 
    \begin{equation}
       |h|_E\le |h|_S\le |h|_E+C\qquad\text{for every }h\in \Gamma_N\,.
    \end{equation}
    Clearly, $|\,\cdot\,|_E$ induces a distance on $\Gamma_N$, $d_E$, and, for $h\in\Gamma_N$,
    \begin{equation}
                W_1^{d_E}(\nu^{(n)}_e,\nu^{ (n)}_h)\ge|h|_E\,,
    \end{equation}
    which is verified by duality, through a linear functional $\ell\in (A\otimes\RR)^*$ with $\|\ell\|_{(A\otimes\RR,|\,\cdot\,|_E)^*}=1$ and $\ell(h)=|h|_E$. The computations are  similar as we did for the nilpotent case and we omit the details as in this case the computation is much simpler.

    The three inequalities above imply that
    \begin{equation}\label{vfd3}
        \big|W_1^{d_S}(\nu^{(n)}_e,\nu^{ (n)}_h)-|h|_S\big|\le C\qquad\text{for every $n\in\NN$ and $h\in\Gamma_N$}\,.
    \end{equation}
    Hence, from \eqref{vfd1}, \eqref{vfd2} and \eqref{vfd3}, we have that, 
    \begin{equation}
        \big|W_1^{d_S}(\mu^{\ast n}_e,\mu^{\ast n}_g)-|g_N|_S\big|\le W_1^{d_S}(\mu^{\ast n}_{g_N},\mu^{\ast n}_g)+C\,.
    \end{equation}

    By the above and \eqref{csdcdcsc}, it is enough to prove that for every $b\in\GGN$, 
    \begin{equation}
        W_1^{d_S}(\mu^{\ast n}_e,\mu^{\ast n}_{\sigma(b)})\le C\qquad\text{for every $n\in\NN$}\,.
    \end{equation}
    As $\GGN$ is finite, we reduce to show that for every $s_0\in\Gamma$ fixed,
 \begin{equation}
        W_1^{d_S}(\mu^{\ast n}_e,\mu^{\ast n}_{s_0})\le C\qquad\text{for every $n\in\NN$}\,.
    \end{equation}

    It turns out that to prove the claim, it is convenient to use the probabilistic interpretation of optimal transport.  
      Let now $(\xi_i)_{i\ge 1},(\eta_i)_{i\ge 1}$ be i.i.d.\ random variables with distribution $\mu$, i.e.,
    \begin{equation}
        \mathbb P(\xi_i=s)=\mathbb P(\eta_i=s)=\mu(s)\qquad\text{for every $s\in \Gamma$ and $i\ge 1$}\,.
    \end{equation}
    Let also 
    \begin{equation}
        \tau\defeq\min\{i\in\NN: \pi(\xi_1\cdots \xi_i)=\pi(s_0\eta_1\cdots \eta_i)\}\,.
    \end{equation}
    Notice that by \eqref{sdacdcds} $\tau$ is finite a.e., more precisely 
    \begin{equation}\label{cdsc}
    \mathbb{E}        (\tau)\le
    \frac{1}{\alpha}\,,
    \end{equation}
    as, for every $i$, the probability that $\eta_i$ is so that $\pi(s_0\eta_1\cdots \eta_{i-1}\eta_i)=\pi(\xi_1\cdots\xi_i)$ is bounded from below by $\alpha$ (thanks to \eqref{sdacdcds}), so that the  stopping time $\tau$ is bounded from above by a geometric random variable of parameter $\alpha$.

    We are going to explicitly construct a coupling.      
    We define, for every $n\in\NN$,
    \begin{equation}
        X^n_e\defeq \xi_1\cdots \xi_n
    \end{equation} 
    and
    \begin{equation}
         Y^n_{s_0}\defeq\begin{cases}
             s_0\eta_1\cdots \eta_{\tau}\xi_{\tau+1}\cdots \xi_{n}\qquad&\text{if $n> \tau$}\,,\\
             s_0\eta_1\cdots \eta_{n}\qquad&\text{if $n\le \tau$}\,.
         \end{cases} 
    \end{equation}
    Notice that $X^n_{e}$ has law  $\mu^{\ast n}_e$ and $Y^n_{s_0}$ has  law $\mu^{\ast n}_{s_0}$, since the increments are i.i.d.\ with law $\mu$. Trivially, for $n\le \tau$
    \begin{equation}
        d_S(X^n_e,Y^n_{s_0})\le C+Cn\le C+C\tau\,.
    \end{equation}
    Now we deal with the case $n>\tau$.
    We then notice that by definition of $\tau$,
    \begin{equation}
        (\xi_1\cdots \xi_\tau)^{-1} s_0\eta_1\cdots \eta_\tau\in\Gamma_N\,,
    \end{equation}
    so that, as $\Gamma_N$ is abelian,  for $n>\tau$,
    \begin{align}
        d_S(X^n_e,  Y^n_{s_0})&=|(\xi_1\cdots\xi_{n})^{-1} s_0 \eta_1\cdots \eta_{\tau}\xi_{\tau+1}\cdots\xi_n|_S\\
        &=|(\xi_{\tau +1}\cdots\xi_{n})^{-1} \underbrace{(\xi_1\cdots \xi_\tau)^{-1} s_0 \eta_1\cdots \eta_{\tau}}_{\in\Gamma_N}\underbrace{\xi_{\tau+1}\cdots\xi_n\sigma(b_n)^{-1}}_{\in\Gamma_N}\sigma(b_n)|_S\\
        &=|(\xi_{\tau +1}\cdots\xi_{n})^{-1}\xi_{\tau+1}\cdots\xi_n\sigma(b_n)^{-1} (\xi_1\cdots \xi_\tau)^{-1} s_0 \eta_1\cdots \eta_{\tau}\sigma(b_n)|_S\\
        &=|\sigma(b_n)^{-1}(\xi_1\cdots \xi_\tau)^{-1}s_0 \eta_1\cdots \eta_{\tau}\sigma(b_n)|_S\le C+C\tau\,,
    \end{align}
    where we denoted  $b_n\defeq\pi(\xi_{\tau+1}\cdots \xi_n)$.

    We then use  these estimates in 
    \begin{align}
        W_1^{d_S}(\mu^{\ast n}_e,\mu^{\ast n}_{s_0} )\le \mathbb E (d_S(X_e^n, Y^n_{s_0}))\le C+C\mathbb E\tau \le C\,,
    \end{align}
    where we used also \eqref{cdsc}.\qed

\subsection{Proof of Remark \ref{rempianofurbo}}
\label{subsec:proof_remark}

It is convenient to write the infinite dihedral group $\Gamma$ from Example \ref{infdie} as a semi-direct product $\ZZ\rtimes C_2$, where $C_2\defeq \{0,1\}$ is cyclic. The only non-trivial semi-direct product is 
\begin{equation}
    (a_1,\epsilon_1)(a_2,\epsilon_2)\defeq (a_1+(-1)^{\epsilon_1}a_2,\epsilon_1+\epsilon_2)\,.
\end{equation}
We consider the finite symmetric set of generators 
\begin{equation}
    S\defeq \{(1,0),(-1,0),(0,1)\}\,
\end{equation}
and the measure $\mu$ given by the $1/2$-lazy symmetric random walk as in \eqref{asavefdscz}.
We set $g_n\defeq(n,1)$ (notice that $|g_n|_S=n+1$) and we want to estimate
\begin{equation}
    \sum_{g\in\Gamma} d_S(g,g_ng)\mu^{\ast n^2}(g)=\sum_{g\in\Gamma}  |g^{-1}g_ng|_S\mu^{\ast n^2}(g)\,,
\end{equation}
which is the value given by the trivial translation plan. 
Notice that 
\begin{equation}
    |(a,\epsilon)^{-1}(n,1)(a,\epsilon)|_S=1+|n-2a|\,,
\end{equation}
so that
\begin{equation}
    \begin{split}
        \sum_{g\in\Gamma}  |g^{-1}g_ng|_S\mu^{\ast n^2}(g)=&1+ \sum_{g\in\Gamma} |n-2a(g)| \mu^{\ast n^2}(g)= 1+\mathbb{E}(|n-2a(X^{n^2})|)\\
        &=1+\mathbb{E}(|n-2M_{n^2}|)\,,
         \end{split}
\end{equation}
where $a(g)$ is the first component of $g=(a(g),\eps(g))\in \Gamma$,
 $X^m$  is the $m$-step $1/2$-lazy symmetric random walk originating from $e$ with respect to $S$ (see \eqref{aaavefdsc}), and  $M_m\defeq a(X^m)$.  We have that $M_m$ is the $2/3$-lazy symmetric random walk on $\ZZ$ starting from $0$, namely,
\begin{equation}
    M_{m+1}=\begin{cases}
        M_m\qquad&\text{with probability $2/3$}\,,\\
        M_m+1\qquad&\text{with probability $1/6$}\,,\\
        M_m-1\qquad&\text{with probability $1/6$}\,.
    \end{cases}
\end{equation}
In particular, $M_m$ has zero mean.

By the central limit theorem, we have convergence to the Gaussian random variable:
\begin{equation}
    \frac{M_{n^2}}{n}\rightarrow Z\sim N\Big(0,\frac{1}{3}\Big)\,.
\end{equation}
Hence, for some $\delta>0$,
\begin{equation}
   \frac{\mathbb{E}(|n-2M_{n^2}|)}{n}= \mathbb{E}(|1-\sfrac{2}{n}M_{n^2}|)\rightarrow \mathbb{E}(|1-2Z|)=1+2\delta\qquad\text{as $n\rightarrow\infty$}\,.
\end{equation}
Notice that even though $t\mapsto|1-2t|$ is unbounded, the convergence of expected values is rigorously justified  thanks to the fact that $\mathbb{E}((\sfrac{1}{n}M_{n^2})^2)=1/3$.

All in all, for $n$ large enough,
\begin{equation}
    \sum_{g\in\Gamma}  |g^{-1}g_ng|_S\mu^{\ast n^2}(g)\ge 1+(1+\delta) n\,,
\end{equation}
which concludes the proof.\qed

\subsection{Proof of Theorem \ref{thm:virtabel1:rest}}
    Of course, it is enough to treat the case where  $\Gamma_N$ is infinite, otherwise $\Gamma$ would be finite and the claim trivial.
    
    Recall that we denote with $\pi:\Gamma\rightarrow\GGN$ the projection. First, we build  subsets $(U_b)_{b\in\GGN\setminus\{e\}}$ such that, for every $b\in\GGN$, $b\ne e$,
    \begin{itemize}
    \item $U_b\subseteq \pi^{-1}(b)$,
        \item $|U_b|=2$,
        \item $U_b^{-1}\defeq \{g^{-1}:g\in U_b\}=U_{b^{-1}}$.
    \end{itemize}
    Moreover, for every $b_1,b_2\in\GGN\setminus\{e\}$, $b_1\ne b_2$, we fix a bijection $\theta_{b_1,b_2}:U_{b_1}\rightarrow U_{b_2}$.
     
    To show that this is possible, notice first that for $b\in\GGN$, $\pi^{-1}(b)\cong\Gamma_N$ is infinite.  If $b\ne b^{-1}$, define $U_b$ by choosing any two elements of $\pi^{-1}(b)$, and define $U_{b^{-1}}\defeq U_b^{-1}$. If instead $b=b^{-1}$, take $g_1,g_2\in\pi^{-1}(b)$, with $g_1\ne g_2$. If $g_1\ne g_1^{-1}$, set $U_b=\{g_1, g_1^{-1}\}$; if $g_1= g_1^{-1}$ but $g_2\ne  g_2^{-1}$, set $U_b=\{g_2, g_2^{-1}\}$; if instead $g_1= g_1^{-1}$ and $g_2= g_2^{-1}$, set $U_b=\{g_1, g_2\}$.

    Let now $e\in B\subseteq\Gamma_N$ be any finite, symmetric, conjugation invariant subset containing a set of generators. This is possible as conjugacy classes are finite, since $\Gamma$ is virtually abelian.
   Define, for $D\in\NN$ to be chosen later, the finite and symmetric set
    \begin{equation}
        S\defeq \Big( B^D \cup\bigcup_{b\in\GGN\setminus \{e\}} U_b\Big)\setminus\{e\}\,.
    \end{equation}
    Notice that $B^D$, the $D$-fold product $B\cdot \ldots \cdot B$, is still finite and conjugation invariant. Moreover, $S$ is a finite symmetric set of generators.
    
        We want to show that \eqref{efrvcrdscf} holds, provided  that we fix $D$ large enough. We know that it is enough to verify \eqref{efrvcrdscf} when $n=1$ and $g=s_0\in S$. As $\supp(\mu_e)= S\cup\{e\}$, it  is enough to  construct a map 
        $\Phi:S\cup\{e\}\rightarrow\Gamma$ that satisfies 
        \begin{equation}\label{vecdscc}
           \Phi_{*}\mu_e= \mu_{s_0}\qquad\text{and}\qquad d_S(t,\Phi(t))\le 1\quad\text{for every }t\in S\cup\{e\}\,.
        \end{equation} 
        \noindent\textit{Case $s_0\in B^D$.}  We  define 
        \begin{equation}
            \Phi(t)\defeq s_0 t\qquad\text{for $t\in S\cup\{e\}$}
        \end{equation}  and we notice that
        \begin{equation}
d_S(t,\Phi(t))=d_S(t,s_0t)=|t^{-1}s_0 t|_S= 1\qquad\text{for every }t\in S\cup\{e\}\,,
        \end{equation}
        as $B^D$ is conjugation invariant.\\
        \textit{Case $s_0\in U_b$}. We write $U_b=\{s_0,\bar s_0\}$ and we define
            \begin{alignat}{3}
                \Phi(e)&\defeq s_0\\
                \Phi(t)&\defeq ts_0\qquad&&\text{for $t\in B^D\setminus\{e\}$}\\
                \Phi(s_0)&\defeq s_0 \bar s_0^{-1}\qquad &&\\
                \Phi(\bar s_0)&\defeq e\\
                \Phi(t)&\defeq  s_0\theta_{c,b^{-1}c}(t) \qquad&&\text{for $t\in U_{c}$, $c\notin\{e,b\}$}\,.
            \end{alignat}
            Notice first that for $t\in B^D\setminus\{e\}$, $ts_0=s_0 s_0^{-1}ts_0\in s_0 (B^D\setminus\{e\})$, by conjugation invariance.            Now, $\Phi^{-1}(s_0)=\{e\}$ and $\Phi(S)=s_0S$ bijectively, so that the first condition of \eqref{vecdscc}   follows.     
            For the second condition of \eqref{vecdscc}, notice that $s_0,\bar s_0^{-1}\in S$ and  that, for $t\in U_c$,  $c\notin \{e,b\}$,
            \begin{equation}
                \pi(t^{-1} s_0\theta_{c,b^{-1}c}(t))=e\,,
            \end{equation}
            so that $t^{-1} s_0\theta_{c,b^{-1}c}(t)\in B^D$, for $D$ large enough (notice that in the above no element of $B^D$ appears).
\qed

\section{Transitive graphs}
\subsection{The case of finite vertex stabilizers}

First, we recall that if $G=(V,E)$ is a graph and $\Gamma\le \Aut(G)$, the vertex stabilizer of $\Gamma$ at $x\in V$ is defined as
\begin{equation}
    \Gamma_x\defeq \{\gamma\in\Gamma: \gamma(x)=x\}\,.
\end{equation}
Notice that if  $\Gamma$ acts transitively on $V$, then all the vertex stabilizers are conjugate by an element of $\Gamma$. We first prove our main result in this setting, Theorem \ref{notationsimplify}, under the additional assumption of finite vertex stabilizers. We show in the next subsection how this extra assumption can be removed.
\begin{prop}\label{notationsimplify:res}
    Let $G=(V,E),\Gamma, P$ be as in the statement of Theorem \ref{notationsimplify}. Assume, in addition, that $\Gamma$ has finite vertex stabilizers. Then  $G$ is quasi-isometric to $\ZZ^k$, for some $k\in\NN$.
\end{prop}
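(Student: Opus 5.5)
The plan is to transfer the problem to the group $\Gamma$ itself, viewed through the orbit map, and then invoke Theorem \ref{vecdscs:res} via Remark \ref{rem:vfdsc}.

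\textbf{Step 1: $\Gamma$ is a group of polynomial growth quasi-isometric to $G$.} Fix a base vertex $o\in V$ and let $K\defeq\Gamma_o$, which is finite by assumption. Set
\begin{equation}
S\defeq\{\gamma\in\Gamma: d(o,\gamma o)\le 1\}\,.
\end{equation}
Since $K$ is finite and $G$ is locally finite, $S$ is finite and symmetric. It generates $\Gamma$ because $G$ is connected and the action is transitive: a path from $o$ to $\gamma o$ lifts to a product of elements of $S$. The orbit map $\gamma\mapsto\gamma o$ is surjective with fibres the cosets $\gamma K$. By Švarc--Milnor, or by a direct check, it is a quasi-isometry $(\Gamma,d_S)\to(V,d)$. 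Hence $\Gamma$ has polynomial growth and is virtually nilpotent by Gromov's theorem. It therefore suffices to show that $\Gamma$ is virtually abelian, since then $\Gamma$, and hence $G$, is quasi-isometric to $\ZZ^k$.

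\textbf{Step 2: lift the kernel.} Define a probability measure on $\Gamma$ by
\begin{equation}
\mu(\gamma)\defeq\frac{P(o,\gamma o)}{|K|}\,.
\end{equation}
This is well defined as a probability measure because $\sum_\gamma P(o,\gamma o)/|K|=\sum_y P(o,y)=1$. The support of $\mu$ is finite and contains $e$ by laziness. Symmetry follows from diagonal invariance and the symmetry of $P$:
\begin{equation}
P(o,\gamma^{-1}o)=P(\gamma o,o)=P(o,\gamma o)\,.
\end{equation}
Irreducibility gives that $\supp\mu$ generates $\Gamma$, after possibly including $K$; one can convolve with the uniform measure on $K$, or note that $P^{\ast n}$ eventually charges all neighbours. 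By induction, the pushforward of $\mu^{\ast n}_\gamma$ under the orbit map is $P^n(\gamma o,\cdot)$. This uses that $\mu$ is right $K$-invariant, and that the left translate of $\mu^{\ast n}$ projects correctly by invariance of $P$.

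\textbf{Step 3: curvature transfer.} By the argument of Remark \ref{rem:vfdsc} applied on $G$, the non-negative curvature of $P$ on edges propagates to
\begin{equation}
W_1^d\big(P^n(x,\cdot),P^n(y,\cdot)\big)\le d(x,y)
\end{equation}
for all $x,y\in V$ and all $n$. Indeed, an edge bound gives a bound for all pairs by the triangle inequality along geodesics, and the bound is then iterated in $n$ by gluing plans. We now want to compare this with $W_1^{d_S}(\mu^{\ast n}_\gamma,\mu^{\ast n}_e)$ on $\Gamma$. The orbit map $p$ satisfies
\begin{equation}
d_S(\gamma,\gamma')\le A\, d(\gamma o,\gamma' o)+B\,.
\end{equation}
However, a multiplicative constant $A>1$ would ruin the comparison, so I need a sharper statement.

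\textbf{The main obstacle} is to get additive-only error. To achieve it, I would redefine the distance on $\Gamma$ as the pseudo-orbit distance $\rho(\gamma,\gamma')\defeq d(\gamma o,\gamma' o)$. This is left invariant, and $d_S$-distances differ from it only through the finite fibre $K$; concretely,
\begin{equation}
d_S(\gamma,\gamma')\le \rho(\gamma,\gamma')+C\,,
\end{equation}
since each edge of a geodesic in $G$ lifts to an element of $S$ up to a bounded correction at the end. Conversely, $\rho\le d_S$. An optimal plan on $V$ lifts to $\Gamma$ by disintegrating uniformly along the $K$-cosets, so we obtain
\begin{equation}
W_1^{d_S}(\mu^{\ast n}_\gamma,\mu^{\ast n}_e)\le W_1^d\big(P^n(\gamma o,\cdot),P^n(o,\cdot)\big)+C\le |\gamma|_S+C\,.
\end{equation}
This contradicts the conclusion of Theorem \ref{vecdscs:res}: there the ratio $W_1/|g_n|_S$ stays bounded away from $1$ from above along a sequence with $|g_n|_S\to\infty$, whereas here it is at most $1+C/|\gamma|_S\to 1$. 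That contradiction arises whenever $\Gamma$ is not virtually abelian, so $\Gamma$ is virtually abelian.

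The delicate points are the additive comparison $d_S\le\rho+C$ and the lifting of plans. If $d_S$ fails to satisfy the additive comparison, I would instead choose $S\defeq\{\gamma:\gamma o\sim o\text{ or }\gamma o=o\}$, which contains $K$, so that $d_S(\gamma,\gamma')=\rho(\gamma,\gamma')$ exactly whenever $\rho>0$ and $d_S\le1$ on a fibre.
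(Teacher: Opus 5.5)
Your proposal is correct and is essentially the paper's argument. You realize $\Gamma$ as a Cayley graph quasi-isometric to $G$ via the orbit map, where the paper uses Sabidussi's construction $\ell G=V\times\Gamma_o$. You then lift the kernel as $P(o,\gamma o)/|\Gamma_o|$, propagate non-negative curvature to all pairs and all times via Remark \ref{rem:vfdsc}, and obtain $W_1^{d_S}(\mu^{\ast n}_\gamma,\mu^{\ast n}_e)\le|\gamma|_S+C$, which contradicts Theorem \ref{vecdscs:res} unless $\Gamma$ is virtually abelian.

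The differences are cosmetic: you put $\Gamma_o$ into $S$ and lift plans uniformly along cosets, getting additive error $1$ instead of the paper's $4$. Your hedges are also unnecessary, since laziness already gives $\Gamma_o\subseteq\supp\mu$ and your fallback generating set coincides with your original one.
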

\begin{proof}
	We start by recalling  the construction of \cite[Theorem 4]{Sabidussi}. First, fix $o\in V$ and let $\Gamma_o$ be the vertex stabilizer at $o$, with $\ell\defeq |\Gamma_o|\in\mathbb{N}$. Then, we set $\ell V\defeq V\times \Gamma_o\simeq V\times\{1,\dots,\ell\}$.   We fix a section $\sigma:V\rightarrow\Gamma$ of the orbit map $\Gamma\ni\gamma\mapsto \gamma(o)\in V$ and we consider
    \begin{equation}\notag
        \ell V\ni (x,i)\mapsto \sigma(x)i\in\Gamma\,,
    \end{equation} 
    whose inverse is clearly
    \begin{equation}\notag
        \Gamma\ni \gamma\mapsto (\gamma (o),  \sigma(\gamma(o))^{-1}\gamma)\in \ell V\,.
    \end{equation}
    We then consider the graph $\ell G=(\ell V,\ell E)$, where $(x,i)\sim (y,j)$ (in $\ell G$) if and only if $x\sim y$ (in $G$). It is proved in \cite[Theorem 4]{Sabidussi} that $\ell G=(\ell V, \ell E)$ is the Cayley graph of $\Gamma$ with respect to a {finite}, symmetric, set of generators, in particular, that the graph isomorphism is the map  above. 
    	Of course, $\ell G$ is quasi-isometric to $G$ and hence has still polynomial growth, so that $\Gamma$ is virtually nilpotent by Gromov's Theorem \cite{MR623534}.
Take now $\gamma\in\Gamma$. Using the isomorphism above,
    \begin{equation}\label{isoinv}
        \gamma (x,i)=(\gamma (x), \sigma(\gamma (x))^{-1}\gamma\sigma(x)i)\,\qquad\text{for every $\gamma \in \Gamma$ and $(x,i)\in \ell V$}\,,
    \end{equation}
    where the action on the second coordinate is not important for us. This is to say the group multiplication on $\Gamma\cong \ell V$ corresponds to the action of $\Gamma\le\Aut(G)$ on $V$.

	We now build a Markov process on $\ell G$, whose transition matrix is denoted by $P_{\ell G}$.  We define
	\begin{equation}\notag
		 P_{\ell G}((x,i),(y,j))\defeq 	\frac{P(x,y)}{\ell}		\qquad\text{for every $(x,i),(y,j)\in\ell V$}\,,
	\end{equation}
	so that $\pi_{*}( P_{\ell G}((x,i),\,\cdot\,))=P(x,\,\cdot\,)$ for every $(x,i)\in \ell V$, where $\pi:\ell V\rightarrow V\times \{1\}\simeq V$ denotes  the natural projection. It follows that $\pi_{*}( P_{\ell G}^n((x,i),\,\cdot\,))=P^n(x,\,\cdot\,)$  for every $n\in\mathbb{N}$,
	so that 
	\begin{equation}\notag
		W_1^{\ell G}( P_{\ell G}^n((x,i),\,\cdot\,),P^n(x,\,\cdot\,))\le 2 \qquad\text{for every $(x,i)\in \ell V$ and $n\in\mathbb{N}$}\,,
	\end{equation}	
    where we used $V\simeq V\times\{1\}$.
	Also, it is clear that 
	\begin{equation}\notag
		W_1^{\ell G}(P^n(x,\,\cdot\,),P^n(y,\,\cdot\,))=W_1^{ G}(P^n(x,\,\cdot\,),P^n(y,\,\cdot\,))\le d(x,y)\qquad\text{for every $x,y\in V$ and $n\in\mathbb{N}$}\,,
	\end{equation}
	where we used that $G$ has non-negative Ollivier--Ricci curvature with the argument of Remark \ref{rem:vfdsc}. The two equations above imply that 
	\begin{equation}\label{vefdscvcsc}
		W_1^{\ell G}( P_{\ell G}^n((x,i),\,\cdot\,),P_{\ell G}^n((y,j),\,\cdot\,))\le d((x,i),(y,j))+4\qquad\text{for every $(x,i),(y,j)\in \ell V$ and $n\in\mathbb{N}$}\,.
	\end{equation}
	Moreover, $P_{\ell G}$ is diagonally invariant for $\Gamma$, in the sense that 
	\begin{equation}\notag
		P_{\ell G}( \gamma(x,i),\gamma(y,j))=P_{\ell G}( (x,i),(y,j))\qquad\text{for every $(x,i),(y,j)\in\ell G$ and $\gamma\in\Gamma$}\,,
	\end{equation}
	which follows from \eqref{isoinv}. Finally, it is immediate to see that $P_{\ell G}$ retains the properties of irreducibility, symmetry and laziness and bounded-range interaction satisfied by $P$.

	Hence, we can apply  our main result, Theorem \ref{vecdscs:res}, to conclude that $\Gamma$ is virtually abelian, so that $\ell G$ is quasi-isometric to $\mathbb{Z}^k$ (with some translation invariant metric), for some $k\in\mathbb{N}$. 
\end{proof}

\subsection{Proof of Theorem \ref{notationsimplify}}	
	We first recall that if we have $H\unlhd \Aut(G)$, then we can naturally define the graph $G/H$, whose vertex set is $V/H$ and such that $Hx\sim Hy$ (in $G/H$) if and only if there exist $x'\in Hx,y'\in Hy$ such that $x'\sim y'$ (in $G$).
	As $G$ has polynomial growth by assumption, we can apply Trofimov's Theorem  (\cite{MR735714}, see also \cite{WOESS1991373}), as presented in \cite[Theorem 2.1]{TesseraTointon}, to obtain a subgroup $H\unlhd \Aut(G)$ such that the projection $V\rightarrow V/H$ has finite fibres, $\Aut(G)$ induces a transitive action on $G/H$ corresponding to the virtually nilpotent group $\Aut(G)/H\le\Aut(G/H)$ and $\Aut(G)/H$  has  finite  vertex stabilizers.
	
	The projection of the Markov process on $G$ is a Markov process  on $G/H$, whose transition matrix is 
	\begin{equation}\notag
		P_{G/H}(Hx, Hy)\defeq \sum_{y'\in Hy} P(x',y')\,,
	\end{equation}
	where $x'$ is any element of $Hx$. It is part of the claim above that this is a good definition. Similarly, we have the diagonal invariance 
	\begin{equation}\notag
		P_{G/H}(gHx, gHy)=		P_{G/H}(Hx, Hy)\qquad\text{for every }g\in \Aut(G)\,.
	\end{equation}
 Notice that $P_{G/H}$ retains laziness (as $P_{G/H}(Hx,Hx)\ge 1/2$) and symmetry (i.e., $P_{G/H}(Hx,Hy)=P_{G/H}(Hy,Hx)$), that $P_{G/H}$ is compactly supported (as $P_{G/H}(Hx, Hy)=0$ for all but finitely many cosets $Hy$) and irreducible. 
 Notice finally that, for every $x\sim y\in V$,
	\begin{equation}\notag 
			W^{G/H}_1(P_{G/H}(Hx,\,\cdot\,), P_{G/H}(Hy,\,\cdot\,))\le W^G_1(P(x,\,\cdot\,), P(y,\,\cdot\,))\,
	\end{equation}
as we can take the push-forward through the projection of optimal plans. Using that $G$ has non-negative Ollivier--Ricci curvature, we see that $G/H$, endowed with the diagonally invariant Markov kernel $P_{G/H}$, has non-negative Ollivier--Ricci curvature as well.
Also, as the projection map $V\rightarrow V/H$ is a quasi-isometry, we see that $G$ is quasi-isometric to $G/H$. 

The conclusion follows from Proposition \ref{notationsimplify:res}.\qed

\end{document}